\pdfoutput=1
\RequirePackage{silence}
\WarningFilter{remreset}{The remreset package}
\pdfsuppresswarningpagegroup=1
\documentclass[a4paper,11pt]{amsart}
\synctex=1
\usepackage[hmarginratio={1:1},vmarginratio={1:1},lmargin=70.0pt,tmargin=70.0pt]{geometry}
\usepackage{lmodern}


\usepackage{ifdraft}
\ifdraft{\usepackage[draft]{showkeys}}{\usepackage[final]{showkeys}}
\usepackage{calligra}
\usepackage[T1]{fontenc}
\usepackage[ugly]{nicefrac}
\linespread{1.15}
\usepackage{mathtools}
\mathtoolsset{mathic}
\usepackage[final]{microtype}
\usepackage{multicol}
\usepackage{tabularx,tabu}
\usepackage{latexsym,exscale,enumitem,amsfonts,amssymb,mathtools}
\usepackage{amsmath,amsthm,amsfonts,amssymb,amscd,textcomp,bbm}
\usepackage{stmaryrd}
\SetSymbolFont{stmry}{bold}{U}{stmry}{m}{n}
\usepackage[normalem]{ulem}
\usepackage{thmtools}
\usepackage{fancybox}
\usepackage[table]{xcolor}
\usepackage{mathrsfs}
\DeclareMathAlphabet{\mathscrbf}{OMS}{mdugm}{b}{n}
\DeclareFontEncoding{LS1}{}{}
\DeclareFontSubstitution{LS1}{stix}{m}{n}
\DeclareMathAlphabet{\mathpzc}{LS1}{stixscr}{m}{n}

\usepackage{caption} 
\captionsetup{belowskip=5pt,aboveskip=5pt}

\overfullrule=1mm
\vbadness=10001
\hbadness=10001


\definecolor{mygray}{gray}{0.6}
\definecolor{mygraydark}{gray}{0.4}
\definecolor{mygraylight}{gray}{0.8}

\definecolor{cherry}{RGB}{222,49,99}
\definecolor{cream}{RGB}{255,253,208}
\definecolor{corn}{RGB}{251,236,93}
\definecolor{citron}{RGB}{190,180,90}

\definecolor{spinach}{RGB}{46,139,87}
\definecolor{tomato}{RGB}{255,99,71}
\definecolor{pumpkin}{RGB}{224,180,80}

\definecolor{orchid}{RGB}{143,40,194}
\definecolor{lava}{RGB}{207,16,32}
\definecolor{mydarkblue}{RGB}{10,10,150}

\definecolor{myorange}{RGB}{225,127,0}
\definecolor{mygreen}{RGB}{0,225,0}
\definecolor{mypurple}{RGB}{128,0,128}
\definecolor{myred}{RGB}{255,0,0}
\definecolor{myblue}{RGB}{0,0,195}
\definecolor{myyellow}{RGB}{210,210,0}

\usepackage[all]{xy}
\SelectTips{cm}{}

\usepackage{tikz}
\tikzstyle{densely dotted}=[dash pattern=on \pgflinewidth off .5pt]
\tikzset{anchorbase/.style={baseline={([yshift=-0.5ex]current bounding box.center)}},
tinynodes/.style={font=\tiny, text height=0.25ex, text depth=0.05ex},
smallnodes/.style={font=\scriptsize, text height=0.75ex, text depth=0.15ex},
ssoergel/.style={line width=1.0,color=spinach},
tsoergel/.style={line width=1.0,color=tomato,densely dotted},
usual/.style={line width=1.0,color=black},
JW/.style={line width=1.0,densely dotted,color=black},
pQJW/.style={line width=1.0,densely dashed,color=black,fill=corn!60},
pJW/.style={line width=1.0,color=black,fill=orchid!70},
}
\usetikzlibrary{cd}
\usetikzlibrary{decorations}
\usetikzlibrary{decorations.markings}
\usetikzlibrary{decorations.pathreplacing}
\usetikzlibrary{decorations.pathmorphing}
\usetikzlibrary{arrows.meta,shapes,positioning,matrix,calc}
\usetikzlibrary{shapes.callouts}
\tikzstyle directed=[postaction={decorate,decoration={markings,
mark=at position #1 with {\arrow[line width=0.15mm, black]{>}}}}]
\tikzstyle odirected=[postaction={decorate,decoration={markings,
mark=at position #1 with {\arrow[line width=0.15mm, myorange]{>}}}}]
\tikzstyle smarked=[postaction={decorate,decoration={markings,
mark=at position #1 with {\fill[spinach] (0,0) circle (.065cm);}}}]
\tikzstyle tmarked=[postaction={decorate,decoration={markings,
mark=at position #1 with {\fill[tomato] (0,0) circle (.065cm);}}}]
\tikzstyle wmarked=[postaction={decorate,decoration={markings,
mark=at position #1 with {\fill[white] (0,0) circle (.1cm);}}}]


\allowdisplaybreaks

\newcommand{\ie}{\textsl{i.e.}}
\newcommand{\eg}{\textsl{e.g.}}
\newcommand{\cf}{\textsl{cf.}}
\newcommand{\etc}{\textsl{etc.}}

\newcommand{\muta}{\textsl{mutatis mutandis}}

\renewcommand{\dots}{\text{...}}
\renewcommand{\vdots}{\rotatebox{90}{\text{...}}}


\newcommand{\C}{\mathbb{C}}
\newcommand{\Z}{\mathbb{Z}}

\newcommand{\K}{\mathbb{K}}
\newcommand{\kk}{\mathbbm{k}}
\newcommand{\N}[1][]{\mathbb{N}_{#1}}


\newcommand{\placeholder}{{}_{-}}

\newcommand{\acts}{\centerdot}

\newcommand{\setstuff}[1]{\mathrm{#1}}
\newcommand{\catstuff}[1]{\mathbf{#1}}

\newcommand{\obstuff}[1]{\mathtt{#1}}
\newcommand{\morstuff}[1]{\mathrm{#1}}

\newcommand{\End}{\setstuff{End}}
\newcommand{\Hom}{\setstuff{Hom}}

\newcommand{\aZ}{Z}
\newcommand{\cZ}{\catstuff{Z}}

\newcommand{\prmod}[1]{p\catstuff{Mod}\text{-}{#1}}

\newcommand{\ppar}{\mathsf{p}}
\newcommand{\F}[1][\ppar]{\mathbb{F}_{\ppar}}
\newcommand{\pbase}[2]{[#1]_{#2}}
\newcommand{\eve}{\setstuff{Eve}}
\newcommand{\block}[2]{(#1)_{#2}}

\newcommand{\zigzag}{\obstuff{Z}}
\newcommand{\zigzagc}{\overline{\obstuff{Z}}}
\newcommand{\zigzagy}{\obstuff{Z}^{\prime}}
\newcommand{\idemy}[1][v]{e_{#1}}

\newcommand{\SLtwo}{\mathrm{SL}_{2}(\K)}
\newcommand{\tilt}{\catstuff{Tilt}}

\newcommand{\dist}{\mathsf{d}}

\newcommand{\pjw}[1][v{-}1]{\obstuff{e}_{#1}}

\newcommand{\Up}[1]{\mathrm{U}_{#1}}

\newcommand{\Down}[1]{\mathrm{D}_{#1}}

\newcommand{\loopy}[1]{\mathrm{L}_{#1}}
\newcommand{\loopyy}[1]{\mathrm{L}_{#1}^{\prime}}
\newcommand{\loopdown}[2]{\mathrm{L}^{#1}_{#2}}

\newcommand{\funcf}{\obstuff{f}}
\newcommand{\funcg}{\obstuff{g}}

\newcommand{\funcF}[1][S]{\obstuff{f}_{#1}}
\newcommand{\funcG}[1][S]{\obstuff{g}_{#1}}
\newcommand{\funcH}[1][S]{\obstuff{h}_{#1}}
\newcommand{\hull}[1][S]{\overline{#1}}

\newcommand{\Dset}[1][v]{\setstuff{D}(#1)}
\newcommand{\Cset}[1][v]{(#1)_{\ppar}^{\odot}}

\newcommand{\gtmod}[1]{{G_{#1}T}\text{-}p\catstuff{Mod}}

\newtheorem{theoremm}{Theorem}[section]

\declaretheoremstyle[
headfont=\bfseries, 
notebraces={[}{]},
bodyfont=\normalfont\itshape,
headpunct={},
postheadspace=1em,
spacebelow=10pt,
spaceabove=10pt, 
]{ourtheo}

\declaretheoremstyle[
headfont=\normalfont\bfseries,
notefont=\mdseries,
notebraces={(}{)},
bodyfont=\normalfont\slshape,
headpunct={},
postheadspace=1em,
spacebelow=10pt,
spaceabove=10pt, 
]{ourdef}

\declaretheorem[style=ourtheo,name=Theorem,numberlike=theoremm]{theorem}
\declaretheorem[style=ourtheo,name=Lemma,numberlike=theoremm]{lemma}

\declaretheorem[style=ourtheo,name=Theorem,qed=$\square$,numberlike=theoremm]{theoremqed}
\declaretheorem[style=ourtheo,name=Lemma,qed=$\square$,numberlike=theoremm]{lemmaqed}
\declaretheorem[style=ourtheo,name=Proposition,qed=$\square$,numberlike=theoremm]{propositionqed}
\declaretheorem[style=ourtheo,name=Corollary,qed=$\square$,numberlike=theoremm]{corollary}

\declaretheorem[style=ourdef,name=Definition,numberlike=theorem]{definition}
\declaretheorem[style=ourdef,name=Example,numberlike=theorem]{example}
\declaretheorem[style=ourdef,name=Remark,numberlike=theorem]{remark}

\declaretheorem[style=ourtheo,name=Theorem]{introtheorem}

\allowdisplaybreaks




\setcounter{tocdepth}{1}
\setcounter{secnumdepth}{3}
\numberwithin{equation}{section}
\usepackage[hypertexnames=false]{hyperref}
\usepackage{cleveref,bookmark}
\hypersetup{
pdftoolbar=true,        
pdfmenubar=true,        
pdffitwindow=false,     
pdfstartview={FitH},    
pdftitle={The center of \texorpdfstring{$\mathrm{SL}_{2}$}{SL2} tilting modules},    
pdfauthor={Daniel Tubbenhauer and Paul Wedrich},     
pdfsubject={},   
pdfcreator={Daniel Tubbenhauer and Paul Wedrich},   
pdfproducer={Daniel Tubbenhauer and Paul Wedrich}, 
pdfkeywords={}, 
pdfnewwindow=true,      
colorlinks=true,       
linkcolor=mydarkblue,          
citecolor=teal,        
filecolor=magenta,      
urlcolor=orchid,          
linkbordercolor=lava,
citebordercolor=teal,
urlbordercolor=orchid,  
linktocpage=true
}

\renewcommand{\theequation}{\thesection-\arabic{equation}}
%
%
\let\fullref\autoref
%
\def\makeautorefname#1#2{\expandafter\def\csname#1autorefname\endcsname{#2}}
%
%
\makeautorefname{equation}{Equation}%
\makeautorefname{footnote}{footnote}%
\makeautorefname{item}{item}%
\makeautorefname{figure}{Figure}%
\makeautorefname{table}{Table}%
\makeautorefname{part}{Part}%
\makeautorefname{appendix}{Appendix}%
\makeautorefname{chapter}{Chapter}%
\makeautorefname{section}{Section}%
\makeautorefname{subsection}{Section}%
\makeautorefname{subsubsection}{Section}%
\makeautorefname{paragraph}{Paragraph}%
\makeautorefname{subparagraph}{Paragraph}%
\makeautorefname{theorem}{Theorem}%
\makeautorefname{theo}{Theorem}%
\makeautorefname{thm}{Theorem}%
\makeautorefname{addendum}{Addendum}%
\makeautorefname{addend}{Addendum}%
\makeautorefname{add}{Addendum}%
\makeautorefname{maintheorem}{Main theorem}%
\makeautorefname{mainthm}{Main theorem}%
\makeautorefname{corollary}{Corollary}%
\makeautorefname{corol}{Corollary}%
\makeautorefname{coro}{Corollary}%
\makeautorefname{cor}{Corollary}%
\makeautorefname{lemma}{Lemma}%
\makeautorefname{lemm}{Lemma}%
\makeautorefname{lem}{Lemma}%
\makeautorefname{sublemma}{Sublemma}%
\makeautorefname{sublem}{Sublemma}%
\makeautorefname{subl}{Sublemma}%
\makeautorefname{proposition}{Proposition}%
\makeautorefname{proposit}{Proposition}%
\makeautorefname{propos}{Proposition}%
\makeautorefname{propo}{Proposition}%
\makeautorefname{prop}{Proposition}%
\makeautorefname{property}{Property}
\makeautorefname{proper}{Property}
\makeautorefname{scholium}{Scholium}%
\makeautorefname{step}{Step}%
\makeautorefname{conjecture}{Conjecture}%
\makeautorefname{conject}{Conjecture}%
\makeautorefname{conj}{Conjecture}%
\makeautorefname{question}{Question}
\makeautorefname{questn}{Question}
\makeautorefname{quest}{Question}
\makeautorefname{ques}{Question}
\makeautorefname{qn}{Question}
\makeautorefname{definition}{Definition}%
\makeautorefname{defin}{Definition}%
\makeautorefname{defi}{Definition}%
\makeautorefname{def}{Definition}%
\makeautorefname{dfn}{Definition}%
\makeautorefname{notation}{Notation}
\makeautorefname{nota}{Notation}
\makeautorefname{notn}{Notation}
\makeautorefname{remark}{Remark}%
\makeautorefname{rema}{Remark}%
\makeautorefname{rem}{Remark}%
\makeautorefname{rmk}{Remark}%
\makeautorefname{rk}{Remark}%
\makeautorefname{remarks}{Remarks}%
\makeautorefname{rems}{Remarks}%
\makeautorefname{rmks}{Remarks}%
\makeautorefname{rks}{Remarks}%
\makeautorefname{example}{Example}%
\makeautorefname{examp}{Example}%
\makeautorefname{exmp}{Example}%
\makeautorefname{exam}{Example}%
\makeautorefname{exa}{Example}%
\makeautorefname{algorithm}{Algorithm}%
\makeautorefname{algo}{Algorithm}%
\makeautorefname{alg}{Algorithm}%
\makeautorefname{axiom}{Axiom}%
\makeautorefname{axi}{Axiom}%
\makeautorefname{ax}{Axiom}%
\makeautorefname{case}{Case}%
\makeautorefname{claim}{Claim}%
\makeautorefname{clm}{Claim}%
\makeautorefname{assumption}{Assumption}%
\makeautorefname{assumpt}{Assumption}%
\makeautorefname{conclusion}{Conclusion}%
\makeautorefname{concl}{Conclusion}%
\makeautorefname{conc}{Conclusion}%
\makeautorefname{condition}{Condition}%
\makeautorefname{condit}{Condition}%
\makeautorefname{cond}{Condition}%
\makeautorefname{construction}{Construction}%
\makeautorefname{construct}{Construction}%
\makeautorefname{const}{Construction}%
\makeautorefname{cons}{Construction}%
\makeautorefname{criterion}{Criterion}%
\makeautorefname{criter}{Criterion}%
\makeautorefname{crit}{Criterion}%
\makeautorefname{exercise}{Exercise}%
\makeautorefname{exer}{Exercise}%
\makeautorefname{exe}{Exercise}%
\makeautorefname{problem}{Problem}%
\makeautorefname{problm}{Problem}%
\makeautorefname{probm}{Problem}%
\makeautorefname{prob}{Problem}%
\makeautorefname{solution}{Solution}%
\makeautorefname{soln}{Solution}%
\makeautorefname{sol}{Solution}%
\makeautorefname{summary}{Summary}%
\makeautorefname{summ}{Summary}%
\makeautorefname{sum}{Summary}%
\makeautorefname{operation}{Operation}%
\makeautorefname{oper}{Operation}%
\makeautorefname{observation}{Observation}%
\makeautorefname{observn}{Observation}%
\makeautorefname{obser}{Observation}%
\makeautorefname{obs}{Observation}%
\makeautorefname{ob}{Observation}%
\makeautorefname{convention}{Convention}%
\makeautorefname{convent}{Convention}%
\makeautorefname{conv}{Convention}%
\makeautorefname{cvn}{Convention}%
\makeautorefname{warning}{Warning}%
\makeautorefname{warn}{Warning}%
\makeautorefname{note}{Note}%
\makeautorefname{fact}{Fact}%
\makeautorefname{hope}{Expectation}%
\makeautorefname{hope2}{Conjecture}%

\newcommand{\nnfootnote}[1]{%
\begin{NoHyper}
\renewcommand\thefootnote{}\footnote{#1}%
\addtocounter{footnote}{-1}%
\end{NoHyper}
}

\begin{document}
\title[The center of \texorpdfstring{$\mathrm{SL}_{2}$}{SL2} tilting modules]
{The center of \texorpdfstring{$\mathrm{SL}_{2}$}{SL2} tilting modules}
\author[Daniel Tubbenhauer and Paul Wedrich]{Daniel Tubbenhauer and Paul Wedrich}

\address{D.T.: Institut f{\"u}r Mathematik, Universit{\"a}t Z{\"u}rich, 
Winterthurerstrasse 190, Campus Irchel, Office Y27J32, CH-8057 Z{\"u}rich, 
Switzerland, \href{www.dtubbenhauer.com}{www.dtubbenhauer.com}}
\email{daniel.tubbenhauer@math.uzh.ch}

\address{P.W.: Mathematical Sciences Research Institute,
17 Gauss Way, Berkeley, CA 94720, USA, \href{http://paul.wedrich.at}{paul.wedrich.at}}
\email{p.wedrich@gmail.com}


\nnfootnote{\textit{Mathematics Subject Classification 2010.} Primary: 20G05, 20C20; Secondary: 17B10.}
\nnfootnote{\textit{Keywords.} Modular representation theory, tilting modules, generators and relations, positive characteristic.}

\begin{abstract}
In this note we compute the centers of the categories of tilting modules for
$G=\mathrm{SL}_{2}$ in prime characteristic, of tilting modules for
the corresponding quantum group at a complex root of unity, and of projective $G_{g}T$-modules when $g=1,2$.
\end{abstract}

\maketitle

\renewcommand{\theequation}{\thesection-\arabic{equation}}

\addtocontents{toc}{\protect\setcounter{tocdepth}{1}}

\section{Introduction}\label{section:intro}

Let $\K$ denote an algebraically closed field and $\tilt=\tilt\big(\SLtwo\big)$ the additive 
$\K$-linear category of (left-)tilting modules for the algebraic group $\SLtwo$.

In this note we compute the (categorical) center $\cZ(\tilt)$
of $\tilt$, using the explicit description of the 
Ringel dual of $\SLtwo$ from \cite{TuWe-tilting}. In characteristic zero we have
$\cZ(\tilt)\cong\prod_{\N}\K$ 
since $\tilt$ is semisimple with 
simple objects indexed by $\N$. Hence, our main concern is the case of prime characteristic. 
Thus, for the duration, $\K$ is of characteristic $\ppar\geq 2$.

\begin{introtheorem}\label{theorem:main}
We have isomorphisms of $\K$-algebras
\begin{gather*}
\cZ(\tilt)
\cong
{\K}[X_{v}\mid v\in\N]
\Big/
\langle X_{v}X_{w}\mid v,w\in\N\rangle.
\end{gather*}
\end{introtheorem}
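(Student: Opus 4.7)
The plan is to transfer the computation of $\cZ(\tilt)$ to the Ringel dual side, where \cite{TuWe-tilting} provides an explicit combinatorial model as a zigzag-style category on vertices indexed by $\N$. Since Ringel duality is an additive equivalence, the categorical centers of $\tilt$ and its Ringel dual agree, reducing the problem to computing $\End(\mathrm{id})$ of the combinatorial model, which is a direct calculation with known generators and relations.

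The first step is to recall from \cite{TuWe-tilting} that the endomorphism ring at each vertex $v \in \N$ is the local two-dimensional algebra $\K \oplus \K\cdot\loopy{v}$ with nilpotent generator $\loopy{v}$ squaring to zero, while inter-vertex morphisms are generated by up/down arrows $\up{v}, \down{v}$ subject to explicit zigzag relations. A candidate central element $\eta$ is therefore a family $\eta_v = \lambda_v\cdot\mathrm{id}_v + \mu_v\cdot\loopy{v}$ for $v \in \N$, and I would translate the naturality squares for each generating morphism into linear equations in the $(\lambda_v,\mu_v)$.

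The second step is to solve that system. The key point is that naturality against the inter-vertex arrows $\up{v}, \down{v}$ forces the scalars $\lambda_v$ to coincide to a single global $\lambda \in \K$, providing the unit of the asserted algebra presentation. The nilpotent coefficients $\mu_v$ remain free, giving the generators $X_v := \loopy{v}$. The relations $X_v X_w = 0$ then hold automatically: for $v = w$ this is the square-zero relation $\loopy{v}^2 = 0$, and for $v \neq w$ the product is empty, since $\loopy{v}$ and $\loopy{w}$ lie in disjoint local endomorphism algebras and are not composable as elements of the center.

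The main obstacle will be verifying that naturality truly forces $\lambda_v$ to be uniform across all of $\N$. This hinges on non-degeneracy of the connecting morphisms $\up{v}, \down{v}$ in the Ringel dual of \cite{TuWe-tilting}, which in prime characteristic tie together distinct vertices in a way that the characteristic-zero analogue does not --- explaining the collapse from $\prod_{\N}\K$ in the semisimple case to the much smaller answer here. Carefully tracing through the zigzag relations to confirm this uniformity, and simultaneously ruling out exotic central elements that escape the ansatz $\eta_v = \lambda_v\mathrm{id}_v + \mu_v\loopy{v}$, will be the principal technical content of the argument.
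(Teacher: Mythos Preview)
Your high-level strategy --- pass to the Ringel dual and compute the center there --- is exactly right, and it is what the paper does. However, the concrete model you describe is the quantum (root-of-unity) picture, not the modular one, and this is where the argument breaks.

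First, the endomorphism algebra at a vertex $v$ is \emph{not} two-dimensional in general. By \cite[Lemma~3.23]{TuWe-tilting} (recalled in the paper as \fullref{lemma:dualnumbers}) it is the multi-variable dual-numbers algebra
\[
\K\big[\loopdown{S_{j}}{v{-}1},\dots,\loopdown{S_{0}}{v{-}1}\big]\big/\big\langle(\loopdown{S_{k}}{v{-}1})^{2}\big\rangle,
\]
one generator for each minimal down-admissible stretch of $v$. Its dimension is $2^{\#\{\text{stretches}\}}$, so your ansatz $\eta_{v}=\lambda_{v}\,\mathrm{id}_{v}+\mu_{v}\,\loopy{v}$ misses almost all endomorphisms once $v$ has generation at least $1$.

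Second, a single maximal loop at a single vertex is \emph{not} central, and the coefficients $\mu_{v}$ are \emph{not} free. The paper exhibits (\fullref{example:twelve}) that for $\ppar=3$ no central element can restrict to $\Up{1}\Down{1}\pjw[12]$ at vertex $12$. The genuine central element $\loopdown{}{v}$ is an \emph{infinite} sum: one takes the maximal loop $\loopdown{}{\Dset}\pjw[v{-}1]$ and then adds the corresponding loop at every $w$ in an infinite equivalence class $\Cset$ (a shifted copy of a block). Naturality against the arrows $\Up{k},\Down{k}$ with $k\notin\Dset$ forces the loop coefficients to be constant across $\Cset$, while arrows with $k\in\Dset$ force the loop at $v$ to involve \emph{all} digits in $\Dset$ (otherwise one can push it down to a smaller vertex, contradicting minimality). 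Establishing both facts is the main technical content (\fullref{lemma:tech}, \fullref{lemma:loopscentral}, \fullref{lemma:loopscentral-op}), and it is more delicate than a zigzag computation because adjacent stretches interact via the scalars $\funcF,\funcG,\funcH$.

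What you have sketched is essentially the proof of \fullref{theorem:qgroup}(b), where the Ringel dual really is a type-$A_{\infty}$ zigzag algebra with two-dimensional local algebras and each maximal loop is already central. In prime characteristic the quiver is strictly richer (see \fullref{figure:main}), and the bijection between $\N$ and the basis $\{X_{v}\}$ of the center is not ``vertex $\mapsto$ its loop'' but rather ``vertex $v\mapsto$ infinite sum over $\Cset$''.
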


We will provide an explicit isomorphism in \fullref{theorem:center-category} and
the discussion following it. For a possible interpretation of the central elements $X_v$
via Donkin's tensor product theorem see \fullref{subsection:tensor}.

Finally, in \fullref{section:othercases} 
we compute the centers of the categories of tilting modules 
in the quantum group case, see \fullref{theorem:qgroup}, 
and of projective $G_{g}T$-modules for $g=1,2$, 
see \fullref{theorem:grt}, both for $\mathrm{SL}_{2}$.

\medskip

\noindent\textbf{Acknowledgments.}
We like to thank Geordie Williamson for bringing 
the question about the center of tilting modules to our attention, and for helpful exchanges of emails.
We also thank Henning Haahr Andersen
and Catharina Stroppel for valuable email discussions and comments, and a referee for comments on a draft of this paper.

D.T. was sponsored by a chair in a random hotel during this project. P.W. was
supported by the National Science Foundation under Grant No. DMS-1440140, while
in residence at the Mathematical Sciences Research Institute in Berkeley,
California, during the Spring 2020 semester. 

\section{Preliminaries}\label{section:basics}

Throughout, we fix a prime $\ppar\geq 2$.
The main figure to keep in mind is \fullref{figure:main} 
which illustrates the underlying graph of the quiver algebra 
$\zigzag$ in the cases $\ppar=3$, $\ppar=5$ and $\ppar=7$.

\begin{figure}[ht]
\includegraphics[scale=.265]{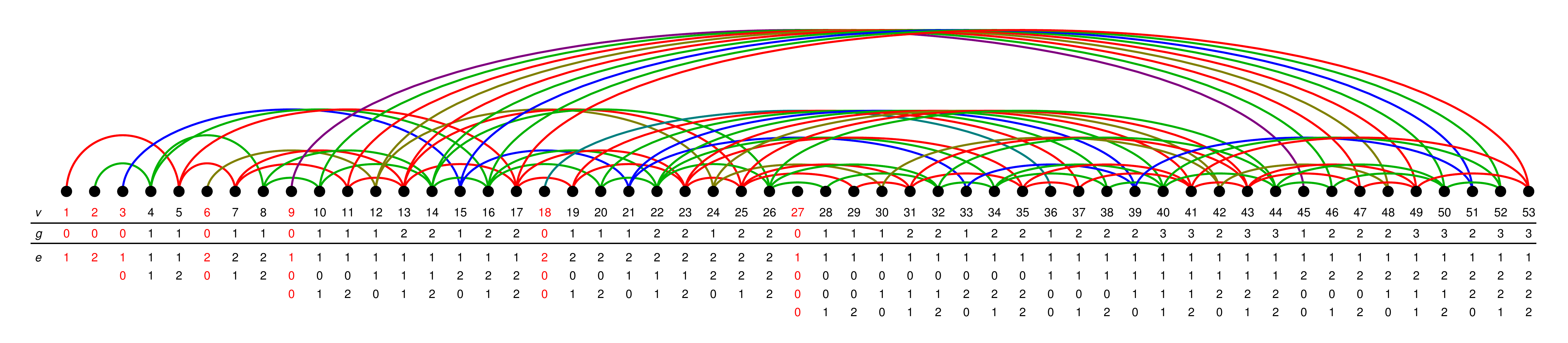}
\\
\includegraphics[scale=.265]{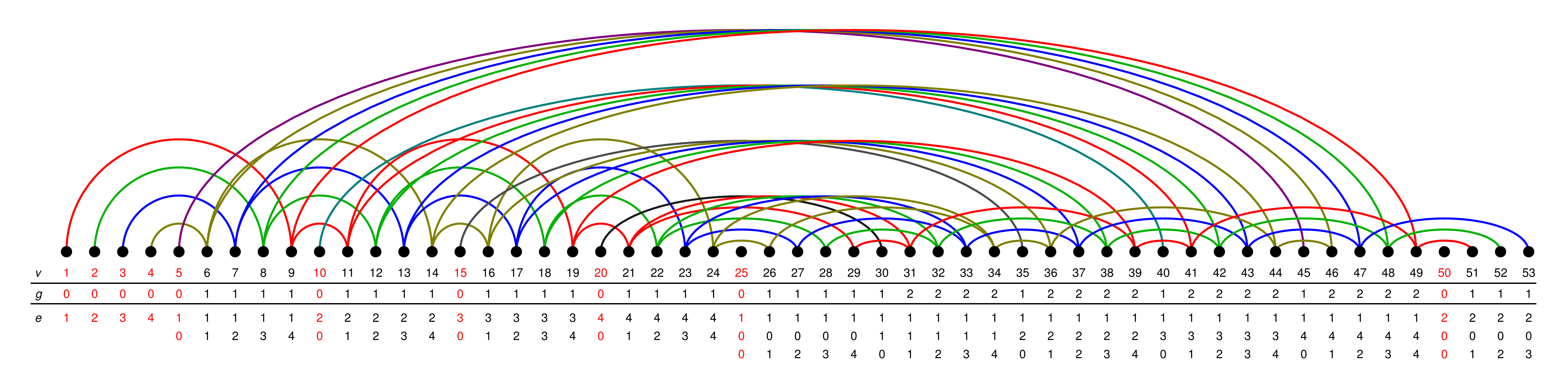}
\\
\includegraphics[scale=.265]{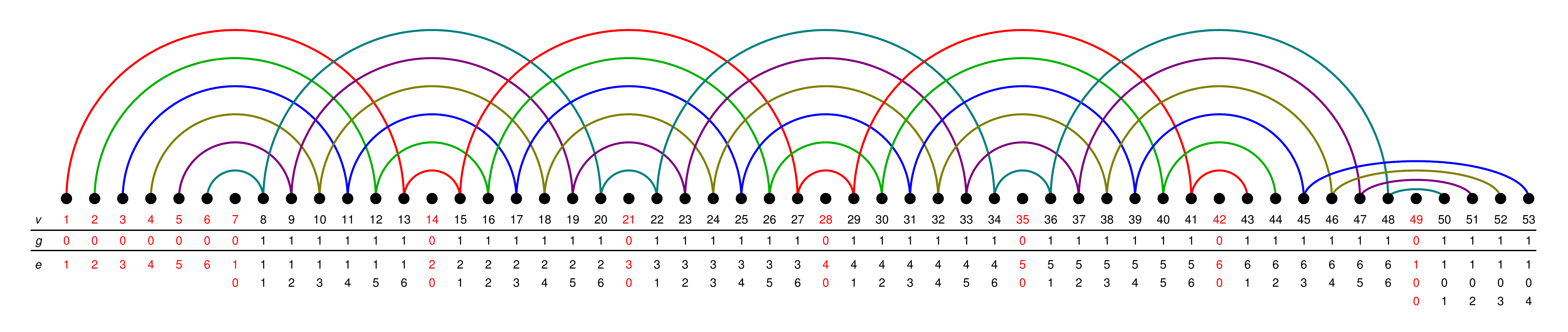}
\caption{The full subquivers containing the first $53$ vertices of the quiver
underlying $\zigzag_{\ppar}$ for $\ppar\in\{3,5,7\}$, showing from top to bottom
the numbers $v$ of the vertices, the generation of $v$ and the $\ppar$-adic
expansion of $v$.}
\label{figure:main}
\end{figure}

We also warn the reader that we will always have a $\rho$-shift of $1$ and the
crucial number will usually be ``the highest weight plus $1$'', which we will
denote by $v$, $w$ {\etc}

In the following, we review basic notation related to $\zigzag$. For
more details, we refer to \cite{TuWe-tilting}.

\subsection{Some combinatorics of \texorpdfstring{$\ppar$}{p}-adic expansions}\label{subsection:basics}

\begin{definition}
For any $v\in\N$ we write $\pbase{a_{j},\dots,a_{0}}{\ppar}:=\sum_{i=0}^{j}a_{i}
\ppar^{i}=v$ for the $\ppar$-adic expansion with digits $a_{i}\in
\{0,\dots,\ppar-1\}$ and $a_{j}\neq 0$. 
We sometimes also write $a_{h}$ for $h>j$, and then 
$a_{h}=0$, by convention.
Finally, 
the generation of $v$ is the number of non-zero digits of
$v$ minus $1$.
\end{definition}

\begin{remark} 
Conversely, we will sometimes specify numbers by $\ppar$-adic
expansions with negative digits such as $\pbase{3,-1,-6,-5,0,5,-6}{7}=320048$.
\end{remark}

The generation of $v$ is a complexity measure for the indecomposable
tilting module of highest weight $v-1$. For example the indecomposables of
generation zero are exactly the simple tilting modules. Moreover, to each
non-simple indecomposable tilting module one associates another tilting
module of generation one lower, called its mother, whose highest weight
(plus 1) is obtained by setting the lowest non-zero digit to zero. Tracing
the matrilinear ancestry of an indecomposable tilting module through
decreasing generation numbers, one arrives at a simple tilting module,
called an eve.

\begin{definition}
If $v=\pbase{a_{j},\dots,a_{0}}{\ppar}\in\N$ is of generation zero, then $v$ is
called an eve. The set of eves is denoted by $\eve$. 
\end{definition}

Note that $\eve=\eve^{<\ppar}\cup\eve^{\geq\ppar}$, with 
$\eve^{<\ppar}$ and $\eve^{\geq\ppar}$ having the evident meaning.

\begin{example}
For $\ppar=7$ we have 
$\eve^{<\ppar}=\{1,2,3,4,5,6\}$ 
and 
\begin{gather*}
\eve^{\geq\ppar}=\{\pbase{1,0}{7},\pbase{2,0}{7},\dots,\pbase{6,0}{7},\pbase{1,0,0}{7},\pbase{2,0,0}{7},\dots,\pbase{6,0,0}{7},\pbase{1,0,0,0}{7},\dots\}.
\end{gather*}
\end{example}

\begin{definition}
For two finite subsets $S,T\subset\N[0]$ the distance between them is defined as
$\dist(S,T)=\min\big\{|s-t|\mid s\in S,t\in T\big\}$, and we write $T>S$ to
indicate the requirement that every element in $T$ is strictly greater than
every element in $S$.
\end{definition}

\begin{definition}
For $S\subset\N[0]$ a finite set, we consider partitions $S=\bigsqcup_{i}S_{i}$ of
$S$ into subsets $S_{i}$ of consecutive integers, which we call stretches. We fix the coarsest such partition.

The set $S$ is called down-admissible for $v=\pbase{a_{j},\dots,a_{0}}{\ppar}$ if:
\begin{enumerate}[label=(\roman*)]

\setlength\itemsep{0.15cm}

\item $a_{\min(S_{i})}\neq 0$ for every $i$, and

\item if $s\in S$ and $a_{s+1}=0$, then $s+1\in S$.
\end{enumerate}
If $S\subset\N[0]$ is down-admissible for $v=\pbase{a_{j},\dots,a_{0}}{\ppar}$, then we define its 
downward reflection along $S$ as
\begin{gather*}
v[S]:=\pbase{a_{j},
\epsilon_{j-1}a_{j-1},\dots,\epsilon_{0}\,a_{0}}{\ppar},\quad
\epsilon_{k}
=
\begin{cases}
1 &\text{if }k\notin S,
\\
-1 &\text{if }k\in S.
\end{cases}
\end{gather*}

Conversely, $S$ is 
up-admissible for $v=\pbase{a_{j},\dots,a_{0}}{\ppar}$ if the following conditions are satisfied:
\begin{enumerate}[label=(\roman*)]

\setlength\itemsep{0.15cm}

\item $a_{\min(S_{i})}\neq 0$ for every $i$, and

\item if $s\in S$ and $a_{s{+}1}=\ppar-1$, then we also have $s+1\in S$.

\end{enumerate}
If $S\subset\N[0]$ is up-admissible for $v=\pbase{a_{j},\dots,a_{0}}{\ppar}$, then we define its 
upward reflection along $S$ as
\begin{gather*}
v(S):=
\pbase{a_{r(S)}^{\prime},\dots,a_{0}^{\prime}}{\ppar},
\quad
a_{k}^{\prime}= 
\begin{cases}
a_{k} &\text{if }k\notin S,k-1\notin S,
\\
a_{k}+2 &\text{if }k\notin S,k-1\in S,
\\
-a_{k} &\text{if }k\in S,
\end{cases}
\end{gather*}
where we extend the digits of $v$ by $a_{h}=0$ for $h>j$ if necessary, and
$r(S)$ is the biggest integer such that $a_{k}^{\prime}\neq 0$.
\end{definition}

\begin{example}
Note that $S=\{7,6\}$ is up-admissible for $v=\pbase{3,1,6,5,0,5,6}{7}$. To
compute $v(S)$ we let $v=\pbase{0,3,1,6,5,0,5,6}{7}$ and then we get
\begin{gather*}
v(S)=
\pbase{\uwave{0,3},1,6,5,0,5,6}{7}
=
\pbase{2,-3,1,6,5,0,5,6}{7}
=
\pbase{1,4,1,6,5,0,5,6}{7}.
\end{gather*}
The wave indicates the digits on which we apply $S$.
\end{example}

We tend to omit set brackets, {\eg}
for singleton sets $\{i\}$ we also write 
$v[i]$ and $v(i)$ instead of $v[\{i\}]$ and $v(\{i\})$.

\begin{definition}
If $S$ is up-admissible, then we denote by $\hull\subset\N[0]$ the
down-admissible hull of $S$, the smallest down-admissible set containing $S$, if it exists.
\end{definition}

\begin{example}\label{example:downad}
Let $\ppar=7$ and $v=\pbase{3,1,6,5,0,5,6}{7}$.

\begin{enumerate}[label=(\alph*)]

\setlength\itemsep{0.15cm}

\item The singleton sets $\{2\}$, $\{1\}$ and $\{i\}$ for $i\in\N[>5]$
are not down-admissible for $v$. Of these only $\{1\}$ has a down-admissible
hull and it is $\hull[{\{1\}}]=\{2,1\}$.

\item Hence, the singleton sets $\{5\}$, $\{4\}$, $\{3\}$ and 
$\{0\}$ are minimal down-admissible for $v$, 
and we have
\begin{gather*}
v[5]=\pbase{3,-1,6,5,0,5,6}{7}
,\quad
v[4]=\pbase{3,1,-6,5,0,5,6}{7}
,\\
v[3]=\pbase{3,1,6,-5,0,5,6}{7}
,\quad
v[0]=\pbase{3,1,6,5,0,5,-6}{7}.
\end{gather*}
\emph{Toghether with $\{2,1\}$, these are 
all minimal down-admissible sets for $v$}.

\item The singleton sets which are up-admissible 
for $v$ are $\{6\}$, $\{5\}$, $\{4\}$, $\{1\}$ and 
$\{0\}$.

\item The set $S=\{5,4,3|0\}$ is down- and up-admissible for $v$, and $v(S)$
and $v[S]$ can be illustrated via
\begin{gather*}
v[5,4,3|0]=\pbase{3,\underline{1,6,5},0,5,\underline{6\!\!\phantom{,}}}{7}=\pbase{3,-1,-6,-5,0,5,-6}{7},
\\
v(5,4,3|0)=\pbase{3,\uwave{1,6,5},0,5,\uwave{6}}{7}=\pbase{5,-1,-6,-5,0,7,-6}{7}.
\end{gather*}

\end{enumerate}
We have marked the digits where we reflect in the set $S$, either \underline{down} or \uwave{up}.
\end{example}

\subsection{The Ringel dual}\label{subsection:dual}

Let $\K=\overline{\K}$ denote a field of 
characteristic $\ppar$ with prime field $\F$.

\begin{definition}
We define two functions $\funcf,\funcg\colon\F\to\F$ via
\begin{gather*}
\funcf(a)=
\begin{cases} 
(-1)^{a}\tfrac{2}{a}
&\text{if }1\leq a\leq\ppar-2,
\\
0 
&\text{if }a=0\text{ or }a=\ppar-1,
\end{cases}
\quad
\funcg(a)=
\begin{cases}
-(\tfrac{a+1}{a})
&\text{if }1\leq a\leq\ppar-1,
\\
-2 &\text{if }a=0.
\end{cases}
\end{gather*}
\end{definition}

Note that $\funcf(\ppar-1)=\funcg(\ppar-1)=0$ and $\funcg(a)=\funcg(\ppar-a-1)^{-1}$ for $a\neq 0,\ppar-1$. 

Let $\tilt$ denote the category of 
(left) tilting modules of $\SLtwo$, see {\eg} \cite[Section 1]{Wi-algebraic-sheaves} for a concise summary of 
the main definitions and properties regarding $\tilt$.
Let $\obstuff{T}(v-1)$ denote (a choice of representative of) the indecomposable tilting module 
of highest weight $v-1$.

\begin{definition}
Define a $\K$-algebra
\begin{gather*}
\zigzag:=
{\textstyle\bigoplus_{v,w\in\N}} \Hom_{\tilt}\big(\obstuff{T}(v-1),\obstuff{T}(w-1)\big).
\end{gather*}
\end{definition}

Let $\pjw[v{-}1]$ be the idempotent in $\zigzag$ corresponding to $\obstuff{T}(v-1)$.

\begin{definition}
For each finite $S\subset\N[0]$ we define scaling 
operators $\funcF,\funcG,\funcH\in\zigzag$ on $v=\pbase{a_{j},\dots,a_{0}}{\ppar}$ as
\begin{gather*}
\funcF\pjw[v{-}1] 
=
\funcf(a_{\max(S)+1})\pjw[v{-}1], 
\quad
\funcG\pjw[v{-}1] 
= 
\funcg(a_{\max(S)+1})\pjw[v{-}1],
\quad
\funcH\pjw[v{-}1] 
=
\funcg(a_{\max(S)+1}-1)\pjw[v{-}1].
\end{gather*}
\end{definition}

\begin{example}
Let again $\ppar=7$, $v=\pbase{3,1,6,5,0,5,6}{7}$ and  $S=\{5,4,3|0\}$. Then
\begin{gather*}
\funcF\pjw[v{-}1] 
=
\funcf(3)\pjw[v{-}1]=4\pjw[v{-}1], 
\quad
\funcG\pjw[v{-}1] 
= 
\funcg(3)\pjw[v{-}1]=\pjw[v{-}1],
\quad
\funcH\pjw[v{-}1] 
=
\funcg(2)\pjw[v{-}1]=2\pjw[v{-}1].
\end{gather*}
\end{example}

The following identifies $\zigzag$ explicitly, and can be taken as an abstract
definition of a quiver algebra isomorphic to $\zigzag$, see also
\fullref{remark:quiver} below.

\begin{theoremqed}(See \cite[Theorem 3.2]{TuWe-tilting}.)\label{theorem:main-tl-section}
The algebra $\zigzag$ is generated by $\pjw[v{-}1]$ 
for $v\in\N$, 
and elements $\Down{S}\pjw[v{-}1]$ and $\Up{S^{\prime}}\pjw[v{-}1]$, 
where $S$ and $S^{\prime}$ denote minimal down- and up-admissible stretches for $v$, respectively.
These generators are subject to the following complete set of relations.
\begin{enumerate}[label=(\arabic*)]

\setlength\itemsep{0.15cm}

\item \emph{Idempotents.}
\begin{gather*}
\pjw[v{-}1]\pjw[w{-}1]=\delta_{v,w}\pjw[v{-}1],
\quad 
\pjw[{v[S]{-}1}]\Down{S}\pjw[v{-}1]
=
\Down{S}\pjw[v{-}1],
\quad
\pjw[v(S^{\prime}){-}1]\Up{S^{\prime}}\pjw[v{-}1]
=
\Up{S^{\prime}}\pjw[v{-}1].
\end{gather*}

\item \emph{Containment.}
If $S^{\prime} \subset S$, then we have
\begin{gather*}
\Down{S^{\prime}}\Down{S}\pjw[v{-}1]=0,
\quad
\Up{S}\Up{S^{\prime}}\pjw[v{-}1]=0.
\end{gather*}

\item \emph{Far-commutativity.}
If $\dist(S,S^{\prime})>1$, then 
\begin{gather*}
\Down{S}\Down{S^{\prime}}\pjw[v{-}1]=\Down{S^{\prime}}\Down{S}\pjw[v{-}1],
\quad
\Down{S}\Up{S^{\prime}}\pjw[v{-}1]=\Up{S^{\prime}}\Down{S}\pjw[v{-}1],
\quad
\Up{S}\Up{S^{\prime}}\pjw[v{-}1]=\Up{S^{\prime}}\Up{S}\pjw[v{-}1].
\end{gather*}

\item \emph{Adjacency relations.}
If $\dist(S,S^{\prime})=1$ and $S^{\prime}>S$, then
\begin{gather*}
\Down{S^{\prime}}\Up{S}\pjw[v{-}1]=\Down{S{\cup}S^{\prime}}\pjw[v{-}1],
\quad
\Down{S}\Up{S^{\prime}}\pjw[v{-}1]=\Up{S^{\prime}{\cup}S}\pjw[v{-}1],
\\
\Down{S^{\prime}}\Down{S}\pjw[v{-}1]=\Up{S}\Down{S^{\prime}}\funcH[S]\pjw[v{-}1],
\quad
\Up{S}\Up{S^{\prime}}\pjw[v{-}1]=\funcH[S]\Up{S^{\prime}}\Down{S}\pjw[v{-}1].
\end{gather*}

\item \emph{Overlap relations.}
If $S^{\prime}\geq S$ with $S^{\prime}\cap S=\{s\}$ and $S^{\prime}\not\subset S$, then we have
\begin{gather*}
\Down{S^{\prime}}\Down{S}\pjw[v{-}1]=\Up{\{s\}}\Down{S}\Down{S^{\prime}{\setminus}\{s\}}\pjw[v{-}1],
\quad
\Up{S}\Up{S^{\prime}}\pjw[v{-}1]=\Up{S^{\prime}{\setminus}\{s\}}\Up{S}\Down{\{s\}}\pjw[v{-}1].
\end{gather*}

\item \emph{Zigzag.}
\begin{gather*}
\Down{S}\Up{S}\pjw[v{-}1]=\Up{\hull[S]}\Down{\hull[S]}\funcG[S]\pjw[v{-}1] 
+\Up{T}\Up{\hull[S]}\Down{\hull[S]}\Down{T}\funcF[S]\pjw[v{-}1].
\end{gather*}
Here, if the down-admissible hull $\hull[S]$, or the smallest minimal
down-admissible stretch $T$ with $T>\hull[S]$ does not exist, then the involved
symbols are zero by definition.

\end{enumerate}

The elements of the form
\begin{gather*}
\tag{Basis}\qquad\pjw[w{-}1]\Up{S_{i_{l}}^{\prime}}
\cdots\Up{S_{i_{0}}^{\prime}}\Down{S_{i_{0}}}\cdots\Down{S_{i_{k}}}\pjw[v{-}1],
\end{gather*}
\hspace{1cm} with $S_{i_{l}}^{\prime}>\cdots>S_{i_{0}}^{\prime}$, and $S_{i_{0}}<\cdots<S_{i_{k}}$, 
form a basis for $\pjw[w{-}1]\zigzag\pjw[v{-}1]$. 

Finally, any word $\pjw[w{-}1]\morstuff{F}\pjw[v{-}1]$ in the generators 
of $\zigzag$ expands in \emph{(Basis)} via \emph{(1)--(6)}.
\end{theoremqed}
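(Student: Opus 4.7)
The plan is to establish this theorem in two stages: first, construct candidate generators inside $\tilt$ and verify the relations, and then prove completeness of the proposed basis by dimension counting.

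For the first stage, I would begin with the structure of the indecomposable tilting modules $\obstuff{T}(v-1)$ governed by the linkage principle and Andersen's translation functors. For an eve $v \in \eve$, the module $\obstuff{T}(v-1)$ is simple (hence also a Weyl module); for non-eve $v$, $\obstuff{T}(v-1)$ has a Weyl filtration whose subquotients are indexed by the reflection orbit of $v-1$ under the (dot-shifted) affine Weyl group modulo the digits of the $\ppar$-adic expansion. Then $\Down{S}\pjw[v{-}1]$ is defined as the canonical (up to scalar) projection from $\obstuff{T}(v-1)$ onto the summand $\obstuff{T}(v[S]-1)$ appearing after a sequence of translations through the relevant walls, and $\Up{S^{\prime}}\pjw[v{-}1]$ as the corresponding section for the up-reflection. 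Relations (1)--(4) (idempotents, containment, far-commutativity and adjacency) are then essentially formal consequences of the translation functor formalism together with the disjointness of the stretches involved. The overlap relation (5) and especially the zigzag relation (6) encode the interaction of overlapping up/down moves on adjacent digits; the scalars $\funcf$, $\funcg$, $\funch$ arise from a ``bubble'' computation in a diagrammatic model of $\tilt$ in which $\pjw[v-1]$ is realized as a $\ppar$-Jones-Wenzl projector.

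For the second stage, I would show that the stated words span each $\pjw[w{-}1]\zigzag\pjw[v{-}1]$ by giving a normal-form procedure: relations (3)--(6) allow one to push every $\Up{\cdot}$ to the left and every $\Down{\cdot}$ to the right with stretch indices arranged in the required ordering, and (2) eliminates nested stretches of the same type. Linear independence, and hence that (Basis) really is a basis, would then follow from matching this count to the known dimensions of $\Hom$-spaces between indecomposable tilting modules --- these can be extracted from $\ppar$-canonical basis data for the affine Weyl group of type $\widetilde{A}_{1}$, or equivalently from the recursive description of $\obstuff{T}(v-1)$ via the mother/eve hierarchy together with Donkin's tensor product theorem.

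The main obstacle is pinning down the precise scalars in the zigzag relation (6). The functions $\funcf$, $\funcg$ and $\funch$ reflect the arithmetic of carries in $\ppar$-adic addition that occur when an up-move at a stretch $S$ is followed by a down-move back, and isolating them rigorously requires either a careful diagrammatic evaluation in the $\ppar$-Jones-Wenzl calculus or an inductive wall-crossing computation; in either case the delicate bookkeeping of the digits $a_{\max(S)+1}$ and $a_{\max(S)+1}-1$ entering the definitions of $\funcF, \funcG, \funcH$ is what makes this step substantial.
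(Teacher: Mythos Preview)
The paper does not contain a proof of this theorem: it is stated in a \texttt{theoremqed} environment with the citation ``See \cite[Theorem 3.2]{TuWe-tilting}'', meaning the result is imported wholesale from the companion paper and the qed box is placed immediately after the statement. There is therefore no proof in the present paper against which to compare your proposal.

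That said, your sketch is broadly in the spirit of how \cite{TuWe-tilting} proceeds. The actual argument there is carried out diagrammatically in the Temperley--Lieb category: the idempotents $\pjw[v{-}1]$ are constructed as explicit $\ppar$-Jones--Wenzl projectors, the generators $\Down{S}$ and $\Up{S}$ are realised as concrete cap/cup morphisms between these projectors, and relations (1)--(6) are verified by direct diagrammatic computation, with the scalars $\funcf,\funcg,\funch$ emerging from bubble evaluations exactly as you anticipate. Your first stage via translation functors is a legitimate alternative viewpoint but is not the route taken; the second stage (normal form plus dimension count against known tilting Hom-dimensions) is essentially what is done. The ``main obstacle'' you identify --- pinning down the zigzag scalars --- is indeed where the bulk of the work in \cite{TuWe-tilting} lies.
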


\begin{remark}\label{remark:quiver}
In fact, as already mentioned above, we could alternatively define $\zigzag$ as
the quiver algebra with underlying graphs as in \fullref{figure:main}, using
highest weights to index the vertices, and two arrows directed in opposite
directions for each edge in this graph. The elements $\pjw[v{-}1]$ are the
vertex idempotents of this quiver algebra, and the elements
$\Down{\placeholder}$, called down arrows, $\Up{\placeholder}$, called up
arrows, are the arrows in this quiver, pointing left and right, respectively.
\end{remark}

\begin{remark}
Note that all appearing scalars in the presentation of 
$\zigzag$ in \fullref{theorem:main-tl-section} are from $\F$ rather than $\K$. 
\end{remark}

\begin{remark}
In \fullref{theorem:main-tl-section}.(4) and (6), the right-hand sides of the shown relations feature 
morphisms indexed by admissible subsets that are not necessarily minimal. These morphisms are defined to be
\begin{gather}\label{eq:more-generators}
\Down{S}\pjw[v{-}1]
:=
\Down{S_{i_{1}}}\cdots \Down{S_{i_{k}}}\pjw[v{-}1],
\quad
\Up{S^{\prime}}\pjw[v{-}1]
:=
\Up{S^{\prime}_{i_{l}}}\cdots \Up{S^{\prime}_{i_{1}}}\pjw[v{-}1],
\end{gather}
where the products are taken over the minimal down- respectively up-admissible stretches $S_{i_{j}}$ and 
$S^{\prime}_{i_{j}}$, 
such that $S=\bigsqcup_{j} S_{i_{j}}$ and $S^{\prime}=\bigsqcup_{j} S^{\prime}_{i_{j}}$, with $S_{i_{1}}<\cdots<S_{i_{k}}$ and 
$S^{\prime}_{i_{l}}>\cdots>S^{\prime}_{i_{1}}$. 
\end{remark}

\begin{example}\label{example:quiver}
To be completely explicit with respect to \fullref{remark:quiver}: if $\ppar=3$,
then the quiver of the idempotent truncation of $\zigzag$ supported on the vertices
$0$, $4$, $6$, $10$, $12$ and $16$ would be 
\begin{gather*}
\begin{tikzcd}[ampersand replacement=\&,column sep=3em]  
0
\ar[r,"\Up{\{0\}}",black,yshift=0.1cm]
\&
4
\ar[r,"\Up{\{0\}}",black,yshift=0.1cm]
\ar[l,"\Down{\{0\}}",orchid,yshift=-0.1cm]
\ar[rrrr,"\Up{\{1\}}",spinach,yshift=0.45cm,bend left=30]
\&
6
\ar[r,"\Up{\{0\}}",black,yshift=0.1cm]
\ar[l,"\Down{\{0\}}",orchid,yshift=-0.1cm]
\ar[rr,"\Up{\{1\}}",spinach,yshift=-0.25cm,bend right=30]
\&
10
\ar[r,"\Up{\{0\}}",black,yshift=0.1cm]
\ar[l,"\Down{\{0\}}",orchid,yshift=-0.1cm]
\&
12
\ar[r,"\Up{\{0\}}",black,yshift=0.1cm]
\ar[l,"\Down{\{0\}}",orchid,yshift=-0.1cm]
\ar[ll,"\Down{\{1\}}",tomato,yshift=-0.45cm,bend left=30]
\&
16
\ar[l,"\Down{\{0\}}",orchid,yshift=-0.1cm]
\ar[llll,"\Down{\{1\}}",tomato,yshift=0.25cm,bend right=30]
\end{tikzcd}
.
\end{gather*}
(The colors, here and below, are only to ease readability.)
Here the labeling is coming from {\eg}
\begin{gather*}
17=\pbase{1,2,2}{3},
\quad
17[0]=\pbase{1,2,\underline{2}}{3}
=\pbase{1,2,-2}{3}=13,
\quad
17[1]=\pbase{1,\underline{2},2}{3}
=\pbase{1,-2,2}{3}=5,
\end{gather*}
which give the two downward arrows from $16$, 
namely $\Down{\{0\}}\pjw[16]=\pjw[12]\Down{\{0\}}$ 
and $\Down{\{1\}}\pjw[16]=\pjw[4]\Down{\{1\}}$.
We also have 
$\Down{\{1,0\}}\pjw[16]=\Down{\{0\}}\Down{\{1\}}\pjw[16]$.
\end{example}

\begin{definition}
For any down-admissible set $S$ for $v$ we define the loop
\begin{gather*}
\loopdown{S}{v{-}1}:=\Up{S}\Down{S}\pjw[v{-}1].
\end{gather*}
\end{definition}

\begin{lemmaqed}(See \cite[Lemma 3.23]{TuWe-tilting}.)
\label{lemma:dualnumbers} 
Let $v\in\N$ with minimal 
down-admissible stretches $S_{j},\dots,S_{0}$. Then we have the $\K$-algebra isomorphism
\begin{gather*}
\End_{\tilt}\big(\obstuff{T}(v-1)\big)
\cong
\K\big[\loopdown{S_{j}}{v{-}1},\dots,\loopdown{S_{0}}{v{-}1}\big]
\Big/\big\langle(\loopdown{S_{j}}{v{-}1})^{2},\dots, (\loopdown{S_{0}}{v{-}1})^{2}\big\rangle,
\end{gather*}
and if $S$ is down-admissible for $v$, then $\loopdown{S}{v{-}1}=\prod_{k\mid S_{k}{\subset}S}\loopdown{S_{k}}{v{-}1}$.
\end{lemmaqed}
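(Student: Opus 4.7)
The plan is to compute $\End_{\tilt}\big(\obstuff{T}(v-1)\big)$ by identifying it with the corner algebra $\pjw[v{-}1]\zigzag\pjw[v{-}1]$ and then using the basis from \fullref{theorem:main-tl-section} together with the six classes of relations.

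First, I would enumerate a basis of $\pjw[v{-}1]\zigzag\pjw[v{-}1]$ using \emph{(Basis)}. A basis element $\pjw[v{-}1]\Up{S_{i_{l}}^{\prime}}\cdots\Up{S_{i_{0}}^{\prime}}\Down{S_{i_{0}}}\cdots\Down{S_{i_{k}}}\pjw[v{-}1]$ starts and ends at the vertex labelled $v-1$, so the down stretches $S_{i_{0}}<\cdots<S_{i_{k}}$ must jointly be down-admissible for $v$ with union $S=\bigsqcup S_{i_{r}}$, and the up stretches $S^{\prime}_{i_{l}}>\cdots>S^{\prime}_{i_{0}}$ must take $v[S]$ back to $v$. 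Inspecting the effect of the upward reflections on digits shows that this forces the ups and downs to occupy exactly the same positions, so each such basis element is of the form $\Up{S}\Down{S}\pjw[v{-}1]=\loopdown{S}{v{-}1}$ for some down-admissible $S$ for $v$. Thus
\begin{gather*}
\End_{\tilt}\big(\obstuff{T}(v-1)\big)=\bigoplus_{S\text{ down-adm.\ for }v}\K\cdot\loopdown{S}{v{-}1}.
\end{gather*}

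Next I would prove the multiplicative identity $\loopdown{S}{v{-}1}=\prod_{k\mid S_{k}\subset S}\loopdown{S_{k}}{v{-}1}$. Writing $S=S_{j_{1}}\sqcup\cdots\sqcup S_{j_{m}}$ with $S_{j_{1}}<\cdots<S_{j_{m}}$ in terms of minimal down-admissible stretches, \fullref{eq:more-generators} gives $\Down{S}=\Down{S_{j_{1}}}\cdots\Down{S_{j_{m}}}$ and $\Up{S}=\Up{S_{j_{m}}}\cdots\Up{S_{j_{1}}}$. The task is to reorder the product $\Up{S_{j_{m}}}\cdots\Up{S_{j_{1}}}\Down{S_{j_{1}}}\cdots\Down{S_{j_{m}}}\pjw[v{-}1]$ into the fully loop-factorised form $\Up{S_{j_{1}}}\Down{S_{j_{1}}}\cdots\Up{S_{j_{m}}}\Down{S_{j_{m}}}\pjw[v{-}1]$. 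For distant stretches this is immediate from far-commutativity; for adjacent or overlapping stretches one uses the adjacency and overlap relations to shuffle ups past downs, noting that the "error terms" produced land outside the corner algebra (they would move the endpoint) and hence are zero after idempotent truncation. The structure of down-admissibility (the coarsest partition into stretches) guarantees that all the bookkeeping of $\hull[-]$ is consistent.

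For the remaining relations, the containment relation gives $\Down{S_{k}}\Down{S_{k}}\pjw[v{-}1]=0$, which immediately implies $(\loopdown{S_{k}}{v{-}1})^{2}=\Up{S_{k}}(\Down{S_{k}}\Up{S_{k}})\Down{S_{k}}\pjw[v{-}1]=0$ after expanding the inner $\Down{S_{k}}\Up{S_{k}}$ via zigzag: for minimal $S_{k}$ one has $\hull[S_{k}]=S_{k}$, so both summands of zigzag produce factors of the form $\Down{S_{k}}\Down{S_{k}}$ which vanish by containment. Commutativity $\loopdown{S_{k}}{v{-}1}\loopdown{S_{k'}}{v{-}1}=\loopdown{S_{k'}}{v{-}1}\loopdown{S_{k}}{v{-}1}$ for distinct $k,k'$ follows from the same shuffling argument used for the factorisation. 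This yields a well-defined surjective $\K$-algebra map from $\K[\loopdown{S_{j}}{v{-}1},\dots,\loopdown{S_{0}}{v{-}1}]/\langle(\loopdown{S_{i}}{v{-}1})^{2}\rangle$ onto $\End_{\tilt}\big(\obstuff{T}(v-1)\big)$, and comparing dimensions---the domain has dimension $2^{j+1}$, matching the count of subsets of $\{S_{j},\dots,S_{0}\}$, i.e.\ of down-admissible subsets of the minimal ones---shows it is an isomorphism.

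The main obstacle I anticipate is the bookkeeping in the shuffling step: adjacent minimal down-admissible stretches do \emph{not} satisfy far-commutativity (as the example with $v=\pbase{3,1,6,5,0,5,6}{7}$ and stretches $\{2,1\},\{3\}$ shows), so one must carefully apply the adjacency and overlap relations and argue that every correction term either vanishes by containment or leaves the idempotent corner and is therefore killed by $\pjw[v{-}1]\cdots\pjw[v{-}1]$. Once this combinatorial step is organised, the algebraic identification of the endomorphism ring as a product of dual numbers is forced.
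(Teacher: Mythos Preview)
The paper does not prove this lemma here: it is stated with an immediate $\square$ and an explicit citation to \cite[Lemma~3.23]{TuWe-tilting}, so there is no in-paper argument to compare against. Your sketch follows what is the natural route---identify $\End_{\tilt}\big(\obstuff{T}(v-1)\big)$ with $\pjw[v{-}1]\zigzag\pjw[v{-}1]$, read off a basis from \fullref{theorem:main-tl-section}, and verify the dual-numbers relations---and this is essentially what the cited reference does.

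Two points of caution in your sketch. First, the claim that ``error terms land outside the corner algebra and are killed by idempotent truncation'' is not correct as stated: the relations in \fullref{theorem:main-tl-section} are identities in fixed hom spaces, so rewriting an element of $\pjw[v{-}1]\zigzag\pjw[v{-}1]$ never produces terms outside that corner. The correction terms in the shuffling step vanish for other reasons---containment (relation (2)) and the specific shape of the adjacency relations. For adjacent minimal stretches $S_{1}<S_{2}$ one really uses $\Down{S_{2}}\Down{S_{1}}=\Up{S_{1}}\Down{S_{2}}\funcH[S_{1}]$ and its dual to reorganise, and the scalars $\funcH$ cancel; there is no truncation argument. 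Second, in your computation of $(\loopdown{S_{k}}{v{-}1})^{2}$ the zigzag relation is being applied to $\Down{S_{k}}\Up{S_{k}}$ at the vertex $v[S_{k}]$, not at $v$, and the down-admissible hull of $S_{k}$ for $v[S_{k}]$ need not equal $S_{k}$. The clean way to get $(\loopdown{S_{k}}{v{-}1})^{2}=0$ is the identity $\Down{S}\Up{S}\Down{S}\pjw[v{-}1]=0$ recorded in \fullref{lemma:dualnumbers2}, which in \cite{TuWe-tilting} is proved together with the present statement.
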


\begin{example}
Again, let us consider $\ppar=7$ and $v=\pbase{3,1,6,5,0,5,6}{7}$. Recall that
we have calculated the minimal down-admissible stretches of $v$ in
\fullref{example:downad}.(b). Hence, $\End_{\tilt}\big(\obstuff{T}(v-1)\big)$
has generators $\loopdown{\{5\}}{v{-}1}$, $\loopdown{\{4\}}{v{-}1}$,
$\loopdown{\{3\}}{v{-}1}$ and $\loopdown{\{0\}}{v{-}1}$ as well as
$\loopdown{\{2,1\}}{v{-}1}$. The maximal loop
$\loopdown{\{5|4|3|2,1|0\}}{v{-}1}=\loopdown{\{5\}}{v{-}1}\loopdown{\{4\}}{v{-}1}
\loopdown{\{3\}}{v{-}1} \loopdown{\{2,1\}}{v{-}1} \loopdown{\{0\}}{v{-}1}$
can be thought of as a head-to-socle map on $\obstuff{T}(v-1)$.
\end{example}

For later use we also recall:

\begin{lemmaqed}(See \cite[Lemma 3.23]{TuWe-tilting}.)
\label{lemma:dualnumbers2} 
We have
\begin{gather}\label{eq:DUD}
\Down{S}\Up{S}\Down{S}\pjw[v{-}1]=0, 
\quad
\pjw[v{-}1]\Up{S}\Down{S}\Up{S}=0,
\end{gather}
whenever $S$ is down-admissible for $v$.
\end{lemmaqed}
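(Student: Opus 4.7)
The plan is to prove the first identity $\Down{S}\Up{S}\Down{S}\pjw[v-1]=0$ and deduce the second from it by applying the anti-involution of $\zigzag$ induced by the contragredient duality on tilting modules: this duality fixes each $\obstuff{T}(v-1)$ and, up to scalar, interchanges $\Down{S}$ with $\Up{S}$, so transposes the relation $\Down{S}\Up{S}\Down{S}=0$ into $\Up{S}\Down{S}\Up{S}=0$ at the vertex $v[S]-1$.

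\textbf{Reduction to a single minimal stretch.} Decompose $S=S_{i_{1}}\sqcup\cdots\sqcup S_{i_{k}}$ with $S_{i_{1}}<\cdots<S_{i_{k}}$ into minimal down-admissible stretches. Distinct stretches are separated by at least one position not in $S$, so $\dist(S_{i_m},S_{i_{m'}})\geq 2$ for $m\neq m'$, and far-commutativity (relation (3)) lets us freely rearrange any product of $\Up{S_{i_m}}$, $\Down{S_{i_{m'}}}$ with $m\neq m'$. Combined with \fullref{lemma:dualnumbers}, this gives
\begin{equation*}
\Down{S}\Up{S}\Down{S}\pjw[v-1]=\bigl(\Down{S}\pjw[v-1]\bigr)\cdot\textstyle\prod_{m}\loopdown{S_{i_m}}{v-1},
\end{equation*}
and it suffices to show that post-composition with any single factor $\loopdown{S_{i_m}}{v-1}$ annihilates $\Down{S}\pjw[v-1]$. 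After commuting $\Up{S_{i_m}}\Down{S_{i_m}}$ past the $\Down{S_{i_j}}$ with $j\neq m$ using far-commutativity, this reduces, at a suitably reflected vertex, to the identity for the single minimal stretch $S_{i_m}$.

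\textbf{Base case.} When $S$ is a single minimal down-admissible stretch, apply the Zigzag relation (6) to $\Down{S}\Up{S}\pjw[v[S]-1]$. Since $S$ is both down- and up-admissible for $v[S]$, the down-admissible hull of $S$ at $v[S]$ is $S$ itself, and the digit $a_{\max(S)+1}$ is unchanged under $[S]$-reflection, so
\begin{equation*}
\Down{S}\Up{S}\pjw[v[S]-1]=\funcg(a_{\max(S)+1})\Up{S}\Down{S}\pjw[v[S]-1]+\funcf(a_{\max(S)+1})\Up{T}\Up{S}\Down{S}\Down{T}\pjw[v[S]-1],
\end{equation*}
where $T>S$ is the smallest minimal down-admissible stretch, if it exists. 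Composing with $\Down{S}\pjw[v-1]$, the first summand contains $\Down{S}\Down{S}$ and vanishes by containment (2). For the second summand, we analyze $\Down{T}\Down{S}\pjw[v-1]$: if $\dist(T,S)>1$, far-commutativity brings this to $\Down{S}\Down{T}$, and we again meet $\Down{S}\Down{S}=0$; if $\dist(T,S)=1$, the adjacency relation (4) rewrites $\Down{T}\Down{S}=\funcH[S]\Up{S}\Down{T}$, yielding an internal $\Down{S}\Up{S}\pjw[v[T]-1]$ to which the Zigzag expansion can be applied once more.

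\textbf{Main obstacle.} The technical heart is the adjacent sub-case $\dist(T,S)=1$, where the recursion produces new Zigzag factors at $v[T]$. Termination is ensured because each iteration strictly decreases the relevant initial digit (the $[T]$-reflection replaces $a_{\min(T)}$ by $-a_{\min(T)}\bmod\ppar$, and subsequent applications of (4) and (6) force the configuration up the $\ppar$-adic tower), and all branches eventually terminate in an expression containing a forbidden subword $\Down{S'}\Down{S'}$ or $\Up{S'}\Up{S'}$, which vanishes by (2). The scalar bookkeeping is controlled, since every scalar that appears lies in $\F$ and factors through $\funcf,\funcg,\funch$.
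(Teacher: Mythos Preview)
The paper does not give its own proof of this lemma; it is stated with an immediate $\square$ and a citation to \cite[Lemma~3.23]{TuWe-tilting}. So there is no in-paper argument to compare against, and the question is purely whether your approach stands on its own.

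Your reduction step contains a genuine error. You assert that the minimal down-admissible stretches $S_{i_{1}}<\cdots<S_{i_{k}}$ of $S$ satisfy $\dist(S_{i_{m}},S_{i_{m'}})\geq 2$. This is false: for $v=\pbase{3,2,1}{5}$ and $S=\{1,0\}$, the minimal down-admissible stretches are the adjacent singletons $\{0\}$ and $\{1\}$, so far-commutativity does not apply. What \emph{is} true is that the \emph{maximal} consecutive runs in $S$ are separated by distance $\geq 2$; but those need not be minimal down-admissible. The reduction can be repaired without far-commutativity: by \fullref{lemma:dualnumbers} the loops $\loopdown{S_{i_{m}}}{v-1}$ commute, so you may place $\loopdown{S_{i_{k}}}{v-1}$ immediately to the right of $\Down{S_{i_{k}}}$ in $\Down{S}=\Down{S_{i_{1}}}\cdots\Down{S_{i_{k}}}$ and reduce to the single-stretch case for $S_{i_{k}}$ at $v$ itself.

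Your base case has two concrete mistakes. First, the digit at position $\max(S)+1$ \emph{does} change under the reflection $v\mapsto v[S]$: if $S=\{l,\dots,k\}$ is a minimal down-admissible stretch (so $a_{l}\neq 0$, $a_{l+1}=\cdots=a_{k}=0$, $a_{k+1}\neq 0$), then the $(k{+}1)$st digit of $v[S]$ is $a_{k+1}-1$, not $a_{k+1}$; hence the scalars in the Zigzag expansion at $v[S]$ are $\funcg(a_{k+1}-1)$ and $\funcf(a_{k+1}-1)$, not $\funcg(a_{k+1})$ and $\funcf(a_{k+1})$. Second, the down-admissible hull of $S$ at $v[S]$ is \emph{not} $S$ when $a_{k+1}=1$: in that case the $(k{+}1)$st digit of $v[S]$ is $0$, forcing $k+1$ into the hull. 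So the first Zigzag term does not reduce to $\Down{S}\Down{S}$ in general, and your containment argument does not directly apply.

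Finally, your ``Main obstacle'' paragraph is a sketch, not a proof: the claimed monotone quantity (``strictly decreases the relevant initial digit'') is not made precise, and it is unclear why every branch of the recursion terminates in a containment pattern rather than cycling. As written, the argument does not establish the identity.
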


\subsection{Closures of the algebra}\label{subsection:closure}

Using \fullref{theorem:main-tl-section} we get a sequence
\begin{gather*}
\zigzag^{1}
\twoheadleftarrow
\zigzag^{2}
\twoheadleftarrow
\zigzag^{3}
\twoheadleftarrow
\dots
\twoheadleftarrow
\lim_{\longleftarrow}
\zigzag^{i}
=:
\zigzagc,
\end{gather*}
where $\zigzag^{i}$ is the quotient of 
$\zigzag$ obtained by the ideal generated by 
$\{\pjw[v{-}1]\mid v>i\}$. (Note that the category of $\K$-algebras
is complete, {\ie} has all limits, so $\zigzagc$ is indeed a $\K$-algebra.

\begin{remark}
Note that elements in $\zigzag$ are finite $\K$-linear combinations of elements from \fullref{theorem:main-tl-section}.(Basis), while 
elements in $\zigzagc$ can be (countably) infinite 
$\K$-linear combinations of these. 
In particular, the unit of $\zigzagc$ is
\begin{gather}\label{eq:unit}
1={\textstyle\sum_{v\in\N}}\,\pjw[v{-}1]\in\zigzagc,
\end{gather}
while $\zigzag$ is only locally unital.
\end{remark}

Below $\zigzagy$ will denote either $\zigzag$ 
or $\zigzagc$.

\section{The center of the quiver algebra}\label{section:center-alg}

We will now compute the center of $\zigzagy$.

\subsection{Reduction to connected components}\label{subsection:reduction-center}

Note that \fullref{theorem:main-tl-section} implies that the $\K$-algebra $\zigzagc$ decomposes as
\begin{align*}
\zigzag
&= 
{\textstyle\bigoplus_{e\in\eve}}\,\zigzag_{e{-}1}
,\quad
\zigzag_{e{-}1}
:={\textstyle\bigoplus_{v,w\in\block{e}{\ppar}}}\,\idemy[w{-}1]\zigzag\idemy[v{-}1]
,\\
\zigzagc
&= 
{\textstyle\prod_{e\in\eve}}\,\zigzagc_{e{-}1}
,\quad
\zigzagc_{e{-}1}
:={\textstyle\prod_{v,w\in\block{e}{\ppar}}}\,\idemy[w{-}1]\zigzagc\idemy[v{-}1].
\end{align*}
Here $\block{e}{\ppar}$ denotes the set of natural numbers $v$ such that the vertex
$v-1$ is in the connected component of the graph underlying $\zigzag$
which contains $e-1$. 

\begin{example}
As can be seen in \fullref{figure:main}, $\block{1}{3}=\{1,5,7,11,13,17,\dots\}$,
see also \fullref{example:quiver}.
\end{example}

Letting $\aZ(\placeholder)$ 
denote the center of an algebra, the following is thus immediate.

\begin{lemmaqed}\label{lemma:center-reduction}
We have $\aZ(\zigzag)=\bigoplus_{e\in\eve}
\aZ(\zigzag_{e{-}1})$ and $\aZ(\zigzagc)=\prod_{e\in\eve}
\aZ(\zigzagc_{e{-}1})$
\end{lemmaqed}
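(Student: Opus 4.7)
The plan is to treat both statements as instances of a general principle: if an algebra decomposes as an orthogonal direct sum (respectively, product) of subalgebras indexed by a set, then its center decomposes in the same way. Concretely, I would proceed in three short steps.

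First, I would verify that the displayed decompositions $\zigzag = \bigoplus_{e\in\eve} \zigzag_{e{-}1}$ and $\zigzagc = \prod_{e\in\eve} \zigzagc_{e{-}1}$ are genuine algebra decompositions, i.e.\ that $\zigzag_{e{-}1} \cdot \zigzag_{f{-}1} = 0$ whenever $e \neq f$. This is immediate from \fullref{theorem:main-tl-section}: every (Basis) element begins and ends at idempotents $\pjw[v{-}1], \pjw[w{-}1]$ with $v,w$ lying in the same connected component of the underlying quiver (the generators $\Up{S}\pjw[v{-}1]$ and $\Down{S}\pjw[v{-}1]$ produce morphisms only to indices $v(S)$ and $v[S]$, which sit in the same block $\block{e}{\ppar}$ as $v$). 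Hence composition preserves blocks, and distinct blocks are mutually orthogonal.

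Second, given such an orthogonal decomposition, I would prove $\aZ(\zigzag) = \bigoplus_{e\in\eve} \aZ(\zigzag_{e{-}1})$ directly. The inclusion $\supseteq$ is clear from orthogonality. For $\subseteq$, write a central $z \in \aZ(\zigzag)$ as a finite sum $z = \sum_{e} z_{e}$ with $z_{e} \in \zigzag_{e{-}1}$ (finite because $\zigzag$ is locally unital). For any $a \in \zigzag_{f{-}1}$, orthogonality yields $az = a z_{f}$ and $za = z_{f} a$, so $z_{f} a = a z_{f}$; hence $z_{f} \in \aZ(\zigzag_{f{-}1})$, as required.

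Third, the completed version is essentially the same argument. An element $z \in \zigzagc = \prod_{e} \zigzagc_{e{-}1}$ is a tuple $(z_{e})_{e \in \eve}$, and testing centrality against any $a \in \zigzagc_{f{-}1}$ (viewed inside $\zigzagc$ by extending by zero) again collapses the equation $za = az$ onto the $f$-component, forcing $z_{f} \in \aZ(\zigzagc_{f{-}1})$. The existence of the strict unit \eqref{eq:unit} does not interfere, since centrality need only be checked against elements supported in a single block. There is no real obstacle here — the only thing to be careful about is the bookkeeping around finite sums for $\zigzag$ versus infinite products for $\zigzagc$, and this is handled uniformly by the observation that orthogonality reduces both cases to a per-block centrality check.
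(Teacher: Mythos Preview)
Your proposal is correct and matches the paper's approach: the paper simply declares the lemma immediate from the block decomposition of $\zigzag$ and $\zigzagc$ established just before it, and your three steps spell out in detail exactly this standard fact (the center of an orthogonal direct sum, respectively product, of algebras is the direct sum, respectively product, of the centers). There is nothing to add.
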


The presentation of $\zigzagy$ also immediately gives:

\begin{lemmaqed}(See \cite[Proposition 5.3]{TuWe-tilting}.)
\label{lemma:isoalgs}
There are isomorphisms of algebras
$\zigzagy_{e{-}1}\cong\zigzagy_{e^{\prime}{-}1}$ for all $e,e^{\prime}\in\eve$
with equal non-zero digits. 
\end{lemmaqed}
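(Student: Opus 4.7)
The plan is to exploit the local nature of the presentation in \fullref{theorem:main-tl-section}: admissibility of subsets and the scalars $\funcf, \funcg, \funch$ each depend only on a bounded window of $\ppar$-adic digits, so a uniform shift of digit positions should induce an algebra isomorphism. Since $e, e' \in \eve$ are of generation zero, the hypothesis forces $e = a \cdot \ppar^{j}$ and $e' = a \cdot \ppar^{j'}$ for a common digit $a \in \{1, \dots, \ppar-1\}$. Assuming $j' \geq j$ without loss of generality and setting $d := j' - j$, I would define $\phi \colon \N \to \N$ by $\phi(v) := \ppar^{d} v$, which in $\ppar$-adic terms appends $d$ trailing zeros.

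First I would check that $\phi$ restricts to a bijection $\block{e}{\ppar} \to \block{e'}{\ppar}$. The key combinatorial observation is that no admissible stretch for $\phi(v)$ can touch positions below $d$: any such stretch $S_{i}$ would have $\min(S_{i}) < d$, where the digit of $\phi(v)$ is zero, violating admissibility condition (i). Consequently, admissible stretches $S$ for $v$ correspond bijectively to admissible stretches $S + d := \{s + d \mid s \in S\}$ for $\phi(v)$, and so the connected components of the quivers match up under $\phi$. Along the same lines, the inverse map $\phi^{-1}(v') := v'/\ppar^{d}$ is well-defined on $\block{e'}{\ppar}$, since every such $v'$ inherits at least $j'$ trailing zeros from its eve.

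Next I would define the algebra map $\Phi \colon \zigzagy_{e-1} \to \zigzagy_{e'-1}$ on generators by
\begin{gather*}
\pjw[v{-}1] \mapsto \pjw[\phi(v){-}1], \quad
\Down{S}\pjw[v{-}1] \mapsto \Down{S+d}\pjw[\phi(v){-}1], \quad
\Up{S^{\prime}}\pjw[v{-}1] \mapsto \Up{S^{\prime}+d}\pjw[\phi(v){-}1],
\end{gather*}
and verify that the six families of relations from \fullref{theorem:main-tl-section} are preserved. Relations (1), (2), (3) and (5) are purely set-theoretic (idempotency, containment, distance, overlap) and are manifestly invariant under a uniform shift. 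Relations (4) and (6) additionally involve the scalar operators, but these depend only on the digit $a_{\max(S)+1}$ of $v$, which equals the digit at position $\max(S+d)+1$ of $\phi(v)$ by construction, so the scalars agree on both sides. The analogous construction applied to $\phi^{-1}$ supplies the two-sided inverse, and the $\zigzagc$ case follows because $\phi$ is simply a relabeling of the index set used to form the direct-product completion, without any additional compatibility needed with truncations.

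I expect the main obstacle to be the boundary analysis of the first bijection step, namely ruling out that the trailing zero digits of $\phi(v)$ create new admissible sets which would generate extra morphisms in $\zigzagy_{e'-1}$. This reduces to the combinatorial observation that a stretch of consecutive integers meeting any position $<d$ must have its minimum in the zero-digit range, which makes admissibility impossible. Once this boundary behaviour is dispensed with, the remaining verifications are routine term-by-term digit comparisons.
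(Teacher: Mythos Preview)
Your proposal is correct and is precisely the natural elaboration of what the paper leaves implicit: the paper states this lemma without proof, noting only that it follows immediately from the presentation in \fullref{theorem:main-tl-section} (and citing \cite[Proposition~5.3]{TuWe-tilting} for the details). Your digit-shift map $\phi(v)=\ppar^{d}v$ and the corresponding relabeling $S\mapsto S+d$ of admissible sets is exactly the isomorphism one reads off from that presentation, and your verification that the trailing zeros prevent any new admissible stretches from appearing is the only nontrivial point, which you handle correctly.
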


\begin{remark}
In fact $\zigzagy_{e{-}1}\cong\zigzagy_{e^{\prime}{-}1}$ for all $e,e^{\prime}\in\eve$, regardless of the 
digits. But the isomorphism is, in contrast to the one from 
\fullref{lemma:isoalgs}, not immediate from \fullref{theorem:main}, the reason being the scalars 
$\funcF$, $\funcG$ and $\funcH$ appearing therein.
\end{remark}

Thus, it suffices to compute $\aZ(\zigzagy_{e{-}1})$ for
$e\in\eve^{<\ppar}\subset\eve$.

\subsection{The central elements}\label{subsection:elements-center}

\begin{definition}\label{definition:some-elements}
Let $v=\pbase{a_{j},\dots,a_{0}}{\ppar}\in\N$. 
For $i\neq j$ and $a_{i}\neq 0$, we define elements in 
$\zigzagy$ by
\begin{gather*}
\Down{i}\pjw[v{-}1] 
:= 
\Down{\hull[\{i\}]}\pjw[v{-}1],
\quad 
\pjw[v{-}1]\Up{i}
:=
\pjw[v{-}1]\Up{\hull[\{i\}]},
\quad
\loopdown{}{i}\pjw[v{-}1] 
:=
(-1)^{a_{i}}a_{i}\Up{i}\Down{i}\pjw[v{-}1].
\end{gather*}
Furthermore, we define
\begin{gather*}
\Dset:=\{i\in\N[0]\mid i<j,a_{i}\neq 0\}.
\end{gather*}
In words, $\Dset$ is the set of non-zero, non-leading digits of 
$v=\pbase{a_{j},\dots,a_{0}}{\ppar}$.
For $\setstuff{S}\subset\Dset$ we consider
\begin{gather}\label{eq:commute}
\loopdown{}{\setstuff{S}}\pjw[v{-}1]:=
{\textstyle\prod_{i\in\setstuff{S}}}\,
\loopdown{}{i}\pjw[v{-}1]. 
\end{gather}
\end{definition}

Note that the factors in \eqref{eq:commute} commute by \fullref{lemma:dualnumbers}.

\begin{example}
For $\ppar=7$ and $v=\pbase{3,1,6,5,0,5,6}{7}$ 
we have $\Dset=\{0,1,3,4,5\}$. Thus, for $\setstuff{S}=\Dset$ we have 
$\loopdown{}{\setstuff{S}}\pjw[v{-}1]=\loopdown{}{5}\loopdown{}{4}\loopdown{}{3}\loopdown{}{1}\loopdown{}{0}\pjw[v{-}1]$.
\end{example}

\begin{definition}\label{definition:equivrel}
Let $\setstuff{S}\subset\N[0]$ be finite. 
We define an equivalence relation $\sim_{\setstuff{S}}$ on $\N$
as follows. First, we let $v\sim_{\setstuff{S}}v$ for all $v\in\N$.
Further, for $v,w\in\N$ with $v\neq w$ we declare $v\sim_{\setstuff{S}}w$ if $\setstuff{S}\subset\Dset\cap
\Dset[w]$ and $v=w[k]$ or $w=v[k]$ for some $k\notin\setstuff{S}$,
Finally, we take the transitive closure. 
\end{definition}

In words, we have $v\sim_{\setstuff{S}}w$ if and 
only if either $v=w$, or $v-1$ and $w-1$
are connected by a chain of arrows 
$\Down{k}$ or $\Up{k}$ for $k\notin\setstuff{S}$ such that all
vertices that are involved have $\setstuff{S}$ among their non-maximal 
and non-zero digits. We
call such a chain an $\setstuff{S}$-path.

\begin{definition} 
For $v\in\block{e}{\ppar}$ we introduce notation for the
$\sim_{\Dset}$ equivalence class of $v$: 
\begin{gather*}
\Cset:=\{
w\in\block{e}{\ppar}\mid v\sim_{\Dset}w
\}.
\end{gather*}
\end{definition}

\begin{example}\label{example:cset}
Take again $\ppar=7$, $v=\pbase{3,1,6,5,0,5,6}{7}$ 
and $\setstuff{S}=\Dset=\{0,1,3,4,5\}$. Then:
\begin{enumerate}[label=(\alph*)]

\setlength\itemsep{0.15cm}

\item We have $w=v(5,4,3)=\pbase{4,5,0,2,0,5,6}{7}\notin\Cset$, 
since $4\in\Dset$, but $4\notin\Dset[w]$. 

\item Recall that the only up-admissible singleton set not in $\Dset$ is
$\{6\}$, see \fullref{example:downad}.(c). Thus, the only direct neighbor of $v$
in $\Cset$ is $v_{1}=v(6)=\pbase{1,4,1,6,5,0,5,6}{7}$.

\item The set $\Cset$ is an infinite set since 
$v_{1}=v(6)=\pbase{1,4,1,6,5,0,5,6}{7}\in\Cset$, and recursively $v_{k}=v_{k{-}1}(5+k)\in\Cset$.
Note that the first six digits of the $v_{k}$ agree with those of $v$.

\end{enumerate} 
\end{example}

\begin{lemma}\label{lemma:charCv} 
Let $v=\pbase{a_{j},\dots,a_{0}}{\ppar}\in\N$ and
$w=\pbase{b_{k},\dots,b_{0}}{\ppar}\in\N$. If $w\in
\Cset$, then $b_{i}=a_{i}$ for $1\leq i<j$.
\end{lemma}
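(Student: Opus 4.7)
The plan is to induct on the length of a chain $v = u_{0}, u_{1}, \dots, u_{n} = w$ realizing the equivalence $v \sim_{\Dset} w$, proving the slightly stronger statement that the $i$-th digit of $u_{m}$ equals $a_{i}$ for all $0 \leq i < j$. The base case $m = 0$ is trivial. For the inductive step I consider a single link $u \to u^{\prime}$, which by \fullref{definition:equivrel} is an arrow $\Down{k}$ or $\Up{k}$ whose underlying minimal admissible stretch $T$ is labeled by some $k \notin \Dset$; moreover, $\Dset \subset \Dset[u] \cap \Dset[u^{\prime}]$ by the chain constraint.

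The first key step is to show that $\min(T) \geq j$. Writing $c_{i}$ for the digits of $u$, the inductive hypothesis gives $c_{i} = a_{i}$ for $0 \leq i < j$, while admissibility condition (i) forces $c_{\min(T)} \neq 0$. If one had $\min(T) < j$, these two facts would yield $a_{\min(T)} \neq 0$, so $\min(T) \in \Dset$, contradicting $k = \min(T) \notin \Dset$. Therefore $\min(T) \geq j$, and in particular neither any position in $T$ nor the position $\max(T) + 1$ (where an up-reflection triggers the $+2$ adjustment) lies in $\{0, 1, \dots, j - 1\}$.

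With $\min(T) \geq j$ in hand, the explicit signed-digit formulas for $v[T]$ and $v(T)$ from \fullref{subsection:basics} show that positions $< j$ are left untouched by the reflection, and the subsequent normalization to a standard $\ppar$-adic expansion propagates carries only upward, from positions $\geq j$. Hence the digits of $u^{\prime}$ at $0 \leq i < j$ equal those of $u$, which by the inductive hypothesis equal $a_{i}$. This closes the induction, and restricting to $1 \leq i < j$ recovers the claim.

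I expect the main obstacle to be cosmetic rather than mathematical: one must carefully match the labeling $k$ on an arrow $\Down{k}$ or $\Up{k}$ in \fullref{definition:equivrel} with the $\min(T)$ of the underlying admissible stretch in the quiver of \fullref{theorem:main-tl-section}, reconciled with the shorthand of \fullref{definition:some-elements}. Once this identification is pinned down, the combinatorial core of the argument is immediate.
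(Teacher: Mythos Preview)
Your proof is correct and follows essentially the same approach as the paper's own proof: induction on the length of a $\Dset$-path, strengthening to all digits $0\leq i<j$, and deducing from the inductive hypothesis together with the nonvanishing condition on the lowest digit of an admissible stretch that $k=\min(T)\geq j$, whence the reflection leaves the low digits untouched. Your treatment is in fact slightly more explicit than the paper's about the identification $k=\min(T)$ and the upward propagation of carries.
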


\begin{proof} 
We prove by induction on 
the length $\ell$ of a $\Dset$-path that the
first $j$ digits of all elements $w\in\Cset$ agree 
with those of $v$. For
$\ell=0$ there is nothing to 
prove. Now suppose $w$ is at the end of a
$\Dset$-path of length $\ell\geq 1$, 
whose last step is a morphism $\Up{k}$ or
$\Down{k}$ from $w^{\prime}$ to $w$, 
with $k\notin\Dset$. Since $w^{\prime}$
is reached from $v$ by a $\Dset$-path 
of length $\ell-1$, the induction hypothesis
implies that the first $j$ digits of 
$w^{\prime}$ agree with those of $v$. In
particular, each of the first $j$ 
digits is either in $\Dset$ or zero, and
so $k\geq j$. Then $w=w^{\prime}[k]$, respectively $w=w^{\prime}(k)$, have the same first
$j$ digits as $w^{\prime}$ and also as $v$.
\end{proof}

\fullref{lemma:charCv} implies that $\Cset$ for
$v=\pbase{a_{j},\dots,a_{0}}{\ppar}\in\N$ is a shifted copy of the block
$(a_{j})_{\ppar}$, see \cite[Proposition 5.2]{TuWe-tilting}. As a consequence we
get the following.

\begin{corollary}\label{corollary:Cv} 
For every $v\in \N$, the 
set $\Cset$ is infinite, and if $w\in\Cset$, 
then $w\geq v$.
\end{corollary}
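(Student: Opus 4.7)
My plan is to leverage the identification of $\Cset$ as a shifted copy of the block $\block{a_{j}}{\ppar}$, observed just above the statement as a consequence of \fullref{lemma:charCv} and \cite[Proposition 5.2]{TuWe-tilting}. Explicitly, every $w \in \Cset$ has the form $w = c\ppar^{j} + r$, where $r := v - a_{j}\ppar^{j}$ is the fixed ``tail'' of $v$ below position $j$ and $c$ ranges over $\block{a_{j}}{\ppar}$. This gives a bijection $\block{a_{j}}{\ppar} \leftrightarrow \Cset$, which is the vehicle for both conclusions.

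For the infinity claim, I would use this bijection to reduce to showing that $\block{a_{j}}{\ppar}$ is infinite. This is visible in \fullref{figure:main} and can be verified constructively: iteratively applying up-reflections at increasing positions starting from $a_{j}$ produces an infinite sequence of pairwise distinct vertices in the block, entirely analogous to the construction of the $v_{k}$ in \fullref{example:cset}.

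For the inequality $w \geq v$, I would show that every $c \in \block{a_{j}}{\ppar}$ satisfies $c \geq a_{j}$, whence $w = c\ppar^{j} + r \geq a_{j}\ppar^{j} + r = v$ follows at once. The key observation is that $a_{j} \in \{1, \dots, \ppar - 1\}$ admits no down-admissible stretch in $\N$: its only non-zero digit sits at position $0$, and reflecting it downwards would send the vertex outside $\N$. Hence $a_{j}$ emits no down-arrow, so every other vertex of its block is reached from $a_{j}$ by a path whose first step is an up-reflection, which strictly increases the value.

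The main obstacle is to rule out that a later down-move along such a path brings the value back below $a_{j}$. The cleanest resolution is to appeal directly to the explicit description of $\mathrm{SL}_{2}$-blocks in \cite[Proposition 5.2]{TuWe-tilting}, where the vertices of $\block{a_{j}}{\ppar}$ are identified with an orbit under a reflection action whose minimum is manifestly $a_{j}$. Alternatively, one can argue inductively that every down-move in the block can only undo a preceding up-move in a value-bounded fashion, but carrying this out cleanly requires a careful case analysis of the admissibility conditions of \fullref{theorem:main-tl-section}, which is why the reference-based route is preferable.
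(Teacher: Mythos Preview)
Your proposal is correct and follows exactly the paper's route: the paper states the corollary without separate proof, as an immediate consequence of the shifted-block identification $\Cset \leftrightarrow (a_{j})_{\ppar}$ recorded just above it (itself deduced from \fullref{lemma:charCv} and \cite[Proposition 5.2]{TuWe-tilting}). Your residual worry about later down-moves can be dispelled without external reference or case analysis: every $c \in (a_{j})_{\ppar}$ with $c \neq a_{j}$ has at least two non-zero digits and hence a non-empty down-admissible set, so iterated down-reflection yields a strictly decreasing sequence in the block which, by well-ordering of $\N$, terminates at the unique eve $a_{j}$, showing $a_{j}$ is the minimum of its block.
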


The loops that we define now are only elements 
in $\zigzagc$ but not in $\zigzag$.

\begin{definition}\label{definition:Lv}
For $v\in\block{e}{\ppar}$ and $v\neq e$ we define elements 
\begin{gather}\label{eq:maxloop}
\loopdown{}{v}:=
{\textstyle\sum_{w\in\Cset}}\,
\loopdown{}{\Dset}\pjw[w{-}1]
\in\zigzagc_{e{-}1}
.
\end{gather} 
\end{definition}

In words, $\loopdown{}{v}$ consists of the maximal loop at $v-1$, together with the
sum of all loops of the same type on $w-1$ for all $w$ in the shifted block $\Cset$.

\subsection{The center}\label{subsection:alg-center}

By \fullref{lemma:center-reduction}, the following computes 
the center $\aZ(\zigzagy)$:

\begin{theorem}\label{theorem:center-algebra}
For $e\in\eve^{<\ppar}$ we have algebra isomorphisms
\begin{gather*}
\aZ(\zigzag_{e{-}1})
\cong\{0\},
\quad
\aZ(\zigzagc_{e{-}1})
\xrightarrow{\cong}
{\K}\big[X_{v}\mid v\in\block{e}{\ppar}\setminus\{e\}\big]
\Big/
\big\langle X_{v}X_{w}\mid v,w\in\block{e}{\ppar}\setminus\{e\}\big\rangle
,\quad 
\loopdown{}{v}\mapsto X_{v}.
\end{gather*}
\end{theorem}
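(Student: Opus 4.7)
The plan is to construct the isomorphism $\K[X_v]/\langle X_v X_w\rangle \to \aZ(\zigzagc_{e{-}1})$ explicitly via $X_v \mapsto \loopdown{}{v}$, verifying first that this defines an algebra homomorphism (centrality of each $\loopdown{}{v}$ and the relations $\loopdown{}{v}\loopdown{}{w}=0$), then that it is bijective. The statement $\aZ(\zigzag_{e{-}1})\cong\{0\}$ will then drop out as a byproduct of the classification step.

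For centrality of $\loopdown{}{v}$ in $\zigzagc_{e{-}1}$, commutation with idempotents is immediate from the $\Cset$-support in its defining sum. To commute with a generator $\Up{S}\pjw[u{-}1]$ or $\Down{S}\pjw[u{-}1]$ at some $u\in\block{e}{\ppar}$, I would split on whether $u\in\Cset$ and whether $S$ intersects $\Dset$. When $u\in\Cset$ and $S\cap\Dset=\emptyset$, \fullref{lemma:charCv} places $S$ strictly above the loop positions, so \fullref{theorem:main-tl-section}.(3)--(5) lets the arrow slide past the maximal loop and land at $u(S)$ or $u[S]$, which is again in $\Cset$; the scalars $\funcF,\funcG,\funcH$ from the zigzag relation then need to be tracked to confirm that the two sides match after this slide. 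When $S$ hits $\Dset$, the zigzag relation \fullref{theorem:main-tl-section}.(6) together with \fullref{lemma:dualnumbers2} forces both sides to vanish, since the maximal loop already contains a factor $\loopdown{\{i\}}{u{-}1}$ that conflicts with the arrow at position $i$. The vanishing $\loopdown{}{v}\loopdown{}{w}=0$ then follows from \fullref{lemma:dualnumbers}: on any shared support vertex $u\in\Cset\cap\Cset[w]$ the product contains some minimal loop $\loopdown{S_k}{u{-}1}$ as a repeated factor, which squares to zero.

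The main obstacle is surjectivity. Given central $z\in\zigzagc_{e{-}1}$, I would decompose $z=\sum_{u}z_{u}$ with $z_{u}=\pjw[u{-}1]z\pjw[u{-}1]\in\End\big(\obstuff{T}(u{-}1)\big)$; by \fullref{lemma:dualnumbers}, each $z_{u}$ is a polynomial in the minimal loops at $u{-}1$. Testing centrality against every minimal $\Up{S}\pjw[u{-}1]$ and $\Down{S}\pjw[u{-}1]$ yields transport equations that I expect to force (i) the $\pjw[u{-}1]$-coefficient of $z_u$ to be constant in $u$ across the entire block $\block{e}{\ppar}$, contributing a $\K$-multiple of the unit $\sum_{u\in\block{e}{\ppar}}\pjw[u{-}1]\in\zigzagc_{e{-}1}$; (ii) any intermediate loop factor in $z_{u}$ strictly shorter than the maximal loop $\loopdown{}{\Dset[u]}\pjw[u{-}1]$ to vanish, because some exterior up-arrow $\Up{S}$ witnesses an incompatible target; and (iii) the maximal-loop coefficients to be rigidly propagated along $\sim_{\Dset[u]}$-paths, producing exactly one summand of the form $\loopdown{}{v}$ per $\Cset$-class. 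The technical heart is bookkeeping the scalars $\funcF,\funcG,\funcH$ through these transport equations, but these were arranged in \cite{TuWe-tilting} precisely so that the maximal loops transport cleanly. Injectivity of the map is immediate afterwards, since the $\loopdown{}{v}$ are supported on distinct maximal loops at distinct $\Cset$-classes and are linearly independent from the unit.

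Finally, the triviality $\aZ(\zigzag_{e{-}1})\cong\{0\}$ falls out of the same classification: a non-zero central element is forced to have infinite support, either on all of $\block{e}{\ppar}$ (for the constant part) or on an infinite $\Cset$ by \fullref{corollary:Cv} (for a loop part), whereas elements of $\zigzag_{e{-}1}$ are finite $\K$-linear combinations of basis elements from \fullref{theorem:main-tl-section}. Hence no non-zero central element lies in $\zigzag_{e{-}1}$.
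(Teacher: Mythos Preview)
Your overall strategy matches the paper's: establish centrality of the $\loopdown{}{v}$, verify $\loopdown{}{v}\loopdown{}{w}=0$, show these together with the unit span the center, and deduce triviality of $\aZ(\zigzag_{e{-}1})$ from the infinite support of nontrivial central elements. However, your surjectivity argument contains a genuine error. Claim (ii), that intermediate loops in $z_u$ strictly shorter than the maximal loop must vanish, is false: take $\ppar=3$, $v=5=\pbase{1,2}{3}$ with $\Dset[5]=\{0\}$, and $u=17=\pbase{1,2,2}{3}\in\Cset[5]$ with $\Dset[17]=\{0,1\}$; then $\loopdown{}{5}\pjw[16]=\loopdown{}{\{0\}}\pjw[16]$ is a non-maximal loop at $17$, yet $\loopdown{}{5}$ is central. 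In general $\loopdown{}{v}$ at $u\in\Cset$ with $u>v$ equals $\loopdown{}{\Dset}\pjw[u{-}1]$, and typically $\Dset\subsetneq\Dset[u]$, so this is an intermediate loop. Your (ii) and (iii) are thus in tension: the maximal loop at $v$, propagated along $\Cset$, becomes intermediate at $u>v$.

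The paper (\fullref{lemma:loopscentral-op}) resolves this by arguing inductively on the \emph{minimal} support vertex. Given central $z$, let $v$ be minimal with $z\pjw[v{-}1]\neq 0$; at this particular $v$ one shows that only the maximal loop can appear, because any shorter $\loopdown{}{S}\pjw[v{-}1]$ with $S\subsetneq\Dset$ survives under $\Down{i}$ for $i\in\Dset\setminus S$ to give a nonzero contribution at the smaller vertex $v[i]$, contradicting minimality. Subtracting the resulting multiple of $\loopdown{}{v}$ leaves a central element supported only on vertices $>v$ (by \fullref{corollary:Cv}), and one iterates. Your claim (ii) is valid only at this minimal vertex, not globally. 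A smaller point: for $\loopdown{}{v}\loopdown{}{w}=0$ on common support you need a shared loop index in $\Dset\cap\Dset[w]$; the paper secures this via \fullref{lemma:zerodigit}, which shows $0\in\Dset$ for every $v\in\block{e}{\ppar}\setminus\{e\}$, so that $\loopdown{}{0}$ always appears as a repeated factor.
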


\begin{proof}

We first observe that we have $\aZ(\zigzagy_{e{-}1})\subset
\prod_{v\in\block{e}{\ppar}}\pjw[v{-}1]\zigzagy\pjw[v{-}1]$. Indeed, if $z\in
\aZ(\zigzagy_{e{-}1})$ and $v\in\block{e}{\ppar}$, then we have $z \pjw[v{-}1]=z
\pjw[v{-}1]^{2}=\pjw[v{-}1]z\pjw[v{-}1]$. The observation follows since $z$
is by assumption a finite or infinite sum of terms $z \pjw[v{-}1]$ for $v\in\block{e}{\ppar}$.

\begin{enumerate}[label=$\bullet$]
\setlength\itemsep{0.15cm}

\item Let us first consider $\zigzagc_{e{-}1}$. Here \fullref{lemma:loopscentral}.(b) (proven below) shows that the infinite sums
$\loopdown{}{v}$ defined above are in the center $\aZ(\zigzagc_{e{-}1})$, while
\fullref{lemma:loopscentral-op} (also proven below) shows that they, together with the identity, give a basis of
$\aZ(\zigzagc_{e{-}1})$. Thus,
\begin{gather*}
\aZ(\zigzagc_{e{-}1})\cong
{\K}\big\langle 1,\loopdown{}{v}\mid v\in\block{e}{\ppar}\setminus\{e\}\big\rangle.
\end{gather*}
The second isomorphism then follows because we already know the relations among
the $\loopdown{}{v}$, {\cf} \fullref{lemma:prodzero}.

\item For the first isomorphism, note that we have an inclusion
$\aZ(\zigzag_{e{-}1})\hookrightarrow\aZ(\zigzagc_{e{-}1})$ of non-unital
$\K$-algebras. However, \fullref{lemma:loopscentral} implies that all
non-trivial elements of $\aZ(\zigzagc_{e{-}1})$ are supported on infinitely many
idempotents, so $\aZ(\zigzag_{e{-}1})=\aZ(\zigzagc_{e{-}1})\cap\zigzag_{e{-}1} =\{0\}$.\qedhere
\end{enumerate}

\end{proof}

\subsection{Some lemmas for the proof of \fullref{theorem:center-algebra}}\label{subsection:lemmas}

Next, we identify the relations among the $\loopdown{}{v}$.

\begin{lemma}\label{lemma:zerodigit}
For $e\in\eve^{<\ppar}$ and $v\in\block{e}{\ppar}$ with $v\neq e$ we have $0\in\Dset$.
\end{lemma}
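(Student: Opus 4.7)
My plan is to verify the two conditions for $0\in\Dset$ separately: $j\geq 1$ so that position $0$ is non-leading, and $a_{0}\neq 0$.

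For $j\geq 1$: every element of $\{1,\dots,\ppar-1\}$ is an eve in $\eve^{<\ppar}$, and by the direct sum decomposition $\zigzag=\bigoplus_{e\in\eve}\zigzag_{e{-}1}$ recalled in \fullref{subsection:reduction-center}, distinct eves lie in distinct blocks. Hence if $v\in\block{e}{\ppar}$ were single-digit, then $v$ would be the unique eve of its block, forcing $v=e$ and contradicting the hypothesis. Thus $v\geq\ppar$ and $j\geq 1$.

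For $a_{0}\neq 0$: I would establish the stronger invariant that every $w\in\block{e}{\ppar}$ satisfies $w\equiv\pm e\pmod{\ppar}$. Since $0<e<\ppar$, this forces $w\bmod\ppar$ to lie in $\{e,\ppar-e\}$, both nonzero. The invariant is proved by induction along a quiver path from $e$ to $w$: at each step the neighbour has the form $u[S]$ or $u(S)$ for some admissible stretch $S$, and a direct expansion of the reflection formulas gives
\begin{gather*}
u[S]\equiv u(S)\equiv
\begin{cases}
u & \text{if }0\notin S,\\
-u & \text{if }0\in S
\end{cases}
\pmod{\ppar}.
\end{gather*}
Hence the class $\pm e \bmod \ppar$ is preserved along any quiver path, and therefore $a_{0}$ of $v$ lies in $\{e,\ppar-e\}$.

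The main point to verify with care is the upward case $u(S)$, whose defining formula carries an additional $+2\ppar^{k}$ at positions $k\notin S$ with $k-1\in S$; since $-1\notin S$ by convention, no such extra term appears at $k=0$, so the residue modulo $\ppar$ comes out cleanly as claimed. Combining the two parts gives $0\in\Dset$.
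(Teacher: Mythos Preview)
Your argument is correct and follows essentially the same route as the paper's proof: both track the residue of $v$ modulo $\ppar$ (equivalently, the zeroth digit) along a quiver path from $e$ and observe that it stays in $\{e,\ppar-e\}$, hence is nonzero. Your treatment is somewhat more explicit in separating out the condition $j\geq 1$ and in checking that the $+2\ppar^{k}$ correction in the upward reflection does not affect the zeroth digit, but the underlying idea is the same.
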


\begin{proof}
Any $v\in \block{e}{\ppar}$ can be reached from $e$ through a finite sequence of
$\Down{i}$ and $\Up{i}$. It is straightforward to check that under each such
an arrow the zeroth digit stays unchanged or is reflected to its negative.
The zeroth digit of any $v$ is, thus, either $e$ or $\ppar-e$. Unless $v=e$, this implies $0\in\Dset$.
\end{proof}

\begin{lemma}\label{lemma:prodzero}
For $e\in\eve^{<\ppar}$ and $v,w\in \block{e}{\ppar}\setminus\{e\}$ we have $\loopdown{}{v}\loopdown{}{w}=0$.
\end{lemma}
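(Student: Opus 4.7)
The plan is to expand both elements using \fullref{definition:Lv}, exploit idempotent orthogonality to localise at a single vertex, and then use the fact that the local endomorphism algebra is a product of dual numbers with a common square-zero generator coming from the zeroth digit.

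First, I would write
\begin{gather*}
\loopdown{}{v}\loopdown{}{w}
=
{\textstyle\sum_{v^{\prime}\in\Cset,w^{\prime}\in\Cset[w]}}
\,\loopdown{}{\Dset}\pjw[v^{\prime}{-}1]\cdot\loopdown{}{\setstuff{D}(w)}\pjw[w^{\prime}{-}1].
\end{gather*}
Since $\loopdown{}{\Dset}\pjw[v^{\prime}{-}1]\in\pjw[v^{\prime}{-}1]\zigzagy\pjw[v^{\prime}{-}1]$ and similarly for $w^{\prime}$, the relations $\pjw[v^{\prime}{-}1]\pjw[w^{\prime}{-}1]=\delta_{v^{\prime},w^{\prime}}\pjw[v^{\prime}{-}1]$ from \fullref{theorem:main-tl-section}.(1) collapse the double sum to one indexed by common vertices $u\in\Cset\cap\Cset[w]$. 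If this intersection is empty the claim is immediate, so assume $u$ lies in both.

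Next, I would invoke \fullref{corollary:Cv} to conclude $u\geq v>e$ and $u\geq w>e$, so that $u\in\block{e}{\ppar}\setminus\{e\}$. Applying \fullref{lemma:zerodigit} to $v$ and to $w$ gives $0\in\Dset\cap\setstuff{D}(w)$. Thus in the local factorisation
\begin{gather*}
\loopdown{}{\Dset}\pjw[u{-}1]
=
{\textstyle\prod_{i\in\Dset}}\loopdown{}{i}\pjw[u{-}1],
\quad
\loopdown{}{\setstuff{D}(w)}\pjw[u{-}1]
=
{\textstyle\prod_{i\in\setstuff{D}(w)}}\loopdown{}{i}\pjw[u{-}1],
\end{gather*}
both products share the factor $\loopdown{}{0}\pjw[u{-}1]$, which is well-defined because the zeroth digit of $u$ is either $e$ or $\ppar-e$ (by the argument in \fullref{lemma:zerodigit}), hence nonzero, and $u$ is not an eve so $\hull[\{0\}]$ exists at $u$.

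Finally, I would conclude using \fullref{lemma:dualnumbers}: the algebra $\End_{\tilt}\big(\obstuff{T}(u-1)\big)=\pjw[u{-}1]\zigzagy\pjw[u{-}1]$ is commutative, and $\loopdown{}{0}\pjw[u{-}1]$ is a scalar multiple of $\loopdown{\hull[\{0\}]}{u{-}1}$, which itself equals a product of $\loopdown{S_{k}}{u{-}1}$ over the minimal down-admissible stretches $S_{k}\subset\hull[\{0\}]$. Since each $(\loopdown{S_{k}}{u{-}1})^{2}=0$, commuting the shared factor together gives $(\loopdown{}{0}\pjw[u{-}1])^{2}=0$, and so each summand vanishes. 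The main subtlety (and the only step requiring care) is bookkeeping the scalar constants and verifying that every common vertex $u$ really does inherit the zeroth digit from the block $\block{e}{\ppar}$, so that the shared square-zero generator $\loopdown{}{0}$ is present on both sides.
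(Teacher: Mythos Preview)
Your proof is correct and follows essentially the same route as the paper's: reduce to common vertices via idempotent orthogonality, invoke \fullref{lemma:zerodigit} to place $0$ in both $\Dset$ and $\setstuff{D}(w)$, and then kill the local product using the square-zero relation from \fullref{lemma:dualnumbers}. The paper's version is terser (it simply notes that on each $z\in\Cset\cap\Cset[w]$ the product is a multiple of $(\loopdown{}{0})^{2}\pjw[z{-}1]=0$), while you spell out why $\loopdown{}{0}$ is well-defined at $u$ and why its square vanishes; this extra care is fine but not strictly needed, since $\hull[\{0\}]$ is already a single minimal down-admissible stretch at $u$.
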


\begin{proof}
By \fullref{lemma:zerodigit} we have $0\in\Dset$ for any $v\in\block{e}{\ppar}$
with $v\neq e$. If $\Cset\cap \Cset[w]=\emptyset$, then we have
$\loopdown{}{v}\loopdown{}{w}=0$ trivially. Otherwise the product
$\loopdown{}{v}\loopdown{}{w}$ is supported on certain $z\in\Cset\cap\Cset[w]$,
but there it is a multiple of $\loopdown{2}{0}\pjw[w{-}1]=0$, see
\fullref{lemma:dualnumbers}.
\end{proof}

Next will be that the $\loopdown{}{v}$ are central, which needs:

\begin{lemma}\label{lemma:tech}
Let $v\in\block{e}{\ppar}$ and $i,k\in\Dset$ with $i\neq k$ and $i\in\Dset[{v[k]}]$. Then we have:
\begin{gather}
\label{eq:centralone}
\pjw[v{-}1]\loopdown{}{i}
\Up{k}\pjw[{v[k]{-}1}]
=\pjw[v{-}1]\Up{k} 
\loopdown{}{i}\pjw[{v[k]{-}1}]\neq 0,
\\
\label{eq:centraltwo}
\pjw[v{-}1]\loopdown{}{i}
\Up{i}\pjw[{v[i]{-}1}]
=\pjw[v{-}1]\Up{i} 
\loopdown{}{i}\pjw[{v[i]{-}1}]=0.
\end{gather}
In particular, a loop of type $\loopdown{}{i}$ can be transported along $\Up{k}$, and
thus, also along $\Down{k}$, due to symmetry. 
\end{lemma}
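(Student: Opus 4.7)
The plan is to prove both displayed equations by direct computation using the presentation of $\zigzagy$ in \fullref{theorem:main-tl-section} and the vanishing identities \eqref{eq:DUD} of \fullref{lemma:dualnumbers2}. A key preliminary observation I would establish is that the hulls $\hull[{\{i\}}]$ and $\hull[{\{k\}}]$ are disjoint: since $a_i, a_k \neq 0$ and hulls only extend through zero digits, neither $i\in\hull[{\{k\}}]$ nor $k\in\hull[{\{i\}}]$. This means the products of generators behave well relative to the (Basis) decomposition.

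For equation \eqref{eq:centraltwo}, I would expand $\loopdown{}{i}$ as a scalar multiple of $\Up{i}\Down{i}$ on the appropriate side. This reduces the two sides to compositions of the form $\Up{S}\Down{S}\Up{S}$ and $\Up{S}\Up{S}\Down{S}$ with $S=\hull[{\{i\}}]$. The first vanishes by the second DUD identity in \eqref{eq:DUD} of \fullref{lemma:dualnumbers2}, while the second vanishes by the containment relation \fullref{theorem:main-tl-section}.(2), which gives $\Up{S}\Up{S}=0$.

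For equation \eqref{eq:centralone}, I would proceed by case analysis on $\dist(\hull[{\{i\}}], \hull[{\{k\}}])$. If the distance exceeds $1$, far-commutativity (\fullref{theorem:main-tl-section}.(3)) yields $\Up{k}\Up{i}=\Up{i}\Up{k}$ and $\Up{k}\Down{i}=\Down{i}\Up{k}$, so the commutation of $\loopdown{}{i}$ with $\Up{k}$ is immediate, and both sides are a nonzero scalar multiple of a single basis element of the form $\pjw[v{-}1]\Up{\hull[{\{k\}}]}\Up{\hull[{\{i\}}]}\Down{\hull[{\{i\}}]}\pjw[{v[k]{-}1}]$ (reordered according to whether $k>i$ or $k<i$), which is nonzero by (Basis) in \fullref{theorem:main-tl-section}. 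When the distance equals $1$, I would apply the adjacency relations \fullref{theorem:main-tl-section}.(4), which rewrite the arrow compositions while introducing a scalar factor $\funcH$. The main obstacle is to verify, in this adjacent case, that the extra $\funcH$ factor precisely balances the potential discrepancy between the defining scalars $(-1)^{a_i}a_i$ of $\loopdown{}{i}$ at $v$ and at $v[k]$ (a discrepancy that can arise from borrowing when $k<i$ and the digits between $k$ and $i$ vanish); this requires a direct but localized computation using the explicit formula for $\funcH$ in terms of the digit $a_{\max(\hull[{\{i\}}])+1}$, together with the constraint $i\in\Dset[{v[k]}]$ to ensure everything is well-defined.
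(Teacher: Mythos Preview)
Your overall architecture matches the paper's proof: case analysis on the relative position of the two hulls, using \eqref{eq:DUD} for \eqref{eq:centraltwo} and the quiver relations for \eqref{eq:centralone}. However, there are two places where your plan underestimates the work, and where following it literally would not close the argument.

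\textbf{The hulls at $v$ and at $v[k]$ need not coincide.} In \eqref{eq:centraltwo} you write the right-hand side as $\Up{S}\Up{S}\Down{S}$ with a single $S=\hull[{\{i\}}]$ computed at $v$, and kill it by containment. But the loop $\loopdown{}{i}\pjw[{v[i]{-}1}]$ is built from the hull $T_{i}:=\hull[{\{i\}}]$ for $v[i]$, which can differ from $S_{i}:=\hull[{\{i\}}]$ for $v$. Concretely, if $S_{i}=\{i\}$ and $a_{i+1}=1$, then the $(i{+}1)$st digit of $v[i]$ is zero, so $T_{i}\supsetneq S_{i}$. In that situation the composite is $\Up{S_{i}}\Up{T_{i}}\Down{T_{i}}$ with $S_{i}\subsetneq T_{i}$, and containment gives $\Up{T_{i}}\Up{S_{i}}=0$ but says nothing about $\Up{S_{i}}\Up{T_{i}}$. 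The paper resolves this by first applying the overlap relation \fullref{theorem:main-tl-section}.(5) to rewrite $\Up{S_{i}}\Up{T_{i}}$ as $\Up{T_{i}\setminus S_{i}}\Up{S_{i}}\Down{S_{i}}$, after which $\Down{S_{i}}\Down{T_{i}}=0$ by containment. The same issue (possible $T_{i}\neq S_{i}$) should also be tracked in your treatment of \eqref{eq:centralone}.

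\textbf{The adjacent case with $S_{k}>S_{i}$ needs more than the adjacency relations.} You propose to handle $\dist(S_{i},S_{k})=1$ uniformly via \fullref{theorem:main-tl-section}.(4) and a scalar bookkeeping with $\funcH$. This works cleanly when $S_{i}>S_{k}$ (this is the only case where the loop scalar actually changes, since then $b_{i}=a_{i}-1$, and the paper's computation shows the adjacency scalar $\funcg(a_{i}-1)$ exactly accounts for the ratio $(-1)^{a_{i}}a_{i}/(-1)^{b_{i}}b_{i}$). But when $S_{k}>S_{i}$, one has $b_{i}=a_{i}$ so there is no scalar discrepancy, yet the manipulation is \emph{not} a consequence of (4) alone: after one application of (4) you are left with $\Down{S_{i}}\Up{S_{i}}$ at $v[k]$, which must be converted to $\Up{S_{i}}\Down{S_{i}}$. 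If $a_{k}\neq 1$ this requires the zigzag relation \fullref{theorem:main-tl-section}.(6), producing a $\funcG$-term (which combines with the preceding $\funcH$ to give $1$) and an $\funcF$-term (killed by containment). If $a_{k}=1$ then $S_{i}$ is not even up-admissible for $v[k]$, and one must instead use the overlap relation \fullref{theorem:main-tl-section}.(5). Your plan does not account for either of these mechanisms.
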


\begin{proof} 
We rewrite both sides of 
the desired equations and introduce notation as
follows. Here we allow the case $k=i$, for which we have to prove that both sides are zero.
\begin{align*}
\loopdown{}{i}\Up{k} 
\pjw[{v[k]{-}1}] 
&=  
(-1)^{a_{i}}a_{i}\Up{S_{i}}\Down{S_{i}}\Up{S_{k}}
\pjw[{v[k]{-}1}] 
\\
\Up{k} \loopdown{}{i}
\pjw[{v[k]{-}1}] 
&= 
(-1)^{b_{i}}b_{i} 
\Up{S_{k}} 
\Up{T_{i}}\Down{T_{i}}\pjw[{v[k]{-}1}],
\end{align*} 
where $S_{i}=\overline{\{i\}}$ 
and $S_{k}=\overline{\{k\}}$ are the
down-admissible hulls for $v$, 
$T_{i}:=\overline{\{i\}}$ is the
down-admissible hull for $v[k]$, 
and $b_{i}$ denotes the $i$th digit of
$v[k]$. In checking \eqref{eq:centralone} and \eqref{eq:centraltwo}, there are four cases to consider.
\begin{enumerate}[label=$\bullet$]

\setlength\itemsep{0.15cm}

\item If $S_{i}$ and $S_{k}$ are distant, then $T_{i}=S_{i}$ and $a_{i}=b_{i}$, and
\eqref{eq:centralone} follows from far-commutativity 
\fullref{theorem:main-tl-section}.(3).

\item If $S_{k}>S_{i}$ are adjacent, then we have $a_{i}=b_{i}$ and $S_{i}$ is
down-admissible for $v[k]$, which implies $S_{i}=T_{i}$. Now we distinguish two cases. Suppose $a_{k}\neq 1$, then we compute
\begin{align*}
\Up{S_{i}}\Down{S_{i}}\Up{S_{k}}\pjw[{v[k]{-}1}] 
&= \Up{S_{i}}\Up{S_{k}} \Up{S_{i}} \pjw[{v[k]{-}1}] 
\\
&= 
\funcg(a_{k}-1)\Up{S_{k}} \Down{S_{i}}\Up{S_{i}}\pjw[{v[k]{-}1}]
\\
&= 
\funcg(a_{k}-1)
\Up{S_{k}}\Big(\funcg(\ppar-a_{k}) 
\Up{S_{i}} \Down{S_{i}} 
+ 
\funcf(\ppar-a_{k})\Up{S_{k}}\Up{S_{i}}\Down{S_{i}}\Down{S_{k}}\Big) \pjw[{v[k]{-}1}] 
\\
&= 
\Up{S_{k}} \Up{S_{i}}\Down{S_{i}}\pjw[{v[k]{-}1}]  
=\Up{S_{k}} \Up{T_{i}} \Down{T_{i}}\pjw[{v[k]{-}1}],
\end{align*}
as the $\funcf$ term gets killed by containment 
\fullref{theorem:main-tl-section}.(2) and we have
$\funcg(a_{k}-1)\funcg(\ppar-a_{k})=1$, because $a_{k}\neq 1$. 

\noindent Now suppose $a_{k}=1$. Then we compute:
\begin{align*}
\Up{S_{i}}\Down{S_{i}}\Up{S_{k}}\pjw[{v[k]{-}1}] 
&= 
\Up{S_{i}}\Up{S_{k} \cup \{k-1\}} \Up{S_{i}\setminus\{k-1\}}  \pjw[{v[k]{-}1}] 
\\
&= 
\Up{S_{k}}\Up{S_{i}}\Down{\{k-1\}}\Up{S_{i}\setminus\{k-1\}}  \pjw[{v[k]{-}1}] 
\\
&= 
\Up{S_{k}}\Up{S_{i}} \Down{S_{i}}\pjw[{v[k]{-}1}],
\end{align*}
where we have used the overlap relation \fullref{theorem:main-tl-section}.(5).
In either case, we deduce \eqref{eq:centralone} since $a_{i}=b_{i}$. 

\item Suppose that $S_{i}>S_{k}$ are adjacent. Then we have $a_{i}-1=b_{i}$. By the
admissibility assumption we have $b_{i}\neq 0$, which implies $T_{i}=S_{i}$, and we
compute
\begin{align*}
\Up{S_{k}} \Up{T_{i}}\Down{T_{i}}\pjw[{v[k]{-}1}] 
&=
\Up{S_{k}} \Up{S_{i}}\Down{S_{i}}\pjw[{v[k]{-}1}]
\\ 
&= 
\funcg(a_{i}-1) \Up{S_{i}} \Down{S_{k}}\Down{S_{i}}\pjw[{v[k]{-}1}]
\\
&= 
\funcg(a_{i}-1) \Up{S_{i}}\Down{S_{i}}\Up{S_{k}}\pjw[{v[k]{-}1}].
\end{align*}
Here we observe $\funcg(a_{i}-1)= 
-\frac{a_{i}}{a_{i}-1}=\frac{(-1)^{a_{i}} a_{i}}{(-1)^{b_{i}}b_{i}}$, 
which verifies \eqref{eq:centralone}.

\item Finally, if $S_{i}=S_{k}$, then we have $\Up{S_{i}}\Down{S_{i}}\Up{S_{i}}\pjw[{v[k]{-}1}]=0$
by \eqref{eq:DUD}. For the other side of the equation we consider $\Up{S_{i}}\Up{T_{i}}\Down{T_{i}}\pjw[{v[k]{-}1}]$. 
If $T_{i}=\{i\}$, then $S_{i}\supset T_{i}$ and 
the expression is zero by the
containment relation \fullref{theorem:main-tl-section}.(2). Otherwise we necessarily have $S_{i}=\{i\}\subset T_{i}$
and we use the overlap relation \fullref{theorem:main-tl-section}.(5) to get
\begin{gather*} 
\Up{S_{i}}\Up{T_{i}}\Down{T_{i}}\pjw[{v[k]{-}1}]
= 
\Up{T_{i}\setminus S_{i}}\Up{S_{i}}\Down{S_{i}}\Down{T_{i}} \pjw[{v[k]{-}1}]
=0,
\end{gather*} 
where we have again used containment at the end. Hence, \eqref{eq:centraltwo} holds.
\end{enumerate}\vspace{-.6cm}
\end{proof}

\begin{lemma}\label{lemma:loopscentral} 
We have the following:
\begin{enumerate}[label=(\alph*)]

\setlength\itemsep{0.15cm}

\item If $z\in\aZ(\zigzagc_{e{-}1})$ 
satisfies $z\pjw[v{-}1]=c
\loopdown{}{v}\pjw[v{-}1]$ 
for some $v\in\block{e}{\ppar}\setminus\{e\}$ and $c\in\K$, then $z\pjw[w{-}1]$ also contains 
$\loopdown{}{v}\pjw[w{-}1]$ with
coefficient $c$ for any $w\in\Cset$.

\item For $v\in\block{e}{\ppar}\setminus\{e\}$ we have $\loopdown{}{v}\in\aZ(\zigzagc_{e{-}1})$. 
\end{enumerate}
\end{lemma}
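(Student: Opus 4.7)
The plan is to establish (b) first, and then to use centrality together with the (Basis) assertion of \fullref{theorem:main-tl-section} to pin down (a).

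For (b), I would verify that $\loopdown{}{v}$ commutes with every generator of $\zigzagc_{e{-}1}$: the idempotents $\pjw[w{-}1]$ and the up- and down-arrows $\Up{k}\pjw[w{-}1]$, $\Down{k}\pjw[w{-}1]$. Commutation with idempotents is immediate from \eqref{eq:maxloop}. For $\Up{k}\pjw[w{-}1]$ (the down case is symmetric) I would split on whether $w$ and $w(k)$ lie in $\Cset$. If both do, then the structure of $\sim_{\Dset}$ forces $k\notin\Dset$, and
\begin{gather*}
\loopdown{}{v}\Up{k}\pjw[w{-}1]
=\loopdown{}{\Dset}\Up{k}\pjw[w{-}1]
=\Up{k}\loopdown{}{\Dset}\pjw[w{-}1]
=\Up{k}\pjw[w{-}1]\loopdown{}{v},
\end{gather*}
with the middle equality obtained by transporting each factor $\loopdown{}{i}$ of $\loopdown{}{\Dset}$ past $\Up{k}$ via \eqref{eq:centralone}. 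If $w\in\Cset$ but $w(k)\notin\Cset$, a short digit-level check using \fullref{lemma:charCv} (together with the fact that up-reflection at $k$ only changes positions $k$ and $k{+}1$) shows that necessarily $k\in\Dset$; then \eqref{eq:centraltwo} gives $\Up{k}\loopdown{}{\Dset}\pjw[w{-}1]=0$, matching the vanishing on the other side. The remaining cases are handled symmetrically.

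For (a), I would induct on the length of a $\Dset$-path from $v$ to $w$ in $\Cset$; the base case is trivial. For the inductive step, let $w$ be reached from $w'\in\Cset$ by a single arrow, which by symmetry may be taken to be $\Down{k}\pjw[w{-}1]\in\pjw[w'{-}1]\zigzagc\pjw[w{-}1]$ with $k\notin\Dset$. Centrality of $z$ applied to this arrow gives
\begin{gather*}
(z\pjw[w'{-}1])\cdot\Down{k}\pjw[w{-}1]
=\Down{k}\cdot(z\pjw[w{-}1]).
\end{gather*}
On the left, the inductive hypothesis contributes $c\loopdown{}{\Dset}\Down{k}\pjw[w{-}1]$ after sliding $\Down{k}$ through each $\loopdown{}{i}$ via \eqref{eq:centralone}. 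On the right, expanding $z\pjw[w{-}1]$ in the basic-loops basis of $\End_{\tilt}\bigl(\obstuff{T}(w{-}1)\bigr)$ from \fullref{lemma:dualnumbers}, any summand whose index set contains a stretch meeting position $k$ is killed by \eqref{eq:DUD}, while the remaining summands slide past $\Down{k}$ via \fullref{lemma:tech}. The resulting equation lives in $\pjw[w'{-}1]\zigzagc\pjw[w{-}1]$, whose (Basis) from \fullref{theorem:main-tl-section} gives linear independence, and comparing coefficients on $\loopdown{}{\Dset}\Down{k}\pjw[w{-}1]$ forces the coefficient of $\loopdown{}{\Dset}\pjw[w{-}1]$ in $z\pjw[w{-}1]$ to equal $c$, completing the induction.

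The main obstacle is the bookkeeping of the scalars $(-1)^{a_{i}}a_{i}$ entering the definition of $\loopdown{}{i}$ under the sliding moves, together with identifying the relevant down-admissible hulls $\overline{\{i\}}$ consistently at $v$, $w'$ and $w$. Here \fullref{lemma:charCv} is exactly what is needed: it ensures that elements of $\Cset$ share the digits $a_{i}$ for $i\in\Dset$ with $v$, so the hulls (which depend only on the zero/non-zero pattern of those digits) are intrinsic to $\Dset$, and the normalization of $\loopdown{}{\Dset}$ is invariant along $\Dset$-paths. Consequently the coefficient $c$ is transported without any spurious rescaling, and the two isomorphisms combine as claimed.
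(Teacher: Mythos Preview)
Your proposal is correct and follows essentially the same approach as the paper: both arguments reduce (b) to a case split on arrows (those internal to $\Cset$, those leaving $\Cset$, and those disjoint from it) handled via \eqref{eq:centralone} and \eqref{eq:centraltwo}, and both obtain (a) by transporting the loop $\loopdown{}{\Dset}$ along $\Dset$-paths using \fullref{lemma:tech}. The only differences are cosmetic---you swap the order of (a) and (b) and spell out the induction for (a), whereas the paper compresses (a) to a single sentence---and your digit-level justification that an arrow leaving $\Cset$ must have $k\in\Dset$ is exactly what underlies the paper's terse ``$k\leq j$''.
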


\begin{proof}
For the first part, expand $\loopdown{}{v}$ into a product of basic loops and
observe that the $w\in\Cset$ are 
precisely the vertices to which
$\loopdown{}{v}$ can be transported using \fullref{lemma:tech}. In a central
element, these loops $\loopdown{}{v}$ therefore have to appear with
the same coefficient at any $w\in\Cset$.

For the second part, we write $v=\pbase{a_{j},\dots,a_{0}}{\ppar}$ and observe
that every arrow $\Up{k}$ or $\Down{k}$ in the quiver, for which we write $Y_{k}$,
is one of the following:
\begin{enumerate}[label=(\roman*)]

\setlength\itemsep{0.15cm}

\item $Y_{k}$ is not adjacent to any $w\in\Cset$, in which case it trivially commutes:
\begin{gather*}
Y_{k}\loopdown{}{v}=0,
\quad\loopdown{}{v}Y_{k}=0.
\end{gather*}

\item $Y_{k}$ is adjacent to an $w\in\Cset$, but not a generator for the equivalence
relation $\sim_{\Dset}$. In this case \fullref{lemma:charCv} implies $k\leq j$
and commutation follows from \eqref{eq:centraltwo}.

\item $Y_{k}$ is a generating 
arrow of the equivalence relation
$\sim_{\Dset}$, in which case 
it commutes with the loops of type $\loopdown{}{\Dset}$ by
\eqref{eq:centralone}.\qedhere

\end{enumerate}
\end{proof}

The following example shows that the assumption $i\in\Dset[{v[k]}]$ in \fullref{lemma:tech} is necessary.

\begin{example}\label{example:twelve}
Let $\ppar=3$. Then there is no element in $z\in\aZ(\zigzagc_{0})$ such that $z\pjw[12]=\Up{1}\Down{1}\pjw[12]$.
Indeed, we can compute 
\begin{gather*}
z\pjw[10]\Down{0}\pjw[12]=\Down{0}z\pjw[12] = \Down{0}\Up{1}\Down{1}\pjw[12]=\Up{1,0}\Down{1}\pjw[12] 
\end{gather*}
However, $\Up{1,0}\Down{1}\pjw[12]\neq z\pjw[10]\Down{0} \pjw[12]$ for any $z\pjw[10]$, as is easily verified since the endomorphisms
of $\pjw[10]$ are spanned by $\pjw[10]$ and $\Up{0}\Down{0}\pjw[10]$.

Neither is there $z\in\aZ(\zigzagc_{0})$ such that 
$z\pjw[16]=\Up{1}\Down{1}
\pjw[16]$, as such a loop, if 
central, could be transported to $\pjw[12]$.
\end{example}

Finally, we show that
$\loopdown{}{v}$, together with the unit, form a basis of the center.

\begin{lemma}\label{lemma:loopscentral-op}
The center $\aZ(\zigzagc_{e{-}1})$ has a basis given by the unit ({\cf}
\eqref{eq:unit}) and the $\loopdown{}{v}$ for $v\in\block{e}{\ppar}$ with $v\neq e$. 
\end{lemma}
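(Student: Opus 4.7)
The plan is to prove that every central element lies in the $\K$-span of the unit \eqref{eq:unit} and of the $\loopdown{}{v}$ for $v\in\block{e}{\ppar}\setminus\{e\}$. Combined with linear independence of these elements (the unit is supported on every idempotent $\pjw[w-1]$, while each $\loopdown{}{v}$ is supported on $\Cset$ and its $\pjw[v-1]$-component is the maximal loop $\loopdown{}{\Dset}\pjw[v-1]$, which is a basis vector of the local algebra distinct from $\pjw[v-1]$), this gives the asserted basis.

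Fix $z\in\aZ(\zigzagc_{e-1})$. As in the opening of the proof of \fullref{theorem:center-algebra}, write $z=\sum_{v\in\block{e}{\ppar}} z_v$ with $z_v=\pjw[v-1]z\pjw[v-1]$, and expand each $z_v$ uniquely in the basis from \fullref{lemma:dualnumbers}:
\begin{gather*}
z_v=\sum_{T\subset\Dset} c^v_T\,\loopdown{}{T}\pjw[v-1],
\end{gather*}
where we identify subsets of the minimal down-admissible stretches of $v$ with subsets $T\subset\Dset$. Since the local algebras are commutative, centrality of $z$ reduces to the off-diagonal conditions $Yz_v=z_wY$ for every generating arrow $Y\in\{\Up{k},\Down{k}\}$ between neighbors $v-1, w-1$. \fullref{lemma:tech} says that such an arrow transports a loop $\loopdown{}{i}$ with $i\neq k$ across $Y$ whenever $i\in\Dset\cap\Dset[w]$, and kills it when $i=k$.

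This dichotomy would yield three disjoint contributions to $z$, extracted in order. First, the constant coefficients $c^v_\emptyset$: transport along arbitrary arrows shows they are constant on $\block{e}{\ppar}$, contributing a scalar multiple of the unit. Second, the maximal-loop coefficients $c^v_{\Dset}$: by \fullref{lemma:loopscentral}(a) they are constant on each equivalence class $\Cset$, are otherwise unconstrained, and package into $\sum_v \beta_v\loopdown{}{v}$. Third, and this is the main work, all intermediate coefficients $c^v_T$ with $\emptyset\subsetneq T\subsetneq\Dset$ must vanish.

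The hard part is this third step, for which I would argue by contradiction in the spirit of \fullref{example:twelve}. Given a non-zero $c^v_T$, the goal is to locate an arrow $Y_k$ adjacent to $v-1$ along which $\loopdown{}{T}$ fails the transport hypothesis of \fullref{lemma:tech}, namely with some $i\in T$ satisfying $i\notin\Dset[{v[k]}]$ (respectively $\Dset[{v(k)}]$). Such $k$ are produced by the $\ppar$-adic borrowing implicit in the reflections $v[k]$ and $v(k)$: reflecting the non-zero digit at position $k\in\Dset\setminus T$ can force the neighboring digit $a_{k+1}$ into $0$ after renormalization, removing $k+1$ from the non-leading support. Pushing $\loopdown{}{T}\pjw[v-1]$ across $Y_k$ then produces at $w$ a basis element of the form (Basis) in \fullref{theorem:main-tl-section} with a persistent adjacent pair $\Up{}\Down{}$ or $\Down{}\Up{}$, which cannot lie in the image of $\pjw[w-1]\zigzagc\pjw[w-1]$ acting by right multiplication with $Y_k$, contradicting centrality. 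The bookkeeping heart of the argument is to verify that for every $T\subsetneq\Dset$ with $T\neq\emptyset$ such a $k$ can indeed be chosen; I would handle this by induction on the minimal $k\in\Dset\setminus T$ (or by a finite case analysis on the digit pattern around $k$), paralleling \fullref{example:twelve} in form but carried out for arbitrary $v$.
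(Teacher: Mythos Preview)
Your trichotomy in the spanning argument contains a genuine error: the claim that all intermediate coefficients $c^v_T$ with $\emptyset\subsetneq T\subsetneq\Dset$ must vanish is false. For $\ppar=3$, the central element $\loopdown{}{7}$ (where $7=\pbase{2,1}{3}$, $\Dset[7]=\{0\}$) has $\loopdown{}{7}\pjw[12]$ equal to a nonzero multiple of $\loopdown{}{\{0\}}\pjw[12]$; since $13=\pbase{1,1,1}{3}$ has $\Dset[13]=\{0,1\}$, this is a genuinely intermediate loop at $v=13$. The underlying issue is that $\loopdown{}{v}$, evaluated at $w\in\Cset$ with $w>v$, yields $\loopdown{}{\Dset}\pjw[w-1]$, which is typically \emph{not} the maximal loop at $w$ because $\Dset\subsetneq\Dset[w]$. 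So your step two does not correctly ``package'' the central elements, and the intermediate residue in step three does not vanish. Your proposed obstruction mechanism also fails concretely: for $T=\{0\}$ no such $k$ exists, since by \fullref{lemma:zerodigit} every neighbor $w\neq e$ satisfies $0\in\Dset[w]$. This is no accident---it is exactly why the $\{0\}$-loop \emph{can} be transported and participate in a central element.

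The paper's proof circumvents this by working at the \emph{minimal} vertex $v$ in the support of $z$. At that vertex, and only there, the component $z\pjw[v-1]$ is forced to be a scalar multiple of the maximal loop: if it contained any summand $\loopdown{}{S}\pjw[v-1]$ with $S\subsetneq\Dset$, then picking $i\in\Dset\setminus S$ and applying $\Down{i}$ produces (after a short computation with the adjacency and far-commutativity relations) a nonzero element supported at the strictly smaller vertex $v[i]$, contradicting minimality. One then subtracts the central element $c\,\loopdown{}{v}$, whose support lies in $\{w\geq v\}$ by \fullref{corollary:Cv}, and repeats. The key point you are missing is that the obstruction is not ``the loop cannot be matched at a neighbor'' but rather ``the loop survives transport to a \emph{smaller} vertex'', which is only a contradiction once minimality is in play.
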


\begin{proof}
To see linear independence, first note that no linear combination of
the (nilpotent) $\loopdown{}{v}$ can be the unit. Moreover, assuming that
$\sum_{i=1}^{r}a_{i}\loopdown{}{v_{i}}=0$ for $a_{i}\in\K$ and
$v_{i}<v_{j}$ for $i<j$, we can multiply this equation with $\pjw[v_{1}{-}1]$ and
get
$a_{1}\loopdown{}{\Dset[v_{1}]}\pjw[v_{1}{-}1]=a_{1}\loopdown{}{v_{1}}\pjw[v_{1}{-}1]=\sum_{i=1}^{r}a_{i}\loopdown{}{v_{i}}\pjw[v_{1}{-}1]=0$.
Hence, $a_{1}=0$, since $\loopdown{}{\Dset[v_{1}]}\pjw[v_{1}{-}1]$ is a basis
element in the corresponding endomorphism ring. We can repeat this process to
show that all $a_{i}$ are zero.

Suppose now we are given $z\in\aZ(\zigzagc_{e{-}1})$. Let $v\in\block{e}{\ppar}$ be
minimal with $z\pjw[v{-}1]\neq 0$. If $v=e$, then $z\pjw[v{-}1]=c\cdot\pjw[v{-}1]$ for some non-zero scalar $c\in\K$. In this case, we proceed with
the central element $z-c\cdot 1$ in place of $z$.

Now we assume that $v\neq e$ and claim that $z\pjw[v{-}1]=c\cdot
\loopdown{}{v}\pjw[v{-}1]$ 
for some non-zero scalar $c\in\K$ since otherwise we could find a smaller $v$. 

To see this, suppose that $z\pjw[v{-}1]$ contains a summand $d\cdot\loopdown{}{S} \pjw[v{-}1]$ and suppose there is an $i\in\Dset\setminus S$.
We write $S_i:=\overline{\{i\}}$ 
for the admissible hull and split
$S=S_{>i}\cup S_{<i}$ into subsets 
of indices greater and smaller than $i$
respectively. Then $z\Down{i}\pjw[v{-}1]=\Down{i}z\pjw[v{-}1]$ contains a summand
\begin{align*}
d\cdot\Down{S_i}\loopdown{}{S}\pjw[v{-}1] 
&= 
d\cdot\Down{S_i} \Up{S_{>i}}\Up{S_{<i}}\Down{S_{<i}}\Down{S_{>i}}\pjw[v{-}1]
\\
&=
\begin{cases}
d\cdot\Up{S_{>i}\cup S_i}\Up{S_{<i}}\Down{S_{<i}}\Down{S_{>i}}\pjw[v{-}1] 
&  
\text{if }S_{>i}\text{ is adjacent to }i,
\\
d\cdot \Up{S_{>i}}\Up{S_{<i}}\Down{S_{<i}}\Down{i}\Down{S_{>i}}\pjw[v{-}1] 
&
\text{if }S_{>i}\text{ is distant to }i,
\\
\end{cases}
\end{align*}
where we have used the adjacency 
relation \fullref{theorem:main-tl-section}.(4) in the first case, the
far-commutativity relation \fullref{theorem:main-tl-section}.(3) in the 
second case, and also that $\Down{S_i}$
commutes with $\Up{S_{<i}}\Down{S_{<i}}$, 
as verified in the proof of
\fullref{lemma:tech}. In either case the 
result is non-zero. A similar
argument now shows that this term 
can not be canceled by other summands of
$\Down{i}z\pjw[v{-}1]$. Thus, we have $z\Down{i} \pjw[v{-}1]\neq 0$, in
contradiction to the minimality of $v$.

Now define $z^{\prime}=z-c\loopdown{}{v}$. By
\fullref{lemma:loopscentral} we have $z^{\prime}\in\aZ(\zigzagc_{e{-}1})$ and by
\fullref{corollary:Cv} we know that $z^{\prime}$ is supported at vertices $w>v$. Now
proceed by induction.
\end{proof}

\section{The center of the tilting category}\label{section:center-tilt}

We now explain how $\aZ(\zigzagc)$ and $\cZ(\tilt)$ 
are related.

\subsection{Some general (and well-known) facts}\label{subsection:facts}

Let $\kk$ denote any commutative unital ring.
For an additive $\kk$-linear category $\catstuff{C}$ we let
$\catstuff{End}(\catstuff{C})$ denote its category of $\kk$-linear endofunctors
(which are thus automatically additive) and natural transformations. Within this category we have the
$\kk$-algebra of endomorphism of any endofunctor $\morstuff{F}$ denoted by
$\setstuff{End}_{\catstuff{End}(\catstuff{C})} (\morstuff{F})$. Recall the
following definition.

\begin{definition}
Let $\catstuff{C}$ be an additive $\kk$-linear category. Then 
its center $\cZ(\catstuff{C})$ is defined as
\begin{gather*}
\cZ(\catstuff{C})
=
\setstuff{End}_{\catstuff{End}(\catstuff{C})}
(\morstuff{Id}_{\catstuff{C}})
,
\end{gather*}
{\ie} the natural transformations of the identity functor $\morstuff{Id}_{\catstuff{C}}$
of $\catstuff{C}$.
\end{definition}

\begin{lemma}\label{lemma:cat-center}
We have the following.
\begin{enumerate}[label=(\alph*)]

\setlength\itemsep{0.15cm}	

\item The space $\cZ(\catstuff{C})$ is a commutative $\kk$-algebra.

\item If $\catstuff{C}$ and $\catstuff{D}$ are equivalent additive $\kk$-linear categories, then 
$\cZ(\catstuff{C})\cong\cZ(\catstuff{D})$
as commutative $\kk$-algebras.

\item If
$\catstuff{C}\cong\prod_{i\in I}\catstuff{C}_{i}$ as additive $\kk$-linear
categories with trivial hom spaces between $\catstuff{C}_{i}$ and
$\catstuff{C}_{j}$ unless $i=j$, then
$\cZ(\catstuff{C})\cong
{\textstyle\prod_{i\in I}}\cZ(\catstuff{C}_{i})$
as commutative $\kk$-algebras.

\end{enumerate}
\end{lemma}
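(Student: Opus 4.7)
The plan is to verify the three statements in turn, as each is a standard fact about categorical centers; the main technique in (a) is the Eckmann--Hilton type interchange, while (b) and (c) are essentially bookkeeping.

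For part (a), commutativity is the key point. Given $\eta,\theta\in\cZ(\catstuff{C})=\setstuff{End}_{\catstuff{End}(\catstuff{C})}(\morstuff{Id}_{\catstuff{C}})$ and any object $X$ of $\catstuff{C}$, the component $\eta_{X}\colon X\to X$ is a morphism in $\catstuff{C}$. Applying naturality of $\theta$ to this morphism gives $\theta_{X}\circ\eta_{X}=\eta_{X}\circ\theta_{X}$, so the pointwise composition is commutative. The $\kk$-linear structure is inherited from the pointwise $\kk$-module structure on hom-sets, and $\kk$-bilinearity of composition in $\catstuff{C}$ makes $\cZ(\catstuff{C})$ a commutative $\kk$-algebra.

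For part (b), let $\functorstuff{F}\colon\catstuff{C}\to\catstuff{D}$ be a $\kk$-linear equivalence with quasi-inverse $\functorstuff{G}$, and fix a natural isomorphism $\alpha\colon\functorstuff{G}\functorstuff{F}\xrightarrow{\cong}\morstuff{Id}_{\catstuff{C}}$. I would define a map $\Phi\colon\cZ(\catstuff{D})\to\cZ(\catstuff{C})$ by sending $\eta\in\cZ(\catstuff{D})$ to the natural transformation whose component at $X\in\catstuff{C}$ is $\alpha_{X}\circ\functorstuff{G}(\eta_{\functorstuff{F}(X)})\circ\alpha_{X}^{-1}$. Naturality of this assignment in $X$ follows from naturality of $\eta$ (applied to $\functorstuff{F}(f)$ for a morphism $f$) and naturality of $\alpha$. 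The map $\Phi$ is $\kk$-linear since $\functorstuff{G}$ is, and it is multiplicative because composition in the endomorphism category of $\morstuff{Id}$ is computed pointwise and conjugation by $\alpha$ is an algebra homomorphism. The inverse is constructed symmetrically from a quasi-inverse of $\functorstuff{F}$ applied in the other direction; the main obstacle here is bookkeeping around the coherence isomorphism $\alpha$, but no new idea is needed.

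For part (c), the hypothesis that $\Hom_{\catstuff{C}}(\catstuff{C}_{i},\catstuff{C}_{j})=0$ for $i\neq j$ means that any endomorphism of an object of $\catstuff{C}_{i}$, viewed in $\catstuff{C}$, already lies in $\catstuff{C}_{i}$. Consequently, a natural transformation $\eta\colon\morstuff{Id}_{\catstuff{C}}\to\morstuff{Id}_{\catstuff{C}}$ restricts for each $i$ to a natural transformation $\eta^{(i)}\colon\morstuff{Id}_{\catstuff{C}_{i}}\to\morstuff{Id}_{\catstuff{C}_{i}}$, and conversely, any family $(\eta^{(i)})_{i\in I}$ assembles into a natural transformation of $\morstuff{Id}_{\catstuff{C}}$ using the trivial hom conditions to verify naturality across the product. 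The assignment $\eta\mapsto(\eta^{(i)})_{i}$ is a $\kk$-algebra isomorphism $\cZ(\catstuff{C})\xrightarrow{\cong}\prod_{i\in I}\cZ(\catstuff{C}_{i})$, completing the proof. The only mildly subtle point is that the product is taken in the category of $\kk$-algebras, so infinite families are allowed, which is exactly what will be needed in the subsequent application to $\tilt$ decomposed by blocks.
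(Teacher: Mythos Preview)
Your proof is correct and covers all three parts with appropriate detail. The paper's own proof is much terser: part (a) is declared ``evident'', and for (b) the paper takes a slightly different tack, observing that for any full additive $\kk$-linear subcategory $\catstuff{D}\subset\catstuff{C}$ restriction gives a $\kk$-algebra map $r\colon\cZ(\catstuff{C})\to\cZ(\catstuff{D})$, which is an isomorphism when the inclusion is an equivalence; part (c) is then dispatched \textsl{mutatis mutandis}. Your approach to (b) via transport along a chosen quasi-inverse and coherence isomorphism is the other standard route and is equally valid; the paper's restriction argument avoids the bookkeeping with $\alpha$ at the cost of implicitly replacing the equivalence by an essentially surjective full embedding. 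Neither approach has a real advantage here.
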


\begin{proof}
The first claim is evident. For the second claim we observe that, for any additive $\kk$-linear full subcategory 
$\catstuff{D}\subset\catstuff{C}$, restriction defines 
a $\kk$-algebra homomorphism $r\colon\cZ(\catstuff{C})\to\cZ(\catstuff{D})$. If $\catstuff{C}$ and $\catstuff{D}$ are equivalent then one can check that $r$ is an isomorphism. The final claim can be proven {\muta} as the second claim.
\end{proof}

Let us consider the example $\prmod{\setstuff{A}}$
(projective right modules of some $\kk$-algebra $\setstuff{A}$).

\begin{lemma}\label{lemma:cat-center2}
Let $\setstuff{A}$ be a $\kk$-algebra.
\begin{enumerate}[label=(\alph*)]

\setlength\itemsep{0.15cm}	

\item We have an inclusion
$\aZ(\setstuff{A})\hookrightarrow \cZ(\prmod{\setstuff{A}})$ of commutative $\kk$-algebras.

\item If $\setstuff{A}$ is unital, then we have an isomorphism
$\aZ(\setstuff{A})\xrightarrow{\cong}\cZ(\prmod{\setstuff{A}})$ of commutative $\kk$-algebras.

\end{enumerate}
\end{lemma}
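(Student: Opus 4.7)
The plan is to define the inclusion in (a) by sending a central element $z \in \aZ(\setstuff{A})$ to the natural transformation $\eta_z$ of $\morstuff{Id}_{\prmod{\setstuff{A}}}$ whose component $\eta_z(P) \colon P \to P$ on a projective right $\setstuff{A}$-module is the multiplication map $p \mapsto p \cdot z$. The centrality identity $az = za$ is exactly what makes each $\eta_z(P)$ right $\setstuff{A}$-linear, while naturality with respect to any right $\setstuff{A}$-linear $f \colon P \to Q$ is immediate from the $\setstuff{A}$-linearity of $f$. Since $\cZ(\prmod{\setstuff{A}})$ is commutative by \fullref{lemma:cat-center}.(a), the assignment $z \mapsto \eta_z$ is a $\kk$-algebra homomorphism. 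Injectivity is cleanest in the unital case by evaluating on $\setstuff{A}$ itself, giving $\eta_z(\setstuff{A})(1) = z$; in the locally unital case one tests on the projectives $e\setstuff{A}$ for local units $e$ and invokes $z = ez$ for an appropriate $e$.

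For the surjectivity needed in (b), given $\eta \in \cZ(\prmod{\setstuff{A}})$ I would first look at the component $\eta(\setstuff{A}) \colon \setstuff{A} \to \setstuff{A}$. Every right $\setstuff{A}$-linear endomorphism of $\setstuff{A}$ is left multiplication $\ell_u \colon a \mapsto u a$ by a unique element $u \in \setstuff{A}$, so setting $w := \eta(\setstuff{A})(1)$ yields $\eta(\setstuff{A}) = \ell_w$. Naturality applied to the family of endomorphisms $\ell_z$ of $\setstuff{A}$, for each $z \in \setstuff{A}$, then forces $\ell_{wz} = \ell_w \circ \ell_z = \ell_z \circ \ell_w = \ell_{zw}$, and hence $w \in \aZ(\setstuff{A})$. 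To identify $\eta$ with $\eta_w$ on every projective $P$, I would exploit naturality along the right $\setstuff{A}$-linear maps $f_p \colon \setstuff{A} \to P$, $a \mapsto p \cdot a$, one for each $p \in P$, to compute $\eta(P)(p) = f_p(\eta(\setstuff{A})(1)) = p \cdot w = \eta_w(P)(p)$.

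The proof is essentially formal and I do not anticipate a substantial obstacle; the main pitfall is bookkeeping, namely keeping straight the directions of composition and the distinction between left and right multiplication, and checking that the candidate preimage $w$ in (b) is actually central rather than merely an element of $\setstuff{A}$. The unital hypothesis in (b) enters only through the use of $1 \in \setstuff{A}$ both to extract $w$ from $\eta(\setstuff{A})$ and to factor an arbitrary $p \in P$ as $f_p(1)$.
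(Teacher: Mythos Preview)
Your proposal is correct and follows essentially the same route as the paper: both define the map $z\mapsto(\text{multiply by }z)$, and for surjectivity both evaluate an arbitrary $\eta$ on the regular module $\setstuff{A}$, set $w=\eta(\setstuff{A})(1)$, and use naturality along the maps $a\mapsto pa$ to recover $\eta$ on every projective. Your version is in fact slightly more complete, since you spell out why $w$ is central (via naturality against left multiplications), a point the paper simply asserts.
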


\begin{proof}
We claim that the map 
\begin{gather*}
\aZ(\setstuff{A})\ni c \mapsto
\chi_{c}:=
\big\{
\chi_{\obstuff{P}}\colon\obstuff{P}\to\obstuff{P},
m\mapsto mc
\mid
\obstuff{P}\in\prmod{\setstuff{A}}
\big\}
\end{gather*}
gives the required inclusion or isomorphism, respectively.

Indeed, it is easy to see that $\chi_{c}$ is actually in 
$\cZ(\prmod{\setstuff{A}})$ and that 
this map is a well-defined inclusion of $\kk$-algebras.

To prove surjectivity under the assumption of unitality of $\setstuff{A}$, 
given $\chi\in\cZ(\prmod{\setstuff{A}})$, 
its value on the projective right $\setstuff{A}$-module 
$\setstuff{A}$ is by definition a map of right $\setstuff{A}$-modules $\chi_{\setstuff{A}}\colon\setstuff{A}\to\setstuff{A}$. Let 
$c:=\chi_{\setstuff{A}}(1)\in\setstuff{A}$, which actually belongs to $\aZ(\setstuff{A})$. Next, for any 
$\obstuff{P}\in\prmod{\setstuff{A}}$ and any $m\in\obstuff{P}$ 
there is a unique $f\in\Hom_{\prmod{\setstuff{A}}}(\setstuff{A},\obstuff{P})$ with $f(1)=m$. Using this and naturality, we get
\begin{gather*}
\chi_{\obstuff{P}}(m)=
\chi_{\obstuff{P}}\big(f(1)\big)
=f\big(\chi_{\setstuff{A}}(1)\big)
=f(c)=mc,
\end{gather*}
which proves surjectivity.
\end{proof}

\subsection{The tilting center}\label{subsection:tilt-center}

Let us modify \fullref{lemma:cat-center2}, which we need to do since $\zigzag$ is non-unital:

\begin{lemma}\label{lemma:cat-center3}
We have an isomorphism of commutative $\K$-algebras given by
\begin{gather*}
\aZ(\zigzagc)\xrightarrow{\cong}\cZ(\prmod{\zigzag}),
\quad
\aZ(\zigzagc)\ni c\mapsto
\chi_{c}:=
\big\{
\chi_{\obstuff{P}}\colon\obstuff{P}\to\obstuff{P},
m\mapsto mc
\mid
\obstuff{P}\in\prmod{\zigzag}
\big\}
.
\end{gather*}
\end{lemma}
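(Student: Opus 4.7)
The plan is to adapt the proof of \fullref{lemma:cat-center2} to the locally unital setting, keeping in mind that $\zigzag$ itself has no global unit but $\zigzagc$ does. The candidate map $c\mapsto\chi_{c}$ should still make sense because any central $c\in\aZ(\zigzagc)$ commutes with every local idempotent $\pjw[v{-}1]$, hence is block-diagonal in the sense that $c=\sum_{v\in\N}\pjw[v{-}1]c\pjw[v{-}1]$ via \eqref{eq:unit}. The chief bookkeeping task will be to make sure that right multiplication by such an infinite sum defines a morphism in $\prmod{\zigzag}$, and to recover a central $c$ from a natural transformation without running into convergence issues.

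First I would verify well-definedness: for $m=\pjw[v{-}1]a\in\pjw[v{-}1]\zigzag$ with $a\in\zigzag$ finite, centrality lets me compute $mc=\pjw[v{-}1]a\bigl(\sum_{w}\pjw[w{-}1]c\pjw[w{-}1]\bigr)=\sum_{w}\pjw[v{-}1]a\pjw[w{-}1]c\pjw[w{-}1]$, which is a finite sum because only finitely many $\pjw[w{-}1]$ appear in the expansion of the finite element $a$. Thus $mc\in\pjw[v{-}1]\zigzag$. For an arbitrary projective $\obstuff{P}$ and $m\in\obstuff{P}$, the same argument works after observing that $m$ is supported on finitely many local idempotents. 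Naturality of $\chi_{c}$ for any $\zigzag$-linear $f\colon\obstuff{P}\to\obstuff{Q}$ then reduces to associativity: $f(mc)=f(m)c$ holds because after localizing at a finite set of idempotents the computation takes place entirely inside $\zigzag$. That the assignment $c\mapsto\chi_{c}$ is a $\K$-algebra homomorphism is immediate.

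For injectivity, if $\chi_{c}=0$, then in particular $\pjw[v{-}1]c=\chi_{c,\pjw[v{-}1]\zigzag}(\pjw[v{-}1])=0$ for every $v$, and centrality gives $\pjw[v{-}1]c\pjw[v{-}1]=\pjw[v{-}1]c=0$, whence $c=\sum_{v}\pjw[v{-}1]c\pjw[v{-}1]=0$. For surjectivity, given $\chi\in\cZ(\prmod{\zigzag})$, I set $x_{v}:=\chi_{\pjw[v{-}1]\zigzag}(\pjw[v{-}1])\in\pjw[v{-}1]\zigzag\pjw[v{-}1]$ (using that $\chi_{\pjw[v{-}1]\zigzag}(\pjw[v{-}1])=\chi_{\pjw[v{-}1]\zigzag}(\pjw[v{-}1])\pjw[v{-}1]$), and define $c:=\sum_{v\in\N}x_{v}\in\zigzagc$. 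Applying naturality to the morphism $\pjw[v{-}1]\zigzag\to\pjw[w{-}1]\zigzag$ determined by left multiplication with an arbitrary $y\in\pjw[w{-}1]\zigzag\pjw[v{-}1]$ yields the identity $x_{w}y=yx_{v}$, and summing over $v,w$ shows that $c$ commutes with every element of $\zigzag$ in $\zigzagc$, which by the unit equation \eqref{eq:unit} implies $c\in\aZ(\zigzagc)$.

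It remains to check $\chi=\chi_{c}$. On the generators $\pjw[v{-}1]\zigzag$ this is the tautology $\chi_{\pjw[v{-}1]\zigzag}(\pjw[v{-}1])=x_{v}=\pjw[v{-}1]c=\pjw[v{-}1]\cdot c$, and for an arbitrary $\obstuff{P}\in\prmod{\zigzag}$ with $m\in\obstuff{P}$ decomposed as $m=\sum_{w}m\pjw[w{-}1]$ (a finite sum), naturality with respect to the morphisms $\pjw[w{-}1]\zigzag\to\obstuff{P}$, $\pjw[w{-}1]\mapsto m\pjw[w{-}1]$ gives $\chi_{\obstuff{P}}(m)=\sum_{w}m\pjw[w{-}1]x_{w}=mc$. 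The only real obstacle is the careful handling of the infinite sum defining $c$: one must verify both that centrality of $c$ in $\zigzagc$ is equivalent to the collection of naturality relations $x_{w}y=yx_{v}$, and that each operation performed with $c$ on a module element only involves finitely many nonzero terms, so that no actual limiting procedure is needed beyond the standing convention on $\zigzagc$.
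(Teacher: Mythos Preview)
Your proof is correct and follows the same strategy as the paper's, namely adapting the argument of \fullref{lemma:cat-center2} to the locally unital setting by working idempotent-by-idempotent. The paper's own proof is a one-liner that packages your construction as ``$c:=\chi_{\zigzagc}(\sum_{v\in\N}\pjw[v{-}1])$''; your version unfolds this into the components $x_{v}=\chi_{\pjw[v{-}1]\zigzag}(\pjw[v{-}1])$ and carefully checks the finiteness and naturality conditions, which is exactly what that shorthand is meant to encode.
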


\begin{proof}
Defining $c:=\chi_{\zigzagc}(\sum_{v\in\N}\pjw[v{-}1])$, 
the argument is the same as in \fullref{lemma:cat-center2}.
\end{proof}

By the well-understood block decomposition
\begin{gather*}
\tilt
=
{\textstyle\bigoplus_{e\in\eve}}\,
\tilt_{e{-}1},
\quad
\tilt_{e{-}1}=
\big\{
\obstuff{T}(v-1)\mid
v\in\block{e}{\ppar}
\big\},
\end{gather*}
where hom spaces between $\tilt_{e{-}1}$ and 
$\tilt_{e^{\prime}{-}1}$ are trivial unless $e=e^{\prime}$.

\begin{definition} 
For $v\in\block{e}{\ppar}\setminus\{e\}$ we define the natural transformation
\begin{gather}\label{eq:tiltnattrafo}
\chi_{v}:=
\big\{
\chi_{\obstuff{T}}\colon\obstuff{T}\to\obstuff{T},
\chi_{\obstuff{T}}=\loopdown{}{v}
\mid
\obstuff{T}\in\tilt_{e{-}1}
\big\},
\end{gather}
where $\loopdown{}{v}$ was defined in \fullref{definition:Lv}.
\end{definition}

Thus, we can finally prove \fullref{theorem:main}, which is 
a consequence of:

\begin{theorem}\label{theorem:center-category}
For $e\in\eve^{<\ppar}$ we have an $\K$-algebra isomorphism
\begin{gather*}
\cZ(\tilt_{e{-}1})\xrightarrow{\cong}
{\K}\big[X_{v}\mid v\in\block{e}{\ppar}\setminus\{e\}\big]
\Big/
\big\langle X_{v}X_{w}\mid v,w\in\block{e}{\ppar}\setminus\{e\}\big\rangle,
\quad
\chi_{v}\mapsto X_{v}.
\end{gather*}
\end{theorem}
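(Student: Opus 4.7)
The plan is to chain together three isomorphisms that reduce the computation to \fullref{theorem:center-algebra}. First, I will exhibit an equivalence of additive $\K$-linear categories $\tilt_{e{-}1}\simeq\prmod{\zigzag_{e{-}1}}$. This is essentially Yoneda-style: by the very definition of $\zigzag_{e{-}1}$ as the hom algebra of the full subcategory of $\tilt_{e{-}1}$ on the indecomposables $\obstuff{T}(v-1)$ for $v\in\block{e}{\ppar}$, the assignment $\obstuff{T}(v-1)\mapsto\pjw[v{-}1]\zigzag_{e{-}1}$ extends, using additivity and Krull--Schmidt, to a fully faithful and essentially surjective $\K$-linear functor. \fullref{lemma:cat-center}.(b) then yields $\cZ(\tilt_{e{-}1})\cong\cZ(\prmod{\zigzag_{e{-}1}})$.

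Second, I apply \fullref{lemma:cat-center3} verbatim to the block algebra $\zigzag_{e{-}1}$ in place of $\zigzag$, which gives an isomorphism $\aZ(\zigzagc_{e{-}1})\xrightarrow{\cong}\cZ(\prmod{\zigzag_{e{-}1}})$ sending a central element $c$ to the natural transformation of right multiplication by $c$. Third, \fullref{theorem:center-algebra} identifies the left-hand side with $\K[X_v\mid v\in\block{e}{\ppar}\setminus\{e\}]/\langle X_vX_w\rangle$ via $\loopdown{}{v}\mapsto X_v$. Composing the three isomorphisms produces the claimed map.

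To verify that it sends $\chi_v$ to $X_v$, observe that $\chi_v$ is defined in \eqref{eq:tiltnattrafo} as the natural transformation acting on each $\obstuff{T}\in\tilt_{e{-}1}$ by right multiplication with $\loopdown{}{v}$; naturality is exactly the centrality statement of \fullref{lemma:loopscentral}.(b). Tracing through the three identifications, $\chi_v$ becomes successively the right-multiplication natural transformation in $\prmod{\zigzag_{e{-}1}}$, the central element $\loopdown{}{v}\in\aZ(\zigzagc_{e{-}1})$, and finally $X_v$.

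The main point requiring care is the first step, where one must verify that the target of the equivalence is genuinely $\prmod{\zigzag_{e{-}1}}$ and understand why this forces the appearance of the completion $\zigzagc_{e{-}1}$, not $\zigzag_{e{-}1}$, on the algebra side: natural transformations of the identity on $\tilt_{e{-}1}$ can act non-trivially on infinitely many indecomposables at once, and these are precisely the infinite $\K$-linear combinations of the $\loopdown{}{v}$ that the completion in \fullref{subsection:closure} is designed to accommodate.
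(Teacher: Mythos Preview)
Your proof is correct and follows essentially the same route as the paper's: reduce $\cZ(\tilt_{e{-}1})$ to $\cZ(\prmod{\zigzag_{e{-}1}})$ via an equivalence of categories and \fullref{lemma:cat-center}.(b), then to $\aZ(\zigzagc_{e{-}1})$ via \fullref{lemma:cat-center3}, and finally invoke \fullref{theorem:center-algebra}. Your write-up simply spells out more of the details (the Yoneda-style equivalence, the tracking of $\chi_v\mapsto X_v$, and the role of the completion) than the paper's terse version.
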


\begin{proof}
By \fullref{theorem:main-tl-section} and \fullref{lemma:cat-center}.(b)
we need to compute $\cZ(\prmod{\zigzag_{e-1}})$.
This in turn, by \fullref{lemma:cat-center3}, reduces 
to compute the algebra 
center of $\zigzagc_{e-1}$, which is 
\fullref{theorem:center-algebra}.
\end{proof}

\subsection{The center and Donkin's tensor product theorem}\label{subsection:tensor}
We expect that the central elements $\loopdown{}{v}$ admit an interpretation via Donkin’s tensor product 
theorem, which, in our notation, takes the following form.

\begin{propositionqed}\label{proposition:donkin}(See \cite[Proposition 2.1]{Do-tilting-alg-groups}.)
For $v=\pbase{a_{j},\dots,a_{0}}{\ppar}$
we have an isomorphism of (left) $\SLtwo$-modules
\begin{gather*}
\obstuff{T}(v-1)
\cong
\obstuff{T}(a_{j}-1)^{j}
\otimes
\big({\textstyle\bigotimes_{i=0}^{j-1}}\,
\obstuff{T}(a_{i}+\ppar-1)^{i}\big)
,
\end{gather*}
where the superscripts indicate Frobenius twist.
\end{propositionqed}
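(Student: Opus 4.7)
The plan is to show that the right-hand side $M$ is an indecomposable tilting $\SLtwo$-module of highest weight $v-1$; by the classification of indecomposable tilting modules up to isomorphism by highest weight, this suffices.

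The highest weight computation is routine: using that Frobenius twist multiplies weights by $\ppar$, the highest weight of $M$ equals
\begin{equation*}
(a_{j}-1)\ppar^{j} + {\textstyle\sum_{i=0}^{j-1}}(a_{i}+\ppar-1)\ppar^{i} = v - \ppar^{j} + (\ppar^{j}-1) = v-1,
\end{equation*}
occurring with multiplicity one as a tensor product of the one-dimensional top weight spaces of the factors. To verify that $M$ is tilting, one cannot merely invoke preservation of the tilting property under Frobenius twist (this fails for $\SLtwo$ in general: {\eg} $L(1)^{(1)}$ is not tilting). Instead, I would proceed by induction on $j+1$, the number of $\ppar$-adic digits, based on the key lemma that, for $0\leq r\leq\ppar-1$ and $n\geq 0$,
\begin{equation*}
\obstuff{T}(r+\ppar-1) \otimes \obstuff{T}(n)^{(1)} \cong \obstuff{T}(r+\ppar n+\ppar-1).
\end{equation*}
Iterating this lemma across successive Frobenius levels, while peeling off one $\ppar$-adic digit at a time, produces the general decomposition stated in the proposition; the combinatorial bookkeeping matches because $a_{0}+\ppar-1 = r+\ppar-1$ for $r=a_{0}$, and what remains factors as a Frobenius twist of the Donkin decomposition for $n$, where $v = a_{0}+\ppar(n+1)$.

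The main obstacle is the lemma above. My approach: verify that $\obstuff{T}(r+\ppar-1)\otimes\obstuff{T}(n)^{(1)}$ has a $\Delta$-filtration using the explicit $\Delta$-sections of $\obstuff{T}(r+\ppar-1)$ (there are at most two, by linkage for $\SLtwo$) together with the observation that $\Delta(\mu)\otimes\obstuff{T}(n)^{(1)}$ has a $\Delta$-filtration, which reduces to a character computation via the rank-one Weyl character formula expressing $[\mu]\cdot\mathrm{ch}\,\obstuff{T}(n)^{(1)}$ as a non-negative integer combination of Weyl characters. A matching $\nabla$-filtration follows from self-duality under the contravariant duality on $\SLtwo$-modules. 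Hence the tensor product is tilting with one-dimensional top weight space $r+\ppar n+\ppar -1$, so it contains $\obstuff{T}(r+\ppar n+\ppar-1)$ as a summand; equality is then forced by a character count matching the Donkin character formula for indecomposable tilting modules of $\SLtwo$. The combinatorial $\ppar$-adic identity underlying that character count is the computational heart of the argument, and is the step that requires the most care.
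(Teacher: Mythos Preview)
The paper does not prove this proposition; it simply cites Donkin's original article and moves on. So there is no in-paper argument to compare against, only Donkin's. Your overall strategy---show the right-hand side is an indecomposable tilting module with the correct highest weight, and reduce to the one-step lemma $\obstuff{T}(r+\ppar-1)\otimes\obstuff{T}(n)^{(1)}\cong\obstuff{T}(r+\ppar n+\ppar-1)$---is exactly Donkin's. The highest-weight computation and the inductive bookkeeping are fine.

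However, your proposed proof of the key lemma has a genuine gap. You reduce the $\Delta$-filtration of $\obstuff{T}(r+\ppar-1)\otimes\obstuff{T}(n)^{(1)}$ to the claim that each $\Delta(\mu)\otimes\obstuff{T}(n)^{(1)}$ is $\Delta$-filtered, and you say this ``reduces to a character computation''. Neither step holds. First, a character being a non-negative sum of Weyl characters does not imply the existence of a $\Delta$-filtration; that needs a cohomological argument. Second, and more seriously, the intermediate modules $\Delta(\mu)\otimes\obstuff{T}(n)^{(1)}$ are \emph{not} $\Delta$-filtered in general. For $\ppar=2$, take $r=1$ so that $\obstuff{T}(2)$ has $\Delta$-sections $\Delta(2)$ and $\Delta(0)$; then $\Delta(0)\otimes\obstuff{T}(2)^{(1)}=\obstuff{T}(2)^{(1)}$ has character $q^{4}+2+q^{-4}$, which is not a non-negative combination of Weyl characters, so it admits no $\Delta$-filtration. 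The full tensor product $\obstuff{T}(2)\otimes\obstuff{T}(2)^{(1)}$ \emph{is} tilting, but you cannot see this by filtering the first factor. Your indecomposability step is also problematic: invoking ``the Donkin character formula for indecomposable tilting modules'' to finish is circular, since that formula is precisely what the proposition computes.

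Donkin's actual argument bypasses both issues by working with $G_{1}T$-modules. For $0\le r\le\ppar-1$ the module $\obstuff{T}(r+\ppar-1)$ is, upon restriction, the indecomposable injective (equivalently projective) $G_{1}T$-module $\widehat{Q}_{1}(\ppar{-}1{-}r)$. Tensoring with any Frobenius twist preserves $G_{1}$-injectivity and, because twists are trivial on $G_{1}$, sends indecomposable $G_{1}T$-injectives to indecomposable $G_{1}T$-injectives (the effect on the labelling is a shift by $\ppar\cdot(\text{weight})$). Donkin then uses that a $G$-module which is $G_{1}$-injective and has suitable highest-weight behaviour is tilting; indecomposability is inherited from the $G_{1}T$-level, with no character comparison needed. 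If you want to repair your approach, this structural input is the missing ingredient.
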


\begin{lemma}\label{lemma:donkin}
Let $v=\pbase{a_{j},\dots,a_{0}}{\ppar}$ and $v^{\prime}=\pbase{1,
a_{j-1},\dots,a_{0}}{\ppar}$. For all
$w=\pbase{b_{k},\dots,b_{j},b_{j-1},\dots,b_{0}}{\ppar}\in\Cset$, define
$w^{\prime}=\pbase{b_{k},\dots,b_{j}}{\ppar}$. We have an isomorphism of (left)
$\SLtwo$-modules
\begin{gather*}
\obstuff{T}(w-1)
\cong
\obstuff{T}(w^{\prime}-1)^j
\otimes
\obstuff{T}(v^{\prime}-1)
.
\end{gather*}
\end{lemma}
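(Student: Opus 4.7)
The plan is to apply Donkin's tensor product theorem (\fullref{proposition:donkin}) to each of the three tilting modules appearing in the statement and then reconcile the resulting Frobenius-twisted tensor factors. Since Frobenius twist is compatible with tensor products and composes additively in its exponent, the isomorphism should reduce to a combinatorial check on $\ppar$-adic digits, controlled by \fullref{lemma:charCv}.

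Concretely, I would first expand $\obstuff{T}(w-1)$ via Donkin's theorem as
\begin{gather*}
\obstuff{T}(w-1)\cong\obstuff{T}(b_{k}-1)^{k}\otimes\big({\textstyle\bigotimes_{i=0}^{k-1}}\obstuff{T}(b_{i}+\ppar-1)^{i}\big).
\end{gather*}
For $w^{\prime}=\pbase{b_{k},\dots,b_{j}}{\ppar}$, whose leading digit sits at position $k-j$, Donkin's theorem followed by a $j$-fold Frobenius twist, which shifts every exponent up by $j$, yields
\begin{gather*}
\obstuff{T}(w^{\prime}-1)^{j}\cong\obstuff{T}(b_{k}-1)^{k}\otimes\big({\textstyle\bigotimes_{i=j}^{k-1}}\obstuff{T}(b_{i}+\ppar-1)^{i}\big).
\end{gather*}
For $v^{\prime}=\pbase{1,a_{j-1},\dots,a_{0}}{\ppar}$, Donkin's theorem produces a leading factor $\obstuff{T}(1-1)^{j}=\obstuff{T}(0)^{j}$, which is the trivial module (any Frobenius twist of the trivial module is trivial), so
\begin{gather*}
\obstuff{T}(v^{\prime}-1)\cong{\textstyle\bigotimes_{i=0}^{j-1}}\obstuff{T}(a_{i}+\ppar-1)^{i}.
\end{gather*}

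Multiplying the last two displays produces an expression matching the Donkin expansion of $\obstuff{T}(w-1)$ as soon as $b_{i}=a_{i}$ for all $0\leq i\leq j-1$. The range $1\leq i<j$ is precisely \fullref{lemma:charCv}, and the remaining case $i=0$ follows from \fullref{lemma:zerodigit}: it shows $0\in\Dset$ whenever $v\neq e$, so no arrow $\Up{k}$ or $\Down{k}$ with $k\notin\Dset$ in a defining $\Dset$-path for $\Cset$ can modify the zeroth digit, forcing $b_{0}=a_{0}$ as well. The main obstacle, if any, is purely notational: carefully tracking the Frobenius twist exponents under the reindexing that identifies the positions $j,\dots,k$ in $w$ with the positions $0,\dots,k-j$ in $w^{\prime}$. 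Once this bookkeeping is in place, Donkin's theorem does all the substantive work.
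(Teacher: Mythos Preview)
Your proof is correct and follows essentially the same approach as the paper, which simply cites \fullref{lemma:charCv} to conclude $w=\pbase{b_{k},\dots,b_{j},a_{j-1},\dots,a_{0}}{\ppar}$ and then invokes \fullref{proposition:donkin}. Your version is more explicit and in fact more careful: the \emph{statement} of \fullref{lemma:charCv} only asserts $b_i=a_i$ for $1\leq i<j$, and you correctly supply the missing $i=0$ case via \fullref{lemma:zerodigit}, whereas the paper's terse argument tacitly relies on the \emph{proof} of \fullref{lemma:charCv} (which does show that all first $j$ digits agree).
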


\begin{proof}
First, note that \fullref{lemma:charCv}
gives $w=\pbase{b_{k},\dots,b_{j+1},b_{j},
a_{j-1},\dots,a_{0}}{\ppar}$.
Thus, applying \fullref{proposition:donkin} proves the claim.
\end{proof}

\begin{example}\label{example:donkin}
As before, let $\ppar=7$, $v=\pbase{3,1,6,5,0,5,6}{7}$ with
$v^{\prime}=\pbase{1,1,6,5,0,5,6}{7}$. Then
$w=\pbase{1,4,1,6,5,0,5,6}{7}\in\Cset$ and $w^{\prime}=\pbase{1,4}{7}$.
\fullref{proposition:donkin} and \fullref{lemma:donkin} give
\begin{gather*}
\obstuff{T}(v^{\prime}-1)
\cong
\obstuff{T}(7)^{5}
\otimes
\obstuff{T}(12)^{4}
\otimes
\obstuff{T}(11)^{3}
\otimes
\obstuff{T}(6)^{2}
\otimes
\obstuff{T}(11)^{1}
\otimes
\obstuff{T}(12)^{0}
,
\\
\obstuff{T}(v-1)
\cong
\obstuff{T}(2)^{6}
\otimes
\obstuff{T}(v^{\prime}-1)
,
\\
\obstuff{T}(w-1)
\cong
\obstuff{T}(w^{\prime}-1)^{6}
\otimes
\obstuff{T}(v^{\prime}-1)
\cong
\obstuff{T}(0)^{7}
\otimes
\obstuff{T}(10)^{6}
\otimes
\obstuff{T}(v^{\prime}-1)
.
\end{gather*}
\end{example}

In the situation of \fullref{lemma:donkin} where
\begin{gather*}
\obstuff{T}(w-1)
\cong
\obstuff{T}(w^{\prime}-1)^{j}
\otimes
\obstuff{T}(v^{\prime}-1)
\end{gather*}
it seems plausible that $\loopdown{}{v}\pjw[w{-}1] =
c\cdot\morstuff{id}_{\obstuff{T}(w^{\prime}-1)^j}\otimes
\loopdown{}{v^{\prime}}$ for some scalar $c\in\K$. One approach to proving this
statement involves giving a diagrammatic interpretation of Frobenius twists in
the Temperley--Lieb category. Alternatively, it would also be interesting to
have a proof that such elements are indeed central and span the center, which
does not use the explicit presentation of $\tilt$, {\eg} using the abstract basis
given in \cite{AnStTu-cellular-tilting}.

\subsection{Blocks via the center}\label{subsection:blocks}

Let us discuss a classical application of the center of a category, mimicking the well-understood case of category $\mathcal{O}$. Note that $\cZ(\catstuff{C})$ acts naturally on all
objects $\obstuff{X}$ of $\catstuff{C}$ via
\begin{gather*}
\chi\acts\morstuff{f}:=\morstuff{f}\circ\chi_{\obstuff{X}}
=\chi_{\obstuff{X}}\circ\morstuff{f},
\quad\chi\in\cZ(\catstuff{C}),\morstuff{f}\in\End_{\catstuff{C}}(\obstuff{X}).
\end{gather*}
In particular, for any character of the center
$\chi_{s}\colon\cZ(\catstuff{C})\to\K$,
we can fiber the category $\catstuff{C}$ 
by defining full subcategories
$\catstuff{C}_{s}\subset\catstuff{C}$ consisting of all objects 
where $\cZ(\catstuff{C})$ acts with character $\chi_{s}$.

By \fullref{theorem:main}, the $\catstuff{C}_{s}$ for $\catstuff{C}=\catstuff{Tilt}$ are exactly the blocks $\tilt_{e{-}1}$, which follows because 
each $\K$-algebra $\cZ(\tilt_{e{-}1})$, which has only one non-nilpotent basis element, has exactly one simple.

\subsection{The Casimir element}\label{subsection:casimir}

Let $C\colon A\to\K$ denote the Casimir element of $\SLtwo$, which is an element
in its distribution algebra, {\ie} a map from the coordinate ring $A$ of
$\SLtwo$ to $\K$. Recall that $C$ acts on every $\SLtwo$ module $\obstuff{M}$
using the coaction $\Delta_{\obstuff{M}}$ of $A$ on $\obstuff{M}$:
\begin{gather*}
C
\colon
\obstuff{M}\xrightarrow{\Delta_{\obstuff{M}}}\obstuff{M}\otimes_{\K}A
\xrightarrow{1\otimes C}\obstuff{M}\otimes_{\K}\K
\xrightarrow{\cong}\obstuff{M}.
\end{gather*}
Let $\obstuff{L}(v-1)$ denote the simple $\SLtwo$ module of highest weight
$v-1$. We choose $C = (h+1)^{2} + 4fe$ and see that $C$ acts on
$\obstuff{L}(v-1)$ as $v^2\morstuff{id}$. The proof of \fullref{lemma:zerodigit}
implies that $C$ acts on $\obstuff{L}(v-1)$ as multiplication by
$v^{2}=a_{0}^{2}= (\ppar-a_{0})^2=e^{2}\in\F$ for every $v\in\block{e}{\ppar}$. Since
all $L(v-1)$ that appear in composition series of objects of $\tilt_{e-1}$ have
$v\in\block{e}{\ppar}$, we see that $C$ acts by this scaling on all objects in
$\tilt_{e-1}$. Hence, in the blocks $\aZ(\zigzagc_{e{-}1})$ of the Ringel dual we have
$C=e^{2}\cdot 1$.

\section{Two other cases}\label{section:othercases}

\begin{remark}
The two theorems below can be proven, {\muta}, 
as for $\catstuff{Tilt}$, but the computations 
are much simpler. Thus, we decided to keep the proofs short.
\end{remark}

\subsection{A variation: the quantum case (generation \texorpdfstring{$1$}{1})}\label{subsection:qgroup}

In the quantum group case the same calculations, using the Ringel dual computed
in \cite{AnTu-tilting}, work, but are much simpler, and so is the result. Let
$\tilt^{q}$ denote the quantum analog of $\tilt$ with quantum parameter $q$, where we use Lusztig's divided power quantum group.
Throughout this part, \cite{AnTu-tilting} serves as our reference for statements
about $\tilt^{q}$. However, we use the notation from this paper. In particular,
we write $\pbase{a_{j},\dots,a_{0}}{k}:=\sum_{i=0}^{j}a_{i} k^{i}=v$ for the
$k$-adic expansion with digits $0\leq a_i<k$.

We will work over a field $\C(q)$ and distinguish the following cases:
\begin{enumerate}[label=$\bullet$]

\setlength\itemsep{0.15cm}

\item The quantum parameter $q$ is a formal parameter, or $q\in \C$ is $q=\pm 1$ or not a root of unity. Here we set $k=\infty$.

\item The quantum parameter $q\in\C$ is a root of unity $q\neq \pm 1$. Here $k$ denotes the order of $q^{2}$.

\end{enumerate}

We also let
\begin{gather}\label{eq:nattrafos}
\chi_{v}:=
\big\{
\chi_{\obstuff{T}}\colon\obstuff{T}\to\obstuff{T},
\chi_{\obstuff{T}}=\loopdown{\{0\}}{v}
\mid
\obstuff{T}\in\tilt
\big\},
\quad
\delta_{v}:=
\big\{
\delta_{\obstuff{T}}\colon\obstuff{T}\to\obstuff{T},
\delta_{\obstuff{T}}=\delta_{v}
\mid
\obstuff{T}\in\tilt
\big\},
\end{gather}
where $\loopdown{\{0\}}{v}$ and $\delta_{v}$ are 
the maps that act 
as non-zero only on summands 
$\obstuff{T}(v-1)$, on which they are a
head-to-socle map and the identity, respectively.

\begin{theorem}\label{theorem:qgroup}	
We have the following.
\begin{enumerate}[label=(\alph*)]

\setlength\itemsep{0.15cm}

\item For $k=\infty$ there is an 
isomorphism of $\C(q)$-algebras
\begin{gather*}
\cZ(\tilt^{q})\xrightarrow{\cong}
{\textstyle\prod_{\N}}\,\C(q),
\quad
\delta_{v}\mapsto 1_{v}.
\end{gather*}

\item For the root of unity case there is an 
isomorphism of $\C$-algebras
\begin{gather}\label{eq:qtiltcenter}
\begin{gathered}
\cZ(\tilt^{q})
\xrightarrow{\cong}
{\textstyle\bigoplus_{i=-1}^{k-2}}\,
\obstuff{X}(i),
\quad
\obstuff{X}(i)
=
\begin{cases}
{\textstyle\prod_{\N}}\,\C 
&\text{if }i=-1,
\\
\C[X_{v}\mid v\in\N]\Big/\langle X_{v}X_{w}\mid v,w\in\N\rangle 
& \text{otherwise},
\end{cases}
\end{gathered}
\end{gather}
where $\chi_{w}\mapsto X_{v}$, 
and $\delta_{x}\mapsto 1_{v}$ for $w,x\in\N$ as 
explained in the proof.
\end{enumerate}
\end{theorem}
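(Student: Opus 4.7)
The plan is to mirror the proof of \fullref{theorem:center-category} while exploiting the fact that, as the preceding remark notes, the combinatorics collapses drastically in the quantum setting: every non-simple tilting module in $\tilt^{q}$ has generation exactly $1$, so the analog of $\Dset$ is either empty or the singleton $\{0\}$. Consequently, the only loops that can contribute central elements are of the form $\loopdown{\{0\}}{v-1}$, each of which is nilpotent of order $2$ by the quantum analog of \fullref{lemma:dualnumbers}.

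For part (a), $\tilt^{q}$ is semisimple with pairwise orthogonal simple objects $\obstuff{T}(v-1)$ indexed by $v\in\N$, so \fullref{lemma:cat-center}.(c) directly yields $\cZ(\tilt^{q})\cong\prod_{v\in\N}\End_{\tilt^{q}}(\obstuff{T}(v-1))\cong\prod_{\N}\C(q)$, with $\delta_{v}$ mapping to the idempotent $1_{v}$. For part (b), I would begin from the linkage block decomposition $\tilt^{q}=\bigoplus_{i=-1}^{k-2}\tilt^{q}_{i}$ provided by \cite{AnTu-tilting}, where $\tilt^{q}_{-1}$ is the semisimple Steinberg-type block consisting of the simple tilting modules at weights $mk-1$ for $m\in\N$, while each regular block $\tilt^{q}_{i}$ for $0\leq i\leq k-2$ corresponds to a single connected component of the Ringel dual quiver. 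By \fullref{lemma:cat-center}.(c) it then suffices to compute $\cZ(\tilt^{q}_{i})$ separately: the singular case is handled as in part (a) and produces $\obstuff{X}(-1)\cong\prod_{\N}\C$, whereas each regular block is treated by the quantum analogs of \fullref{lemma:cat-center3} and \fullref{theorem:center-algebra}. The corresponding Ringel dual block algebra has vertices joined by a single up/down arrow pair between adjacent vertices, with at most one loop $\loopdown{\{0\}}{v-1}$ per non-eve vertex, so the analogs of \fullref{lemma:loopscentral} and \fullref{lemma:loopscentral-op} trivialize: the ``transport'' sum of \eqref{eq:maxloop} has exactly one summand per equivalence-class vertex, and no higher-digit loops compete. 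This yields the basis $\{1\}\cup\{\chi_{v}\}$ of $\cZ(\tilt^{q}_{i})$, while the relations $\chi_{v}\chi_{w}=0$ follow from $\loopdown{\{0\}}{v-1}\cdot\loopdown{\{0\}}{v-1}=0$ exactly as in \fullref{lemma:prodzero}.

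The only non-routine bookkeeping is the index matching between $i\in\{-1,0,\ldots,k-2\}$ and the $k$ linkage classes at a $2k$-th root of unity, needed to ensure that \eqref{eq:qtiltcenter} is well-defined on each summand and that the assignment $\chi_{w}\mapsto X_{v}$, $\delta_{x}\mapsto 1_{v}$ is consistent. I do not expect any deeper obstacle here: once the block labelling of \cite{AnTu-tilting} is fixed, all remaining work is the obvious generation-$1$ specialization of the arguments in \fullref{section:center-alg} and \fullref{section:center-tilt}, which is why the authors rightly choose to keep the proof short.
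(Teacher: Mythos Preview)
Your proposal is correct and follows essentially the same route as the paper: block decomposition into the semisimple Steinberg block plus $k-1$ isomorphic regular blocks, identification of each regular block with projectives over a zigzag algebra on $\N$, and then the center computation. The only cosmetic difference is that the paper argues directly that each single loop $\loopdown{\{0\}}{v}=\Down{\{0\}}\Up{\{0\}}\pjw[v{-}1]=\Up{\{0\}}\Down{\{0\}}\pjw[v{-}1]$ is already central in the zigzag algebra because it is annihilated by every generator except its own idempotent, whereas you phrase this as the degenerate case of the transport machinery where $\Dset=\{0\}$ forces $\Cset=\{v\}$ and the sum \eqref{eq:maxloop} collapses to one term; both descriptions amount to the same observation.
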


\begin{proof}
\noindent(a). This claim is clear as 
$\tilt^{q}$ is semisimple and its 
simples are indexed by $\N$. 

\noindent(b). For the root of unity case 
we recall that we have equivalences of additive $\K$-linear categories
\begin{gather*}
\tilt^{q}\cong
\tilt^{q}_{St}
\oplus
\tilt^{q}_{0}
\oplus
\dots
\oplus
\tilt^{q}_{k{-}2},
\\
\tilt^{q}_{0}
\cong
\dots
\cong
\tilt^{q}_{k{-}2}
\cong
\prmod{\zigzag^{q}},
\end{gather*}
where $\tilt^{q}_{St}$ is semisimple with 
simple objects being the indecomposable quantum tilting modules whose highest weight $v$ satisfies 
$a_{0}=0$, which is the 
first case in \eqref{eq:qtiltcenter}.

Moreover, the Ringel dual $\zigzag^{q}$ 
of $\tilt^{q}_{0}$ is a zigzag algebra with a boundary condition on the vertex set $\N$, {\ie}
\begin{gather*}
\begin{tikzcd}[ampersand replacement=\&,column sep=3em]  
(v_{0} -1)
\ar[r,"\Up{\{0\}}",black,yshift=0.1cm]
\&
(v_{1} -1)
\ar[r,"\Up{\{0\}}",black,yshift=0.1cm]
\ar[l,"\Down{\{0\}}",orchid,yshift=-0.1cm]
\&
(v_{2} -1)
\ar[r,"\Up{\{0\}}",black,yshift=0.1cm]
\ar[l,"\Down{\{0\}}",orchid,yshift=-0.1cm]
\&
(v_{3} -1)
\ar[r,"\Up{\{0\}}",black,yshift=0.1cm]
\ar[l,"\Down{\{0\}}",orchid,yshift=-0.1cm]
\&
\dots
\ar[l,"\Down{\{0\}}",orchid,yshift=-0.1cm]
\end{tikzcd}
,
\end{gather*}
where $v_0=1$ and $v_{i+1}=v_{i}(0)$ for $i\geq 0$, 
subject to the relations
\begin{gather*}
\Down{\{0\}}\Down{\{0\}}\pjw[v{-}1]=0,
\Up{\{0\}}\Up{\{0\}}\pjw[v{-}1]=0,
\;
\Down{\{0\}}\Up{\{0\}}\pjw[v{-}1]
=\Up{\{0\}}\Down{\{0\}}\pjw[v{-}1]
\text{ for }v\neq 1,
\;
\Down{\{0\}}\Up{\{0\}}\pjw[0]=0.
\end{gather*}
(Note the boundary relation $\Down{\{0\}}\Up{\{0\}}\pjw[0]=0$.)
Formally, the algebra $\zigzag^{q}$ is generated by $\pjw[v{-}1]$ for $v\in\N$ being of the form 
$v=1(0)\dots(0)$, 
and elements $\Down{\{0\}}\pjw[v{-}1]$ and $\Up{\{0\}}\pjw[v{-}1]$, modulo the relations shown above, which are the analogs of the relations in \fullref{theorem:main-tl-section}.

Observe that we have central elements in $\zigzag^{q}$ of the form
\begin{gather*}
\loopdown{\{0\}}{v}=\Down{\{0\}}\Up{\{0\}}\pjw[v{-}1]=\Up{\{0\}}\Down{\{0\}}\pjw[v{-}1],\text{ for }v\neq 1.
\end{gather*}
These are central since they are annihilated 
by any element except their idempotent. Moreover, no other element is central, implying the second case in \eqref{eq:qtiltcenter}, using arguments as in \fullref{subsection:facts} and 
the quantum analog of \fullref{lemma:cat-center3}. 
In particular, $\loopdown{\{0\}}{v}$ corresponds 
to the natural transformation in \eqref{eq:nattrafos} 
and satisfies the relations of $X_{v}$ in 
$\C[X_{v}\mid v\in\N]\big/\langle X_{v}X_{w}\mid v,w\in\N\rangle$.
\end{proof}

\subsection{A variation: the \texorpdfstring{$G_{g}T$}{GrT} case for \texorpdfstring{$g=1,2$}{g=1,2} (generations \texorpdfstring{$1,2$}{1,2})}\label{subsection:ggt}

Recall that $\pbase{a_{j},\dots,a_{0}}{\ppar}=\sum_{i=0}^{j}a_{i}
\ppar^{i}=v$.
Using the same methods, 
in the case of projective $G_{g}T$-modules for $\SLtwo$
and $g=1,2$ one can also calculate the center 
of the corresponding additive $\K$-linear category $\gtmod{g}$. 
The corresponding Ringel duals were computed in \cite{An-tilting}, which is also our reference for statements about $\gtmod{g}$. 
Again, the resulting algebras 
are much simpler than for $\catstuff{Tilt}$.

Before we can state the theorem, let us define natural transformations $\loopdown{\{0\}}{v}$ and $\delta_{v}$ 
{\muta} as in \eqref{eq:nattrafos}. We also need
\begin{gather*}
\chi_{v}^{X}:=
\big\{
\chi_{\obstuff{P}}^{X}\colon\obstuff{P}\to\obstuff{P},
\chi_{\obstuff{P}}^{X}=\loopdown{\{0\}}{v}\loopdown{\{1\}}{v}
\mid
\obstuff{P}\in\gtmod{2}
\big\},
\\
\chi_{x}^{Y}:=
\big\{
\chi_{\obstuff{P}}^{Y}\colon\obstuff{P}\to\obstuff{P},
\chi_{\obstuff{P}}^{Y}=\loopdown{\{0\}}{x}
\mid
\obstuff{P}\in\gtmod{2}
\big\},
\end{gather*}
where both, $\loopdown{\{0\}}{v}\loopdown{\{1\}}{v}$ 
and $\loopdown{\{0\}}{x}$, are non-zero only on corresponding indecomposable projective $G_{2}T$-modules $\obstuff{P}(v{-}1)$ and $\obstuff{P}(x{-}1)$ for $v\in\Z\setminus\ppar\Z$ 
and $x\in\ppar\Z$, where they are the head-to-socle maps.
We further need an analog of \eqref{eq:tiltnattrafo}, namely
\begin{gather*}
\chi_{i}^{Z}:=
\big\{
\chi_{\obstuff{P}}^{Z}\colon\obstuff{P}\to\obstuff{P},
\chi_{\obstuff{P}}^{Z}=\loopdown{}{i}
\mid
\obstuff{P}\in\gtmod{2}
\big\},
\end{gather*}
where $\loopdown{}{i}$ is only zero
on all indecomposable summands $\obstuff{P}(j{-}1)$ 
for $j\in r(i)$ (the row of $i$, 
{\cf} \eqref{eq:p-is-5}). On such $\obstuff{P}(j{-}1)$ 
it is a map that factors through the highest 
weights of its horizontal neighbors (on the grid as in \eqref{eq:p-is-5}), which is unique up to scalars.

\begin{theorem}\label{theorem:grt}
We have the following.

\begin{enumerate}[label=(\alph*)]

\setlength\itemsep{0.15cm}

\item For $g=1$ we have $\K$-algebra isomorphisms
\begin{gather}\label{eq:g1tcenter}
\begin{gathered}
\cZ\big(\gtmod{1}\big)
\xrightarrow{\cong}
{\textstyle\bigoplus_{i=-1}^{k-2}}\,
\obstuff{X}(i),
\;
\obstuff{X}(i)
=
\begin{cases}
{\textstyle\prod_{\Z}}\,\K 
&\text{if }i=-1,
\\
\K[X_{v}\mid v\in\Z]\Big/\langle X_{v}X_{w}\mid v,w\in\Z\rangle 
& \text{otherwise},
\end{cases}
\end{gathered}
\end{gather}
where $\chi_{w}\mapsto X_{v}$, 
and $\delta_{x}\mapsto 1_{v}$ for $w,x\in\Z$ as 
explained in the proof.

\item For $g=2$ we have $\K$-algebra isomorphisms
\begin{gather}\label{eq:g2tcenter}
\begin{gathered}
\cZ\big(\gtmod{2}\big)
\xrightarrow{\cong}
{\textstyle\prod_{i=0}^{\ppar-2}}\,
\obstuff{X}(i),
\\
\obstuff{X}(i)
\cong
\begin{cases}
\cZ\big(\gtmod{1}\big) 
&\text{if }i=-1,
\\
\K[X_{v},Y_{x},Z_{i}\mid 
v,x\in\Z,
i\in\eve^{<\ppar}]\Big/
\setstuff{I}
& \text{otherwise},
\end{cases}
\end{gathered}
\end{gather}
where
\begin{gather*}
\setstuff{I}=
\Bigg\langle 
\begin{gathered}
X_{v}X_{w},X_{v}Y_{y},X_{v}Z_{j},
\\[-6pt]
Y_{x}X_{w},Y_{x}Y_{y},Y_{x}Z_{j},
\\[-6pt]
Z_{i}X_{w},Z_{i}Y_{y},Z_{i}Z_{j},
\end{gathered}
\,\Bigg\vert\,
\begin{gathered}
v,w\in\Z,
\\[-6pt]
x,y\in\Z,
\\[-6pt]
i,j\in\eve^{<\ppar}
\end{gathered}
\Bigg\rangle
,
\end{gather*}
and $\loopdown{X}{w}\mapsto X_{v}$, 
and $\loopdown{Y}{y}\mapsto Y_{x}$ for $w,y\in\Z$, 
and $\loopdown{Z}{j}\mapsto Z_{i}$ 
for $j\in\eve^{<\ppar}$ as 
explained in the proof.
\end{enumerate}

\end{theorem}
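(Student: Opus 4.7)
The plan is to follow the block-and-Ringel-dual strategy used to prove \fullref{theorem:center-algebra} and \fullref{theorem:center-category}, in a much lighter form that exploits the fact that for $g = 1, 2$ the Ringel duals of the blocks of $\gtmod{g}$, as computed in \cite{An-tilting}, are considerably simpler than $\zigzag$. First I would decompose $\gtmod{g}$ into blocks by $\ppar$-residue classes of highest weights, and then, via the analogs of \fullref{lemma:cat-center} and \fullref{lemma:cat-center3}, reduce the computation of $\cZ(\gtmod{g})$ to that of the algebra center of (the closure of) the Ringel dual algebra of each block.

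For part (a), the ``Steinberg-type'' block indexed by $i = -1$ is semisimple with simples indexed by $\Z$, giving the $\prod_\Z \K$ summand. Each of the remaining non-Steinberg blocks has Ringel dual isomorphic to a zigzag algebra on the vertex set $\Z$ with only the relations $\Down{0}\Down{0} = 0 = \Up{0}\Up{0}$ and $\Down{0}\Up{0} = \Up{0}\Down{0}$ at every vertex, and crucially no boundary vertex, in contrast to the quantum case of \fullref{theorem:qgroup}. Arguing exactly as in the proof of \fullref{theorem:qgroup}(b), I would show that the center of the closure is spanned by the unit and the maximal local loops $\loopdown{\{0\}}{v}$, and that $\loopdown{\{0\}}{v}\loopdown{\{0\}}{w} = 0$ follows either from idempotent disjointness or from $\Down{0}\Up{0}\Down{0}\pjw[v{-}1] = 0$. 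Translating back via \fullref{lemma:cat-center3} identifies these with the natural transformations $\chi_v$ of \eqref{eq:nattrafos} and yields \eqref{eq:g1tcenter}.

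For part (b), the Ringel dual of a non-exceptional block in $\gtmod{2}$ is organized as a two-dimensional grid (as in the subsequent \eqref{eq:p-is-5}) whose rows are copies of the $g = 1$ zigzag strand, glued vertically by arrows encoding the generation-$2$ structure. I would exhibit three families of central elements: the maximal local loops $\loopdown{\{0\}}{v}\loopdown{\{1\}}{v}$ at generation-$2$ vertices $v \in \Z \setminus \ppar\Z$ (yielding $X_v$); the horizontal loops $\loopdown{\{0\}}{x}$ at the generation-$1$ vertices $x \in \ppar\Z$ sitting inside the larger block (yielding $Y_x$); and the horizontally-transported sums $\loopdown{}{i}$ defined as in \fullref{definition:Lv}, running along an entire row (yielding $Z_i$). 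Centrality of the first two families is immediate from their maximality in the local endomorphism rings, while centrality of the third follows from a direct analog of \fullref{lemma:tech}, which is substantially simpler here because each row is a single zigzag strand without ramification. All products in the ideal $\setstuff{I}$ vanish as in \fullref{lemma:prodzero}, by idempotent disjointness or because each relevant loop squares to zero, and that these elements together with the unit span the center is the inductive minimality argument of \fullref{lemma:loopscentral-op}. The exceptional block of \eqref{eq:g2tcenter} that recovers $\cZ(\gtmod{1})$ is handled directly by part (a).

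The main obstacle, and essentially the only non-routine step, is the careful bookkeeping of the Ringel-dual quiver for $g = 2$ blocks: one must verify that no further mixed central elements appear beyond the three listed families, in particular that no combination of vertical generation-$2$ arrows with horizontal transport past a generation-$1$ loop survives centrally. Once the precise presentation from \cite{An-tilting} is unpacked, the centrality, nilpotency, and spanning statements all reduce to direct analogs of \fullref{lemma:tech}, \fullref{lemma:prodzero}, and \fullref{lemma:loopscentral-op}, which is precisely why the authors' preceding remark invokes \muta.
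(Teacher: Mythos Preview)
Your plan coincides with the paper's: block decomposition, reduction via \fullref{lemma:cat-center} and the analog of \fullref{lemma:cat-center3} to the center of the closure of the Ringel dual from \cite{An-tilting}, and identification of central elements as loops and loop sums. Part (a) and the families $X_v$, $Y_x$ in part (b) match the paper exactly.

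The one point of divergence is your description of the $Z_i$. In the paper's proof these are the \emph{column} sums $\loopy{c(j)}=\sum_{i\in c(j)}\loopy{i}$ of the horizontal loops $\loopy{i}=\Up{\{0\}}\Down{\{0\}}\pjw[v_i{-}1]$ over the $\ppar-1$ full columns $j\in\{1,\dots,\ppar-1\}$ of the grid \eqref{eq:p-is-5}; the $\{0\}$-arrows annihilate each $\loopy{i}$ via the row zigzag relations, while the $\{1\}$-arrows transport it along its column via the square relations. Your phrase ``horizontally-transported sums \dots running along an entire row'' reverses the two directions; row-indexed sums would be parametrized by an infinite set rather than by $\eve^{<\ppar}$, and the transport argument has to be checked separately at the corner vertices $v_i$ with $\ppar\mid i$, where the paper explicitly declares that the square relations do not hold. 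Note also that \fullref{definition:Lv} applied verbatim here gives $\Cset=\{v\}$ for a generation-two vertex (since $\Dset=\{0,1\}$ exhausts the available digits in the $G_2T$ grid) and so only recovers the local maximal loops $X_v$, not the $Z_i$; the column sums are a genuinely new feature of the two-parameter grid with no direct counterpart in \fullref{section:center-alg}. With this orientation corrected, the remainder of your sketch---nilpotency and spanning via analogs of \fullref{lemma:prodzero} and \fullref{lemma:loopscentral-op}---is exactly the paper's argument.
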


\begin{proof}
\noindent(a). Very similar as in the quantum case we 
have a decomposition of additive $\K$-linear categories
\begin{gather*}
\gtmod{1}\cong
\gtmod{1}_{St}
\oplus
\gtmod{1}_{0}
\oplus
\dots
\oplus
\gtmod{1}_{\ppar{-}2},
\\
\gtmod{1}_{0}
\cong
\dots
\cong
\gtmod{1}_{\ppar{-}2}
\cong
\prmod{\zigzag^{G_{1}T}},
\end{gather*}
where $\gtmod{2}_{St}$ is semisimple with 
simple objects being projective $G_{1}T$-modules 
indexed as in the first case in \eqref{eq:g1tcenter}.
The Ringel dual quiver algebra $\zigzag^{G_{1}T}$ of $\gtmod{1}_{0}$
in this case is a zigzag algebra on the vertex set $\Z$, {\ie}
\begin{gather*}
\begin{tikzcd}[ampersand replacement=\&,column sep=3em]
\dots
\ar[r,"\Up{\{0\}}",black,yshift=0.1cm]
\&
(v_{-1}-1)
\ar[r,"\Up{\{0\}}",black,yshift=0.1cm]
\ar[l,"\Down{\{0\}}",orchid,yshift=-0.1cm]
\& 
(v_{0}-1)
\ar[r,"\Up{\{0\}}",black,yshift=0.1cm]
\ar[l,"\Down{\{0\}}",orchid,yshift=-0.1cm]
\&
(v_{1}-1)
\ar[r,"\Up{\{0\}}",black,yshift=0.1cm]
\ar[l,"\Down{\{0\}}",orchid,yshift=-0.1cm]
\&
(v_{2}-1)
\ar[r,"\Up{\{0\}}",black,yshift=0.1cm]
\ar[l,"\Down{\{0\}}",orchid,yshift=-0.1cm]
\&
\dots
\ar[l,"\Down{\{0\}}",orchid,yshift=-0.1cm]
\end{tikzcd}
,
\end{gather*} 
where $v_{0}=1$ and $v_{i+1}=v_{i}(0)$ for $i\in\N[0]$, $v_{-i}=-v_{i}+2$ for $i$
even, and $v_{-i}=-v_{i}+2\ppar-2$ for $i$ odd. These are the numbers with
$\ppar$-adic expansion $\pbase{n_1,a_0}{\ppar}$ where $a_{0}\in
\{0,\dots,\ppar-1\}$ and $n_1\in \Z$, which can be reached from $1$ by
successive upward or downward reflection in the zeroth digit. The generators
$\Down{\{0\}}$ and $\Up{\{0\}}$ are subject to the relations
\begin{gather*}
\Down{\{0\}}\Down{\{0\}}\pjw[v{-}1]=0,
\Up{\{0\}}\Up{\{0\}}\pjw[v{-}1]=0,
\;
\Down{\{0\}}\Up{\{0\}}\pjw[v{-}1]
=\Up{\{0\}}\Down{\{0\}}\pjw[v{-}1]
.
\end{gather*}
Formally, the algebra $\zigzag^{G_{1}T}$ is generated by $\pjw[v{-}1]$ for $v\in\Z$ being of the form 
$v=\pm\big(1(0)\dots(0)\big)$, 
and elements $\Down{\{0\}}\pjw[v{-}1]$ and $\Up{\{0\}}\pjw[v{-}1]$. The relations are the ones above
together with the usual idempotent relations. Thus, the situation is analogous to the quantum case.
In particular, the same arguments, {\muta}, as for the quantum group give the second case in \eqref{eq:g1tcenter}.

\noindent(c). We start by recalling that
\begin{gather*}
\gtmod{2}\cong
\gtmod{2}_{St}
\oplus
\gtmod{2}_{0}
\oplus
\dots
\oplus
\gtmod{2}_{\ppar{-}2},
\\
\gtmod{2}_{St}
\cong
\gtmod{1},
\quad
\gtmod{2}_{0}
\cong
\dots
\cong
\gtmod{2}_{\ppar{-}2}
\cong
\prmod{\zigzag^{G_{2}T}},
\end{gather*}
where $\gtmod{2}_{St}$ is no longer semisimple, but rather equivalent to
$\gtmod{1}$. This gives us the first summand in \eqref{eq:g2tcenter}. The Ringel
dual quiver algebra $\zigzag^{G_{2}T}$ still has vertex set $\Z$, but arranged
on a grid, {\eg} if $\ppar=5$, then, as explained in \cite[Section
6.3]{An-tilting}:
\begin{gather}\label{eq:p-is-5}
\begin{tikzcd}[ampersand replacement=\&,column sep=3.75em,row sep=3.75em]
\& \vdots\ar[d,"\Down{\{1\}}",swap,xshift=-0.1cm,tomato] \& \vdots\ar[d,"\Down{\{1\}}",swap,xshift=-0.1cm,tomato] \& \vdots\ar[d,"\Down{\{1\}}",swap,xshift=-0.1cm,tomato] \& \vdots\ar[d,"\Down{\{1\}}",swap,xshift=-0.1cm,tomato] \&
\\
\& w_{9}\ar[r,"\Down{\{0\}}",swap,yshift=-0.1cm,orchid]\ar[d,"\Down{\{1\}}",swap,xshift=-0.1cm,tomato]\ar[u,"\Up{\{1\}}",swap,xshift=0.1cm,spinach] \& w_{8}\ar[l,"\Up{\{0\}}",swap,yshift=0.1cm,black]\ar[r,"\Down{\{0\}}",swap,yshift=-0.1cm,orchid]\ar[d,"\Down{\{1\}}",swap,xshift=-0.1cm,tomato]\ar[u,"\Up{\{1\}}",swap,xshift=0.1cm,spinach] \& w_{7}\ar[l,"\Up{\{0\}}",swap,yshift=0.1cm,black]\ar[r,"\Down{\{0\}}",swap,yshift=-0.1cm,orchid]\ar[d,"\Down{\{1\}}",swap,xshift=-0.1cm,tomato]\ar[u,"\Up{\{1\}}",swap,xshift=0.1cm,spinach] \& w_{6}\ar[l,"\Up{\{0\}}",swap,yshift=0.1cm,black]\ar[r,"\Down{\{0\}}",swap,yshift=-0.1cm,orchid]\ar[d,"\Down{\{1\}}",swap,xshift=-0.1cm,tomato]\ar[u,"\Up{\{1\}}",swap,xshift=0.1cm,spinach] \& w_{5}\ar[l,"\Up{\{0\}}",swap,yshift=0.1cm,black]
\\
w_{0}\ar[r,"\Up{\{0\}}",yshift=0.1cm,black] \& w_{1}\ar[r,"\Up{\{0\}}",yshift=0.1cm,black]\ar[l,"\Down{\{0\}}",yshift=-0.1cm,orchid]\ar[d,"\Down{\{1\}}",swap,xshift=-0.1cm,tomato]\ar[u,"\Up{\{1\}}",swap,xshift=0.1cm,spinach] \& w_{2}\ar[r,"\Up{\{0\}}",yshift=0.1cm,black]\ar[l,"\Down{\{0\}}",yshift=-0.1cm,orchid]\ar[d,"\Down{\{1\}}",swap,xshift=-0.1cm,tomato]\ar[u,"\Up{\{1\}}",swap,xshift=0.1cm,spinach] \& w_{3}\ar[r,"\Up{\{0\}}",yshift=0.1cm,black]\ar[l,"\Down{\{0\}}",yshift=-0.1cm,orchid]\ar[d,"\Down{\{1\}}",swap,xshift=-0.1cm,tomato]\ar[u,"\Up{\{1\}}",swap,xshift=0.1cm,spinach] \& w_{4}\ar[l,"\Down{\{0\}}",yshift=-0.1cm,orchid]\ar[d,"\Down{\{1\}}",swap,xshift=-0.1cm,tomato]\ar[u,"\Up{\{1\}}",swap,xshift=0.1cm,spinach] \&
\\
\& w_{-1}\ar[r,"\Down{\{0\}}",swap,yshift=-0.1cm,orchid]\ar[d,"\Down{\{1\}}",swap,xshift=-0.1cm,tomato]\ar[u,"\Up{\{1\}}",swap,xshift=0.1cm,spinach] \& w_{-2}\ar[l,"\Up{\{0\}}",swap,yshift=0.1cm,black]\ar[r,"\Down{\{0\}}",swap,yshift=-0.1cm,orchid]\ar[d,"\Down{\{1\}}",swap,xshift=-0.1cm,tomato]\ar[u,"\Up{\{1\}}",swap,xshift=0.1cm,spinach] \& w_{-3}\ar[l,"\Up{\{0\}}",swap,yshift=0.1cm,black]\ar[r,"\Down{\{0\}}",swap,yshift=-0.1cm,orchid]\ar[d,"\Down{\{1\}}",swap,xshift=-0.1cm,tomato]\ar[u,"\Up{\{1\}}",swap,xshift=0.1cm,spinach] \& w_{-4}\ar[l,"\Up{\{0\}}",swap,yshift=0.1cm,black]\ar[r,"\Down{\{0\}}",swap,yshift=-0.1cm,orchid]\ar[d,"\Down{\{1\}}",swap,xshift=-0.1cm,tomato]\ar[u,"\Up{\{1\}}",swap,xshift=0.1cm,spinach] \& w_{-5}\ar[l,"\Up{\{0\}}",swap,yshift=0.1cm,black]
\\
w_{-10}\ar[r,"\Up{\{0\}}",yshift=0.1cm,black] \& w_{-9}\ar[r,"\Up{\{0\}}",yshift=0.1cm,black]\ar[l,"\Down{\{0\}}",yshift=-0.1cm,orchid]\ar[d,"\Down{\{1\}}",swap,xshift=-0.1cm,tomato]\ar[u,"\Up{\{1\}}",swap,xshift=0.1cm,spinach] \& w_{-8}\ar[r,"\Up{\{0\}}",yshift=0.1cm,black]\ar[l,"\Down{\{0\}}",yshift=-0.1cm,orchid]\ar[d,"\Down{\{1\}}",swap,xshift=-0.1cm,tomato]\ar[u,"\Up{\{1\}}",swap,xshift=0.1cm,spinach] \& w_{-7}\ar[r,"\Up{\{0\}}",yshift=0.1cm,black]\ar[l,"\Down{\{0\}}",yshift=-0.1cm,orchid]\ar[d,"\Down{\{1\}}",swap,xshift=-0.1cm,tomato]\ar[u,"\Up{\{1\}}",swap,xshift=0.1cm,spinach] \& w_{-6}\ar[l,"\Down{\{0\}}",yshift=-0.1cm,orchid]\ar[d,"\Down{\{1\}}",swap,xshift=-0.1cm,tomato]\ar[u,"\Up{\{1\}}",swap,xshift=0.1cm,spinach] \&
\\
\& \vdots\ar[u,"\Up{\{1\}}",swap,xshift=0.1cm,spinach] \& \vdots\ar[u,"\Up{\{1\}}",swap,xshift=0.1cm,spinach] \& \vdots\ar[u,"\Up{\{1\}}",swap,xshift=0.1cm,spinach] \& \vdots\ar[u,"\Up{\{1\}}",swap,xshift=0.1cm,spinach] \&
\end{tikzcd}
.
\end{gather}
The indexing of the positively labeled vertices $w_i=v_i-1$ hereby works as
follows. $v_{0}=\pbase{1}{5}(=1)$, and each horizontal step is $(0)$, {\eg}
$v_{1}=\pbase{1}{5}(0)=\pbase{2,-1}{5}=\pbase{1,4}{5}(=9)$, while each
vertical step is $(1)$, {\eg}
$v_{9}=\pbase{1,4}{5}(1)=\pbase{2,-1,4}{5}(=49)$. Moreover, $w_{-i}=-w_{i}$, if
$i\geq 0$ is even, $w_{-i}=-w_{i}+2\ppar-4$, if $i>0$ is odd. The $v_i$ are
exactly the integers with $\ppar$-adic expansion $\pbase{n_2,a_1,a_0}{\ppar}$
where $a_0,a_1\in \{0,\dots, \ppar-1\}$ and $n_2\in \Z$, which can be reached
from $1$ by reflection in the first two digits.

Precisely, the algebra $\zigzag^{G_{2}T}$ is generated by $\pjw[v{-}1]$ for
$v\in\Z$ being as above, and elements $\Down{\{0\}}\pjw[v{-}1]$ and
$\Up{\{0\}}\pjw[v{-}1]$ as well as $\Down{\{1\}}\pjw[v{-}1]$ and
$\Up{\{1\}}\pjw[v{-}1]$. The relations are such that each column and each row is
a zigzag algebra, and all squares commute, {\ie}:

\begin{enumerate}[label=(\arabic*)]

\setlength\itemsep{0.15cm}

\item Each row is a zigzag algebra, {\cf} \fullref{theorem:main-tl-section}.(2) and (6), {\ie}
\begin{gather*}
\Up{\{0\}}\Up{\{0\}}\pjw[v{-}1]=
0=\Down{\{0\}}\Down{\{0\}}\pjw[v{-}1],
\quad
\Down{\{0\}}\Up{\{0\}}\pjw[v{-}1]
=\Up{\{0\}}\Down{\{0\}}\pjw[v{-}1]
.
\end{gather*}
(No boundary condition.)

\item Each column is a zigzag algebra, {\cf} \fullref{theorem:main-tl-section}.(2) and (6), {\ie}
\begin{gather*}
\Up{\{1\}}\Up{\{1\}}\pjw[v{-}1]=
0=\Down{\{1\}}\Down{\{1\}}\pjw[v{-}1],
\quad
\Down{\{1\}}\Up{\{1\}}\pjw[v{-}1]
=\Up{\{1\}}\Down{\{1\}}\pjw[v{-}1]
.
\end{gather*}

\item All squares commute, {\cf} \fullref{theorem:main-tl-section}.(4), {\ie}
\begin{gather*}
\Down{\{1\}}\Up{\{0\}}\pjw[v{-}1]
=
\Down{\{0\}}\Down{\{1\}}\pjw[v{-}1],
\quad
\Down{\{0\}}\Up{\{1\}}\pjw[v{-}1]
=
\Up{\{1\}}\Up{\{0\}}\pjw[v{-}1],
\\
\Down{\{1\}}\Down{\{0\}}\pjw[v{-}1]
=
\Down{\{0\}}\Up{\{1\}}\pjw[v{-}1],
\quad
\Down{\{0\}}\Down{\{1\}}\pjw[v{-}1]
=
\Down{\{1\}}\Up{\{0\}}\pjw[v{-}1]
.
\end{gather*}
These hold only for complete squares, {\ie} 
\begin{gather*}
\begin{tikzcd}[ampersand replacement=\&,column sep=3em,row sep=3em]  
\&
w_{9}
\\
w_{0}
\ar[r,"\Up{\{0\}}",yshift=0.1cm,black]
\&
w_{1}
\ar[u,"\Up{\{1\}}",swap,xshift=0.1cm,spinach]
\end{tikzcd}
,
\end{gather*}
as in \eqref{eq:p-is-5}, does not satisfy any relation, and is in particular, not zero.

\end{enumerate}

Hence, we get row and column loops
\begin{gather*}
\loopy{i}:=\loopy{\{0\}}\pjw[v_{i}{-}1]=\Down{\{0\}}\Up{\{0\}}\pjw[v_{i}{-}1]=\Up{\{0\}}\Down{\{0\}}\pjw[v_{i}{-}1],
\\
\loopyy{i}:=\loopy{\{1\}}\pjw[v_{i}{-}1]=\Down{\{1\}}\Up{\{1\}}\pjw[v_{i}{-}1]=\Up{\{1\}}\Down{\{1\}}\pjw[v_{i}{-}1].
\end{gather*}
(Here, as in \eqref{eq:p-is-5}, only one expression for 
$\loopy{\{0\}}\pjw[v_{i}{-}1]$ makes sense if $\ppar|i$. The above is just a shorthand notation.)
The relations imply that
\begin{gather}\label{eq:some-relations}
\pjw[v_{i}{-}1]
\zigzag^{G_{2}T}
\pjw[v_{i}{-}1]
\cong
\begin{cases}
\K[\loopy{i}]\Big/\langle\loopy{i}^{2}\rangle
&\text{if }\ppar|i,
\\
\K[\loopy{i},\loopyy{i}]\Big/\langle\loopy{i}^{2},(\loopyy{i})^{2}\rangle
&\text{otherwise}.
\end{cases}
\end{gather}
Note that these have bases $\{\pjw[v_{i}{-}1],\loopy{i}\}$ and 
$\{\pjw[v_{i}{-}1],\loopy{i},\loopyy{i},\loopy{i}\loopyy{i}\}$,respectively.
Also, the loops $\loopy{i}$, for $\ppar|i$,
and $\loopy{i}\loopyy{i}$, otherwise, are central since 
only the idempotent $\pjw[v_{i}{-}1]$ does not annihilate them.

But there are more central elements.
To define them, let $c(j)$ denote the set of indexes 
of the $j$th column, read left to right. 
For example, $c(2)=\{\dots,-8,-2,2,8,\dots\}$ for $\ppar=5$, {\cf} \eqref{eq:p-is-5}.
Then we sum row loops over their column $r(j)$, {\ie} 
\begin{gather*}
\loopy{c(j)}={\textstyle\sum_{i\in c(j)}}\,\loopy{i}\in
\zigzagc^{G_{2}T}.
\end{gather*}
We have $\loopy{c(j)}\in\aZ(\zigzagc^{G_{2}T})$: That $\loopyy{c(j)}$ commutes with all idempotents is clear. 
Moreover, each vertical arrow annihilates $\loopyy{c(j)}$ from both sides 
and each horizontal arrow transports a loop $\loopy{i}$ to its neighbor, illustrated as
\begin{gather*}
\begin{tikzcd}[ampersand replacement=\&,column sep=3em,row sep=3em]  
i \& j
\\
k\ar[r,yshift=0.1cm,black]\ar[u,xshift=0.1cm,spinach] \& l\ar[l,yshift=-0.1cm,orchid]
\end{tikzcd}
=
\begin{tikzcd}[ampersand replacement=\&,column sep=3em,row sep=3em]  
i \& j\ar[l,yshift=-0.1cm,orchid]
\\
k\ar[r,yshift=0.1cm,black] \& l\ar[u,xshift=0.1cm,spinach]
\end{tikzcd}
=
\begin{tikzcd}[ampersand replacement=\&,column sep=3em,row sep=3em]  
i\ar[r,yshift=0.1cm,black] \& j\ar[l,yshift=-0.1cm,orchid]
\\
k\ar[u,xshift=0.1cm,spinach] \& l
\end{tikzcd}
.
\end{gather*} 
Furthermore, a bit more thought (using arguments as in \fullref{section:center-alg}) proves that the central elements which we have identified, {\ie}
\begin{gather*}
\loopy{i},\text{ for }\ppar|i,
\quad
\loopy{i}\loopyy{i},\text{ otherwise},
\quad
\loopyy{r(j)},\text{ for }j\in\{1,\dots,\ppar-1\},
\end{gather*}
form a basis of $\aZ(\zigzagc^{G_{2}T})$, and it remains to calculate the relations among these. 
We already know the relations among the elements $\loopy{i}$ and $\loopy{i}\loopyy{i}$, see \eqref{eq:some-relations}. Further, we have
\begin{gather*}
\loopy{c(j)}\loopy{c(k)}=0,
\quad
\loopy{c(j)}\loopy{i}=0=\loopy{i}\loopy{c(j)},\text{ for }\ppar|i,
\quad
\loopy{c(j)}\loopy{i}\loopyy{i}=0=\loopy{i}\loopyy{i}\loopy{c(j)}
.
\end{gather*}
This concludes the proof.
\end{proof}

\end{document}